\definecolor{darkgreen}{rgb}{0,0.5,0}
\definecolor{darkred}{rgb}{0.7,0,0}
\theoremstyle{plain}
\numberwithin{equation}{section}
\newcommand{\h}{\ensuremath{{\mathcal H}}}
\newcommand{\s}{\ensuremath{{\mathcal S}}}
\newcommand{\ca}{\ensuremath{{\mathcal A}}}
\newcommand{\cd}{\ensuremath{{\mathcal D}}}
\newcommand{\ci}{\ensuremath{{\mathcal I}}}
\newcommand{\cj}{\ensuremath{{\mathcal J}}}
\newcommand{\cv}{\ensuremath{{\mathcal V}}}
\newcommand{\zb}{\bar{z}}
\newcommand{\pl}[2]{{\frac{\partial #1}{\partial #2}}}
\newcommand{\ti}{\tilde}
\newcommand{\al}{\alpha}
\newcommand{\be}{\beta}
\newcommand{\ga}{\gamma}
\newcommand{\Ga}{\Gamma}
\newcommand{\de}{\delta}
\newcommand{\Om}{\Omega}
\newcommand{\ka}{\kappa}
\newcommand{\si}{\sigma}
\newcommand{\Si}{\Sigma}
\newcommand{\Tau}{{\mathcal T}}  
\renewcommand{\th}{\theta}
\newcommand{\vph}{\varphi}
\newcommand{\ep}{\varepsilon}
\newcommand{\R}{\ensuremath{{\mathbb R}}}
\newcommand{\N}{\ensuremath{{\mathbb N}}}
\newcommand{\C}{\ensuremath{{\mathbb C}}}
\newcommand{\downto}{\downarrow}
\newcommand{\upto}{\uparrow}
\newcommand{\embed}{\hookrightarrow}
\newcommand{\lap}{\Delta}
\newcommand{\intersect}{\cap}
\newcommand{\beq}{\begin{equation}}
\newcommand{\beql}[1]{\begin{equation}\label{#1}}
\newcommand{\eeq}{\end{equation}}
\newcommand{\beqa}{\begin{equation}\begin{aligned}}
\newcommand{\eeqa}{\end{aligned}\end{equation}}
\newcommand{\brmk}{\begin{rmk}}
\newcommand{\ermk}{\end{rmk}}
\newcommand{\partref}[1]{\hbox{(\csname @roman\endcsname{\ref{#1}})}}
\newcommand{\half}{\frac{1}{2}}
 \newtheorem{thm}{Theorem}[section]
\newtheorem{cor}[thm]{Corollary}
\newtheorem{lem}[thm]{Lemma}
\newtheorem{defn}[thm]{Definition}
\newtheorem{rmk}[thm]{Remark}
\newtheorem{claim}[thm]{Claim}
\title{\sc the harnack inequality without convexity for curve shortening flow}
\author{Arjun Sobnack and Peter M. Topping}
\date{20 January 2026}
\pgfplotsset{compat=1.18}
\begin{document}

%

\parskip 8pt
\parindent 0pt

\maketitle

\begin{abstract}
In 1995, Hamilton introduced a Harnack inequality for convex solutions
of the mean curvature flow. 
In this paper we prove an alternative Harnack inequality for curve shortening flow, 
i.e.~one-dimensional mean curvature flow, that does not require
any assumption of convexity. For an  initial proper curve in the plane 
whose ends are radial lines but which is otherwise arbitrarily wild, we use the Harnack inequality to give an explicit time by which the curve shortening flow evolution must become graphical. This gives a new instance of delayed parabolic regularity. 
The Harnack inequality also gives estimates describing how a polar graphical flow with radial ends settles down to an expanding solution.
Finally, we relate our Harnack inequality to Hamilton's by identifying 
a pointwise curvature estimate implied by both Harnack inequalities in the 
special case of convex flows.
\end{abstract}

\section{Introduction}
\label{intro}

Given an open interval $\ci\subset\R$, e.g.~$\ci=\R$, 
a smooth map $\ga:\ci\times(0,T)\to\R^2$, parametrised as $\ga(u,t)$ and 
defining a one-parameter family of embeddings $t\mapsto \ga(\cdot,t)$,
is said to satisfy curve shortening flow if
\beql{non_normal_CSF}
 \Big\langle \pl{\ga}{t}, \nu \Big\rangle = - \ka,  
\eeq
where $\nu$ is a unit normal to $\ga(\cdot,t)$ and $\ka$ is the corresponding geodesic curvature so that the geodesic curvature vector is $\vec{\ka}=-\ka \nu$.

If at some time $t$ the parameter $u$ is arc-length, we
have tangent vector $\Tau:=\ga_u$. 
The (continuous) tangent angle $\psi$ is determined up to an integer  multiple of $2\pi$  by the condition 
\beql{Tau_def}
\Tau=(\cos\psi, \sin\psi).
\eeq
We adopt the sign convention that $\ka:=\psi_u$ (still for arc-length $u$) which, by differentiating \eqref{Tau_def}, makes 
$$\vec{\ka}=\ga_{uu}=\Tau_u=\ka (*\Tau),$$
where the Hodge star operator $*$ gives an anticlockwise rotation to vectors,
so $\nu=-{*\Tau}$ or equivalently $\Tau=*\nu$.

In the case of non-trivial solutions
that are convex, which we may take to mean that $\ka(\cdot, t) > 0$ for all $t \in (0, T)$,
and that are parametrised so that $\pl{\ga}{t}=-\ka\nu$, Hamilton's Harnack inequality \cite{Ham_MCF_Harnack}, restricted to the case of evolving curves, tells us that 
\beql{ham_harn_1d}
\ka_t+\frac{\ka}{2t}\geq \frac{\ka_u^2}{\ka},
\eeq
still with arc-length $u$,
under reasonable conditions on the ends of the curves $\ga(\cdot,t)$.
Hamilton noticed that if one throws away the non-negative right-hand side of \eqref{ham_harn_1d} then this implies that $(\ka t^\half)_t\geq 0$, which can be integrated in time to give pointwise curvature estimates. 

In fact, a stronger inequality of this form arises by instead parametrising the flow by the tangent angle $\psi$ rather than $u$. As we show in Section  \ref{Harnack_relating_sect}, Hamilton's Harnack inequality is then \textit{equivalent} to the statement $(\ka t^\half)_t\geq 0$, where the time derivative is now with $\psi$ fixed rather than $u$ fixed.
This fact is implicit in the work of Andrews \cite[Theorem 5.6]{andrews_mathz}.
Everything we have said so far generalises to convex solutions of mean curvature flow.

In this paper we develop an alternative Harnack inequality for curve shortening flow, without any convexity hypothesis.
This was inspired by developments in \cite{ST1}, which in turn evolved from the theory of Ricci flow 
\cite{TY1} and K\"ahler Ricci flow (e.g.~\cite{DN_L}). 
In doing so we make connection with earlier work of
Neves in Lagrangian mean curvature flow \cite{neves}, cf.~Remark \ref{neves_rmk}.
This alternative Harnack inequality is related to Hamilton's by implying the same pointwise curvature estimate in the case of convex flows. However, our main application is to understand how rapidly an evolving curve straightens out. For example, we give a precise time by which a curve such as that in Figure \ref{fern_fig} becomes graphical.

\begin{figure} 
\centering
\includegraphics[width=\textwidth]{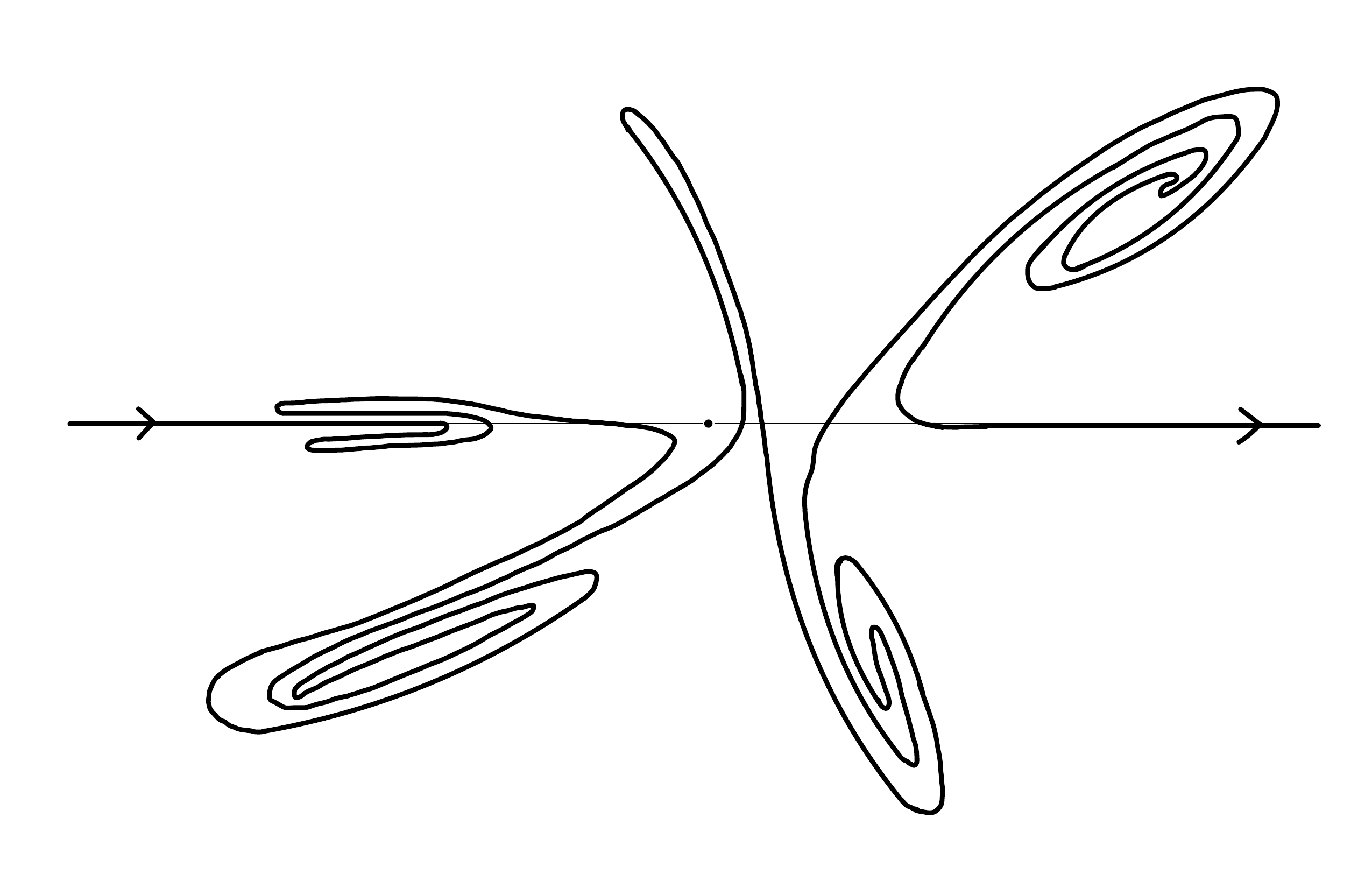}
\caption{How long before the evolution of this curve becomes graphical? Theorem \ref{graphical_cor} gives a precise time.}
\label{fern_fig}
\end{figure}

Because we are considering curve shortening flow on the noncompact domain $\ci$ rather than $S^1$, we have to be very careful with the notion of solution we take in order to avoid pathological examples of nonuniqueness and to have access to basic properties such as the avoidance principle. Certainly we ask that each curve $\ga(\cdot,t)$ is proper, so both ends of the curve end up at spatial infinity, but this is not enough. As explained by Peachey \cite{peachey_thesis}, we additionally need that the space-time map $\ga$ is proper in the following precise sense.

\begin{defn}[Uniformly proper CSF]
Let $\ci\subset \R$ be an open (parametrisation) interval.
Let $\cj\subset \R$ be a time interval of the form $[0,T)$ or $(0,T)$, where 
$T\in (0,\infty]$.
A  map $\ga:\ci\times \cj\to\R^2$ is said to be a \emph{uniformly proper} solution to curve shortening flow if 
\begin{compactenum}
\item
$\ga$ is continuous,
\item
$\ga$ is smooth on $\ci\times (0,T)$,
\item
$\ga$ is proper on $\ci\times [a,b]$ for any $[a,b]\subset \cj$, 
\item
for each $t\in \cj$ the curve $\ga(\cdot,t)$ is embedded, and
\item
$\ga$ is a solution to the curve shortening flow equation \eqref{non_normal_CSF} 
for $t\in (0,T)$.
\end{compactenum}
\end{defn}

For much of this paper we consider solutions with ends that are initially radial lines.
For $a\in\R$, we denote by $L_a$ the half-line from the origin outwards in a direction $a$. More precisely, identifying $\C\simeq\R^2$, we define
$$L_a:=\{re^{ia}\ :\ r\geq 0\}.$$

\begin{defn}
\label{CSF_IRE_def_new}
A curve shortening flow with \emph{initially radial ends at angles $a$ and $b$} is defined to be a 
uniformly proper solution $\ga:\ci\times [0,T)\to\R^2$ ($\ci\subset\R$ an open interval) to curve shortening flow
with the property that there exist $u_1,u_2\in \ci$ with $u_1<u_2$ so that 
$\ga(u,0)$ lies within $L_a$ for $u\in\ci\intersect (-\infty,u_1)$, while
$\ga(u,0)$ lies within  $L_b$ for $u\in\ci\intersect (u_2,\infty)$.
\end{defn}

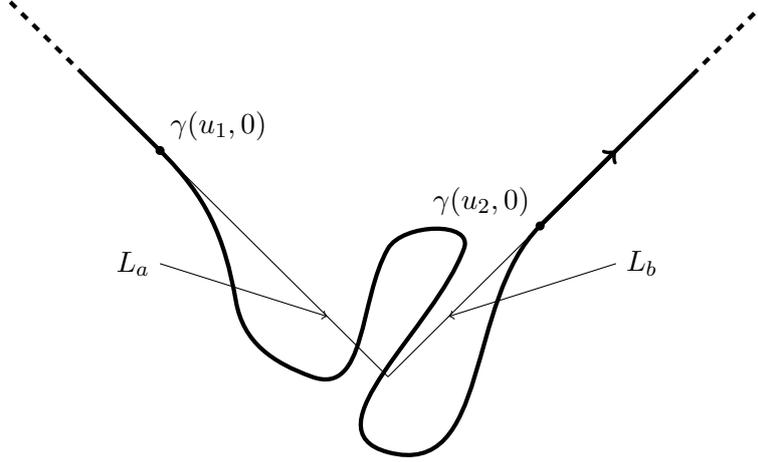
\begin{figure}
\centering
\begin{tikzpicture}[scale=1]

\draw[ultra thick, -] (-3,3) to[out=315,in=100] (-2,1) to[out=180+100, in=160] (-1, 0) to[out=160+180, in=240] (0,1.7) to[out=240+180, in=70] (1,1.7) to[out=180+70, in=165] (0,-1) to[out=165+180,in=225] (2,2) ;

\draw[-] (-3,3) node[above right]{$\ga(u_1,0)$} -- (0,0);
\filldraw (-3,3) circle (1.5pt);

\draw[->, ultra thick] (2,2) -- (3,3);
\draw[-, ultra thick] (2,2) -- (4,4);
\draw[-, dashed, ultra thick] (-4,4) -- (-5,5);

\draw[-] (0,0) -- (2,2) node[above left]{$\ga(u_2,0)$};
\filldraw (2,2) circle (1.5pt);

\draw[-, ultra thick] (-3,3) -- (-4,4);
\draw[-, dashed, ultra thick] (4,4) -- (5,5);



\draw[->] (3,1.5) node[right]{$L_b$} -- (0.8,0.8);

\draw[->] (-3,1.5) node[left]{$L_a$} -- (-0.8,0.8);

\end{tikzpicture}
\caption{Image of the initial data $\ga(\cdot,0)$ of a curve shortening flow $\ga$ with initially radial ends at angles 
$a=\frac{3\pi}{4}$ and $b=\frac{\pi}{4}$.}
\label{CSF_IRE}
\end{figure}

The definition is illustrated in Figure \ref{CSF_IRE}.

Owing to the work of Polden \cite{Polden}, Huisken \cite[Theorem 2.5]{huisken_AJM} and Chou-Zhu \cite{CZ},  given any proper  locally Lipschitz embedded curve $\ga_0:\ci\to\R^2$ with radial ends in the sense as described in Definition \ref{CSF_IRE_def_new} for $\ga(\cdot,0)$, there exists a unique 
uniformly proper solution $\ga:\ci\times [0,\infty)\to\R^2$ for which 
$\ga(\cdot,0)=\ga_0$. 
It is implicit in the literature that the flow continues to be asymptotic to the lines $L_a$ and $L_b$ at later times. 
In particular it continues to be possible to write the ends of this solution as graphs over ends of the lines $L_a$ and $L_b$, at least over compact time intervals;
see \cite[Lemma 2.5.1]{arjun_thesis}. Later we will prove refined estimates concerning the exponential convergence of the evolving curves to $L_a$ and $L_b$ as we move down the ends at later times (see Appendix \ref{app:expdec}). 

To state the alternative Harnack inequality, and its applications, we need the following notion of signed area,
which we illustrate in Figure \ref{swept_area_fig}.
\begin{defn}
\label{swept_area_def}
Given a smooth immersed curve $\ga:\ci\to\R^2$, with $\ci\subset \R$ an open interval, we define the 
\emph{swept area} $\ca(v,w)$, for $v,w\in\ci$ with $v\leq w$, to be 
\beql{swept_area}
\ca(v,w):=\half\int_{v}^{w} *(d\ga\wedge \ga)
=\half\int_{v}^{w} *(\ga_u\wedge \ga) \, du,
\eeq
where $*$ is the Hodge star operator on 2-vectors in $\R^2$, i.e.
$*\colon\Lambda^2\R^2 \to\R$ is determined by $*(e_1\wedge e_2)=1$.
\end{defn}

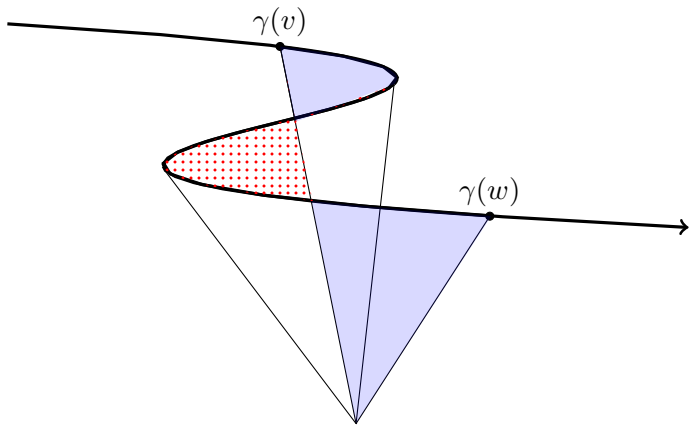
\begin{figure}
\centering
\begin{tikzpicture}[scale=1]

\draw[very thick, samples=20,domain=-1.3:1.4, ->] 
plot ({4*\x*(\x-1)*(\x+1)},-\x);

\draw[very thick, samples=20,domain=-1:-0.5, name path=f] 
plot ({4*\x*(\x-1)*(\x+1)},-\x);

\draw[very thick, samples=20,domain=1.048:1.248, name path=h] 
plot ({4*\x*(\x-1)*(\x+1)},-\x);

\draw[very thick, samples=20,domain=-0.5:1.05, name path=minus] 
plot ({4*\x*(\x-1)*(\x+1)},-\x);

\draw[-] (0,1) node[above]{$\ga(v)$} -- (1,-4);

\filldraw (0,1) circle (1.5pt);

\draw[-, name path=g] (0,1) -- (0.2,0);

\draw[-] (1.5,0.5) -- (1,-4);

\draw[-] (-1.55,-0.6) -- (1,-4);

\draw[-] (2.76,-1.25) node[above]{$\ga(w)$}-- (1,-4);

\filldraw (2.76,-1.25) circle (1.5pt);

\draw[-, name path=origin] (1,-4) -- (1,-4);

\tikzfillbetween[of=f and g]{blue, opacity=0.15}

\tikzfillbetween[of=h and origin]{blue, opacity=0.15} 

\tikzfillbetween[of=minus and g]{red, opacity=1, pattern=dots, pattern color=red}

\end{tikzpicture}
\caption{The swept area $\ca(v,w)$ is the blue shaded area minus the red dotted area.}
\label{swept_area_fig}
\end{figure}

The swept area is invariant under rotations of $\ga$ about the origin, and negates under reflections across lines through the origin.
Suitably interpreted, it is also invariant under orientation-preserving reparametrisations of $\ga$, and negates under orientation-reversing reparametrisations. 

In the case that $u$ is an arc-length parameter, we can write
\beql{simpler_A_exp}
\ca(v,w)=-\half\int_{v}^{w} \langle \ga,\nu \rangle \, du.
\eeq
If we view the curve as lying within the complex plane $\C\simeq \R^2$, then we can alternatively write  
\beql{complex_A_exp}
\ca(v,w)=\Im\left[-\frac{1}{2}\int \zb\, dz\right],
\eeq
where the integral is performed on the part of $\ga$ parametrised between $v$ and $w$, and $\Im$ represents the imaginary part.

\begin{rmk}
\label{ca_for_closed_curve}
%
If we are given an immersed closed (piecewise) smooth curve $\ga:S^1\to\R^2$, 
then the total swept area can be written
$$\ca=\half\int_\ga *(\ga_u\wedge \ga) \, du=-\frac{1}{2i}\int_\ga \zb\, dz,$$
because the real part of the integral in \eqref{complex_A_exp} vanishes.
If additionally $\ga$ is an embedding that traverses its image in an  anti-clockwise direction (i.e.~it has winding number $0$ or $+1$ around each point), then the total swept area will be the \textbf{negation} of the area of the region enclosed by $\ga$.
The choice of sign convention for the swept area is fraught because we are using it to relate two different topics in which the standard sign conventions clash.
\end{rmk}
We write the swept area of an evolving curve $\ga(\cdot,t)$ as $\ca(v,w,t)$.

Before giving the Harnack inequality, we give one of the applications.
Given a curve shortening flow with initially radial ends, but which can be arbitrarily wild otherwise, as in Figure \ref{fern_fig}, we obtain an explicit sharp time after which we can be sure that the flow can be written as a graph globally, with an explicit gradient bound. From that moment on, parabolic regularity theory is switched on and we obtain $C^k$ estimates 
for the flow in terms of the swept area at time zero and the time for which we have flowed.

\begin{thm}
\label{graphical_cor}
Let  $\ga:\ci\times [0,T)\to \R^2$, with $\ci\subset \R$ an open interval, be a curve shortening flow with initially radial ends at angles $\frac\pi2 + \frac\be2$ and $\frac\pi2- \frac\be2$ in the sense of Definition \ref{CSF_IRE_def_new}, where $\be\in (0,\pi]$,
which is smooth down to $t=0$.
Suppose that the swept area of the initial curve has bounds 
\beql{ca_bds}
\ca_-\leq \ca(v,w,0)\leq \ca_+
\eeq
for all $v,w\in\ci$ with $v\leq w$.
Then 
for $t>\frac{2(\ca_+-\ca_-)}{\be}$, i.e.~for $t > 0$ satisfying $\frac{\ca_+ - \ca_-}{t} < \frac\be2$, 
\begin{enumerate}
    \item $\ga$ can be written as the graph of a function $y$ (over the $x$--axis) which satisfies 
    \beql{graphbdd} \| y_x(\cdot, t)\|_{L^\infty} \leq \tan\left[ \frac\pi2 - \frac\be2 + \frac{\ca_+ - \ca_-}{t} \right] \eeq
    \item the portion of $\ga(\cdot, t)$ lying within the double sector  consisting of all points $r e^{i\th}\in\C\simeq\R^2$ with $r\in \R$ and 
$$\th\in \s:=\left( \frac\pi2 - \frac\be2 + \frac{\ca_+-\ca_-}{t}, \frac\pi2 + \frac\be2 -\frac{\ca_+-\ca_-}{t}\right)$$
can be written as a polar graph, i.e.~as 
$\{r(\th) e^{i \th}\ :\ \th\in\s\}$ for some 
$r:\s:\to\R$. 
\end{enumerate}
\end{thm}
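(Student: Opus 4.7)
The plan is to derive from the alternative Harnack inequality (developed earlier in the paper) the following pointwise bound on the tangent angle at time $t$:
$$
\psi(u,t)\in\Bigl[-\tfrac\pi2+\tfrac\be2-\tfrac{\ca_+-\ca_-}{t},\ \tfrac\pi2-\tfrac\be2+\tfrac{\ca_+-\ca_-}{t}\Bigr]\qquad\text{for every }u\in\ci.
$$
The Harnack, applied to the flow using the swept-area bounds \eqref{ca_bds}, controls how far $\psi$ at time $t>0$ can deviate from the asymptotic radial values $\psi\to\pi/2-\be/2$ (as $u\to+\infty$) and $\psi\to-\pi/2+\be/2$ (as $u\to-\infty$); the deviation is of order $(\ca_+-\ca_-)/t$. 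That the two ends retain these asymptotic angles for all $t\geq0$ follows from the exponential convergence of the ends to $L_a,L_b$ recorded in Appendix \ref{app:expdec}.

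The hypothesis $t>2(\ca_+-\ca_-)/\be$ is precisely the condition that the interval above is strictly contained in $(-\pi/2,\pi/2)$, so $\cos\psi>0$ everywhere. This yields Part (1) directly: the $x$-component of the tangent vector $\Tau=(\cos\psi,\sin\psi)$ is strictly positive, so $x(\cdot,t)$ is strictly increasing in $u$; properness together with the $L_a,L_b$ asymptotics (which force $x\to\mp\infty$ at the two ends) makes $x$ sweep all of $\R$; and so $\ga(\cdot,t)$ is the graph of a smooth function $y(x)$ whose slope $y_x=\tan\psi$ obeys \eqref{graphbdd}.

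For Part (2) I fix $\th\in\s$ and consider the signed perpendicular distance to the line through the origin at angle $\th$,
$$
F(u):=\langle \ga(u,t),(-\sin\th,\cos\th)\rangle,
$$
whose derivative works out to $F_u=|\ga_u|\sin(\psi-\th)$. The tangent-angle bound combined with $\th\in(\pi/2-\be/2+(\ca_+-\ca_-)/t,\pi/2+\be/2-(\ca_+-\ca_-)/t)$ forces $\psi-\th\in(-\pi,0)$, hence $\sin(\psi-\th)<0$ and $F$ is strictly decreasing along the curve. The radial ends, at angles strictly separated from $\th$ on opposite sides of the line, give $F\to+\infty$ as $u\to-\infty$ and $F\to-\infty$ as $u\to+\infty$, so $F$ has a unique zero $u^\ast(\th)$. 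The signed radius $r(\th)\in\R$ defined by $\ga(u^\ast(\th),t)=r(\th)e^{i\th}$ then furnishes the polar graph; the implicit function theorem supplies smoothness, and the uniqueness of $u^\ast(\th)$ identifies this polar graph with the portion of $\ga(\cdot,t)$ in the double sector (every such point lies on the line at its own angle $\th\in\s$, which by uniqueness is captured by $r(\th)$).

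The main obstacle is the Harnack inequality itself, together with the extraction of the pointwise tangent-angle bound from it. Once that bound is in hand, both conclusions reduce to monotonicity of an elementary scalar function of $u$ ($x$ in Part (1), $F$ in Part (2)); the threshold $t=2(\ca_+-\ca_-)/\be$ is exactly the moment at which the interval containing $\psi$ first fits strictly inside $(-\pi/2,\pi/2)$, and it coincides with the moment at which the sector $\s$ first becomes non-empty.
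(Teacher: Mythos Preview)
Your reduction to the tangent-angle bound
\[
\psi(u,t)\in\Bigl[-\tfrac\pi2+\tfrac\be2-\tfrac{\ca_+-\ca_-}{t},\ \tfrac\pi2-\tfrac\be2+\tfrac{\ca_+-\ca_-}{t}\Bigr]
\]
is exactly right, and your derivations of Parts (1) and (2) from it are correct and essentially the paper's argument (the paper phrases Part (2) more pictorially, but your function $F$ does the same job cleanly).

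The gap is in how you obtain that bound. The Harnack inequality by itself gives only
\[
\ca_-\leq \ca(v,w,t)-t\Psi(v,w,t)\leq \ca_+,
\]
which controls $\Psi(v,w,t)$ \emph{in terms of $\ca(v,w,t)$ at the same positive time $t$}, not in terms of the initial swept-area bounds \eqref{ca_bds}. To extract the stated bound on $\psi$ you need $\ca_-\leq \ca(v,w,t)\leq (\pi-\be)t+\ca_+$ for all $t>0$, and this is \emph{not} a consequence of the Harnack: the evolution equation $\ca_t=\lap_v\ca+\lap_w\ca+\Psi$ has a forcing term $\Psi$ that can be arbitrarily large when the curve spirals, so a priori $\ca(\cdot,\cdot,t)$ could blow up much faster than linearly. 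This is precisely the content of Theorem \ref{ca_control_thm}, whose proof occupies most of Section \ref{arjun_sect} and relies on a delicate observation: at an \emph{extremal} $(v_*,w_*)$ of $\ca(\cdot,\cdot,t)$ one can show $\Psi(v_*,w_*,t)\leq\pi-\be$ (respectively $\geq 0$), which tames the forcing term exactly where the maximum principle needs it. Your proposal treats this step as if it were immediate (``the deviation is of order $(\ca_+-\ca_-)/t$''), but it is in fact the heart of the matter and needs to be invoked explicitly.
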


Note that the initial swept area $\ca(v,w,0)$ is automatically bounded because the integrand in its definition \eqref{swept_area} vanishes on the straight ends, and thus has compact support. Because $\ca(v,v,0)=0$, we must have $\ca_-\leq 0$ and 
$\ca_+\geq 0$.

Before giving any further consequences of the Harnack inequality, we explain the Harnack inequality itself. In order to do that, we need one further definition.

\begin{defn}
\label{Psi_def}
Given a smooth immersed curve $\ga:\ci\to\R^2$, with $\ci\subset \R$ an open interval, 
if we make a continuous choice $\psi:\ci\to\R$ of tangent angle as in 
\eqref{Tau_def},
then we  define the \emph{turning function} $\Psi(v,w)$, for $v,w \in \ci$ with $v \leq w$, to be
$$\Psi(v,w):=\psi(w)-\psi(v).$$
\end{defn}

The turning function $\Psi$ is well-defined, independently of the normalisation chosen for $\psi$, and possesses the same symmetries as the swept area. 

We write the turning function of an evolving curve $\ga(\cdot,t)$ as $\Psi(v,w,t)$.
The central estimate exploited in the proof of Theorem \ref{graphical_cor}
is on this turning function:
\beql{Psi_goodest}
-\frac{\ca_+ - \ca_-}t \leq \Psi(v, w, t) \leq [\pi - \be] + \frac{\ca_+ - \ca_-}t
\eeq
for all $v,w \in \ci$ with $v \leq w$ and all $t \in (0, T)$.
The Harnack inequality is one of the ingredients needed to show \eqref{Psi_goodest}.

\begin{thm}[Alternative Harnack]
\label{main_new_harnack_thm}
Let  $\ga:\ci\times [0,T)\to \R^2$, with $\ci\subset \R$ an open interval, be a curve shortening flow with initially radial ends 
in the sense of Definition \ref{CSF_IRE_def_new}, 
which is smooth down to $t=0$.
Suppose that the swept area of the initial curve has bounds 
\beql{harn_bds}
\ca_-\leq \ca(v,w,0)\leq \ca_+
\eeq
for all $v,w\in\ci$ with $v\leq w$.
Then the Harnack quantity
\beql{gH_def}
\h(v,w,t):=\ca(v,w,t)-t\Psi(v,w,t)
\eeq
satisfies the  Harnack inequality
\beql{gen_Harn}
\ca_-\leq \h(v,w,t)\leq \ca_+
\eeq
for all $v,w\in\ci$ with $v<w$ and all $t\in [0,T)$.
\end{thm}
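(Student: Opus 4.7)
My plan is a parabolic maximum-principle argument applied to the two-parameter function $(v,w)\mapsto \h(v,w,t)$. At $t=0$ we have $\h=\ca$, so the desired bounds are exactly the hypothesis \eqref{harn_bds}; the task is to propagate them for $t>0$. Since $\ca$ and $\Psi$ depend only on the evolving image, I first reparametrize so that the motion is purely normal, $\pt\ga=-\ka\nu$.

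The first step is to derive the evolution of $\h$. Differentiating $\ca(v,w,t)=\tfrac12\int_v^w *(\ga_u\wedge\ga)\,du$ and integrating the $\pt\ga_u$ term by parts in $u$, using $\nu=-{*\Tau}$, yields at an instant when $u$ is arc-length
$$\pt\ca(v,w,t)=\Psi(v,w,t)-\tfrac12\bigl[\ka\langle\Tau,\ga\rangle\bigr]_v^w.$$
In parallel, $\pt\Tau=-\ka_s\nu$ so $\psi_t=\ka_s$, giving $\pt\Psi=[\ka_s]_v^w$. Subtracting,
$$\pt\h=-\bigl[\tfrac12\ka\langle\Tau,\ga\rangle+t\,\ka_s\bigr]_v^w.$$
Using \eqref{simpler_A_exp} it is useful to write $\h=\int_v^w F\,ds$ with the integrand $F(u,t):=-\tfrac12\langle\nu,\ga\rangle-t\ka$, since a direct calculation gives $\partial_s F=-\tfrac12\ka\langle\Tau,\ga\rangle-t\,\ka_s$, so the evolution collapses to
$$\pt\h=(\partial_s F)(w,t)-(\partial_s F)(v,t).$$

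The second step is the spatial maximum principle. From $\h=\int_v^w F\,ds$ I read off $\partial_w\h=F(w)$, $\partial_v\h=-F(v)$, $\partial_v\partial_w\h=0$, so the Hessian of $\h$ in $(v,w)$ is diagonal with entries $(\partial_s F)(w)$ and $-(\partial_s F)(v)$. At an interior maximum of $\h(\cdot,\cdot,t)$, negative semi-definiteness forces $(\partial_s F)(w)\leq 0$ and $(\partial_s F)(v)\geq 0$, which by the formula above yields $\pt\h\leq 0$ there; reversing all signs at an interior minimum gives $\pt\h\geq 0$. A standard perturbation (replace $\h$ by $\h\mp\eps t$ and send $\eps\downto 0$) then precludes any interior violation of the bounds for $t>0$.

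The delicate step is the boundary of the $(v,w)$-domain. Along the diagonal $v=w$ we have $\h=0\in[\ca_-,\ca_+]$ trivially. The real question is what happens as $v$ or $w$ approaches an end of $\ci$: on a genuinely radial piece of the curve, $\ka\equiv 0$ and $\nu\perp\ga$, so $F\equiv 0$ and the ends contribute nothing; at positive times this only holds asymptotically. I plan to use the exponential convergence of the evolving curve to $L_a$ and $L_b$ established in Appendix \ref{app:expdec} to show that $F$ decays rapidly along each end for all $t\in[0,T)$, so that $\h$ extends continuously to the compactified $(v,w)$-domain with $\partial_v\h,\partial_w\h\to 0$ at the ends; any extremum of $\h$ is then attained either in the interior (handled above) or on the diagonal (trivial). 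This asymptotic control is the main obstacle of the argument. If the decay estimates are awkward to apply directly, my fallback is to work on compact sub-intervals $[v_-,v_+]\subset\ci$, possibly adding a small barrier term to trap extrema in the interior, and then pass to the limit $v_-\to\inf\ci$, $v_+\to\sup\ci$.
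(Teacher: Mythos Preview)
Your computation is correct and is exactly the paper's heat equation in disguise: since $\partial_w^2\h=(\partial_s F)(w)$ and $\partial_v^2\h=-(\partial_s F)(v)$ in arc-length, your identity $\pt\h=(\partial_s F)(w)-(\partial_s F)(v)$ is precisely $\h_t=\lap_v\h+\lap_w\h$ (Lemma~\ref{AH_eqs_lem}). The interior maximum-principle step is therefore identical to the paper's.

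The gap is in your boundary paragraph. The assertion that ``any extremum of $\h$ is then attained either in the interior or on the diagonal'' is not justified and is in general false: even if $\partial_v\h=-F(v)\to 0$ as $v$ runs to an end, nothing stops the spatial maximum from sitting on the edge $\{v\to\text{end}\}\times\{0<w<1\}$ of the compactified domain (think of $-v^2+f(w)$). Your fallback of truncating to $[v_-,v_+]$ does not help either, since you have no control of $\h$ on the artificial boundary $\{v=v_-\}$.

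The fix is already contained in your own ingredients, and is what the paper does: apply the same argument one dimension down. The exponential decay from Appendix~\ref{app:expdec} gives not only $F\to 0$ but $(\partial_s F)\to 0$ along each end (since $\ka\langle\Tau,\ga\rangle$ and $\ka_s$ both decay), so your formula with the $v$-term dropped shows that the edge function $w\mapsto\h(0,w,t)$ satisfies the one-dimensional heat equation $\pt\h=\lap_w\h$. Its boundary values are $\h(0,0,t)=0$ and $\h(0,1,t)=\ca(0,1,0)$, the latter being constant in time because at the far corner both boundary terms in $\pt\h$ vanish. The one-dimensional maximum principle then gives $\ca_-\leq\h(0,w,t)\leq\ca_+$ on that edge (this is Lemma~\ref{boundary_lem} and Corollary~\ref{h_bd_cor} in the paper), the other edge is handled by symmetry, and only then does the two-dimensional maximum principle on $\Sigma$ close.
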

As already mentioned, this result is related to previous work, particularly from
\cite{ST1, TY1, DN_L, neves}.
We comment further on some of these connections in Sections 
\ref{evol_eq_sect} and \ref{gen_H_pf} where we prove 
Theorem \ref{main_new_harnack_thm}.

We will prove the graphicality of Theorem \ref{graphical_cor} by controlling the 
tangent angle $\psi(u,t)$ using the  Harnack inequality of Theorem \ref{main_new_harnack_thm}. However, in order to turn the information 
\eqref{gen_Harn}
on $\h$ for $t>0$ into information on $\Psi$ and $\psi$ at the same time $t>0$, 
we need good control on the swept area 
$\ca(\cdot,\cdot,t)$ for that $t>0$. 
The evolution equation \eqref{ca_eq} for $\ca$ that we derive later in Lemma \ref{AH_eqs_lem} tells us that the swept area 
$\ca(v,w,t)$ can increase at an arbitrarily large rate if the curve spirals a large number of times between $v$ and $w$, so initially it looks hopeless to obtain strong enough control on $\ca(\cdot,\cdot,t)$. However, it turns out to be possible to rule out this bad spiralling behaviour between $v$ and $w$ when $(v,w)$ is an extremal of $\ca(\cdot,\cdot,t)$. This leads to the following control, which is key to extracting applications from the Harnack inequality.

\begin{thm}
\label{ca_control_thm}
In the setting of Theorem \ref{main_new_harnack_thm}, 
if,  after rotating, the initially radial ends are at  angles $\pi$ and 
$\al \in [0, \pi)$,
then  
\beql{ca_control_thm_equation}
\ca_-\leq \ca(v,w,t)\leq \al t + \ca_+,
\eeq
for all $v,w\in\ci$ with $v\leq w$, and all $t\in [0,T)$.
\end{thm}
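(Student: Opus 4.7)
The plan is to use the Harnack inequality of Theorem \ref{main_new_harnack_thm}, rearranged as
\[
\ca_- + t\Psi(v,w,t) \;\leq\; \ca(v,w,t) \;\leq\; t\Psi(v,w,t) + \ca_+,
\]
so that both bounds in \eqref{ca_control_thm_equation} follow once we establish $\Psi(v^*, w^*, t) \leq \al$ at any $(v^*, w^*)$ achieving the supremum of $\ca(\cdot,\cdot,t)$, and $\Psi(v_\#, w_\#, t) \geq 0$ at any $(v_\#, w_\#)$ achieving the infimum. As emphasised in the introduction, $\Psi$ cannot be bounded pointwise because the evolving curve may spiral, so restricting to extremal pairs is essential.

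First I would fix $t \in (0, T)$ and analyse an interior extremum $(v^*, w^*)$ of $\ca(\cdot,\cdot,t)$. Differentiating \eqref{swept_area} in its endpoints gives $\partial_v \ca = -\tfrac12 *(\ga_u(v)\wedge \ga(v))$ and $\partial_w \ca = \tfrac12 *(\ga_u(w)\wedge \ga(w))$, so the first-order conditions immediately enforce the radial tangent condition $\ga_u \parallel \ga$ at both $u = v^*$ and $u = w^*$. The second-order conditions impose opposite sign constraints on $\ka\langle \ga, \Tau\rangle$ at the two endpoints, which, via the evolution equation \eqref{ca_eq} of Lemma \ref{AH_eqs_lem}, ensures that the boundary terms in $\partial_t \ca$ have the favourable sign at the extremum, yielding $\partial_t \ca \leq \Psi$ at a maximum and the reverse inequality at a minimum.

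The crux of the proof is then geometric: to convert the radial tangent condition at $\ga(v^*)$ and $\ga(w^*)$ into the sharp bound $\Psi(v^*, w^*, t) \leq \al$ at a maximum (and $\Psi(v_\#, w_\#, t) \geq 0$ at a minimum). A natural approach is to $C^1$-extend the arc $\ga|_{[v^*, w^*]}$ by the two radial rays through $\ga(v^*)$ and $\ga(w^*)$ out to infinity, producing a proper curve from infinity to infinity whose total tangent-angle change equals exactly $\Psi(v^*, w^*, t)$; a rotation-index/winding argument, using the embeddedness of $\ga(\cdot, t)$ and the asymptotic behaviour of the ends on $L_\pi$ and $L_\al$ (quantified by the exponential decay in Appendix \ref{app:expdec}), then supplies the required bound on the total turning. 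Extrema attained only in the limit $v \to -\infty$ or $w \to +\infty$ are handled directly by the same end asymptotics. The main obstacle I anticipate is precisely this geometric step: the radial tangent condition determines $\psi$ only modulo $\pi$, so pinning down the correct integer branch for the continuous tangent angle requires combining the sign information from the second-order extremal conditions with embeddedness and the end asymptotics in a careful way. Once $\Psi \in [0, \al]$ is secured at extrema, substituting back into the rearranged Harnack inequality gives \eqref{ca_control_thm_equation}.
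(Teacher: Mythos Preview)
Your high-level reduction is correct and is genuinely different from the paper's. The paper does \emph{not} feed the Harnack inequality back into the argument; instead it runs a separate maximum-principle argument directly on the evolution equation $\ca_t=\lap_v\ca+\lap_w\ca+\Psi$ (with an $\ep$-perturbation and a first-time-of-contact argument, treating the boundary pieces $\{v=0\}$ and $\{w=1\}$ of $\Si$ separately). Your route --- take the already-established Harnack bound $\ca\le t\Psi+\ca_+$, evaluate it at a spatial maximiser $(v^*,w^*)$ of $\ca(\cdot,\cdot,t)$, and use $\Psi(v^*,w^*,t)\le\al$ there --- is cleaner, and reuses Theorem~\ref{main_new_harnack_thm} rather than reproving a parabolic estimate. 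Both routes, however, rest on exactly the same hard ingredient: the geometric lemma that $\Psi\le\al$ at maximisers of $\ca$ and $\Psi\ge 0$ at minimisers (this is Lemmas~\ref{lem:turnangbdd} and~\ref{lem:turnangbddbdry} in the paper).

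Your second paragraph contains a digression that does not fit your own plan: the observation $\ca_t\le\Psi$ at a spatial maximum belongs to the paper's maximum-principle route, not to your Harnack route, and is not needed for the latter.

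The real gap is in your third paragraph. Your ``extend the arc $\ga|_{[v^*,w^*]}$ by radial rays and invoke a rotation-index argument'' sketch does not obviously deliver the bound $\Psi(v^*,w^*,t)\le\al$. The radial rays you attach are at the polar angles of $\ga(v^*)$ and $\ga(w^*)$, which bear no a priori relation to $\pi$ and $\al$; the extended curve need not be embedded (the rays can cross the arc or each other); and since the tangent is radial only up to sign, there are four inward/outward combinations to handle. You correctly flag the $\bmod\ \pi$ ambiguity as the crux, but do not propose a mechanism to resolve it. The paper's mechanism is quite different from a rotation-index argument: it introduces \emph{winding functions} $\th_{v^*},\th_{w^*}$ (tracking the chord direction relative to the tangent) and uses the identity $\Psi(v^*,w^*)=\th_{v^*}(w^*)-\th_{w^*}(v^*)$; the global bounds on $\th_{v^*}$ and $\th_{w^*}$ are then obtained by contradiction, showing that a forbidden value of the winding function would force a sub-arc of $\ga$ to enclose a region whose signed (swept) area has the wrong sign, contradicting the extremality of $(v^*,w^*)$ for $\ca$ via the elementary monotonicity $\ca(v_1,v^*)\le 0\le\ca(v^*,v_2)$ for $v_1\le v^*\le v_2\le w^*$. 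The link to $\al$ finally enters through additivity, $\Psi(v^*,w^*)=\Psi(0,1)-\Psi(0,v^*)-\Psi(w^*,1)=\al-\Psi(0,v^*)-\Psi(w^*,1)$, together with nonnegativity of the two end pieces. Your proposal does not yet contain an analogue of this swept-area contradiction step, and that is where the work lies.
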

When coupled with the alternative Harnack estimate Theorem \ref{main_new_harnack_thm}, we get the effective control \eqref{Psi_goodest} on the turning function required to prove Theorem \ref{graphical_cor}.
The proofs of Theorems  \ref{graphical_cor} and \ref{ca_control_thm} are given in Section \ref{arjun_sect}.

\begin{figure}
\centering
\begin{tikzpicture}[scale=1]

\draw [thick, ->] (0,0) -- (-2.8*1,2.8*1.732);
\draw [thick, ->] (0,0) -- (-2.8*2,0);

\draw [-, dotted] (0,0) -- (-2.8*0.985*2, 2.8*0.174*2 );
\draw [-, dotted] (0,0) -- (-2.8*0.866*2, 2.8*0.5*2 );

\draw[very thick, samples=30,domain=0.5:58.8, ->] 
plot ({-cos(\x)*(\x/80+1.5-0.65*cos(6*\x))/(sin(3*\x))^(0.5)}, {sin(\x)*(\x/80+1.5-0.65*cos(6*\x))/(sin(3*\x))^(0.5)} ) node[right]{$\ga(\cdot,t)$};

\draw[-] (-0.65,0.22) node[right]{$\be$};
\draw[samples=40,domain=0:60] plot ({-cos(\x)/1.5}, {sin(\x)/1.5}); 

\end{tikzpicture}
\caption{Illustration of a solution with $\ca_-=0$.
Each dotted radial line intersects the curve $\ga(\cdot, t)$ precisely once.}
\label{polar_graphical_case}
\end{figure}
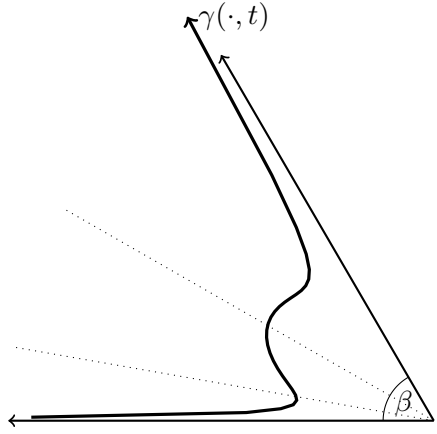

Consider the case $\ca_-=0$.
Intuitively this means that $\ga(\cdot,0)$ cannot move anti-clockwise around the origin.
Keeping in mind the embeddedness of $\ga(\cdot,0)$,
which prevents the curve from passing through the origin
(except in the trivial case that the flow is a static straight line through the origin)
we see that in this case $\ga(\cdot,0)$, and the subsequent flow $\ga$, map into the sector between $L_\pi$ and $L_{\al}$
defined by $S = \{re^{i\th}\ :\ r\geq 0,\ \th\in [\al,\pi]\}\subset \C\simeq\R^2$. 
Moreover,
$\ga(\cdot,t)$ describes a polar graph for $t\in (0,T)$ in the sense given in \cite[Section 3]{ST1}, as illustrated in Figure \ref{polar_graphical_case}.
Indeed, by considering Angenent's crossing principle (see e.g.~\cite{angenent1991parabolic, ST1}) $\ga(\cdot,t)$ will meet each line $L_\th$ with $\th\in (\al,\pi)$ exactly once for $t\in (0,T)$.
In order that we parametrise our curves in the same direction as before, we use a polar angle $\vph:=\pi-\th$, where $\th$ is the usual polar angle measuring anticlockwise from the positive $x$ axis. 
In this way, we parametrise the curves by $\vph\in (0,\be)$, where we interpret $\be := \pi - \al > 0$ as the interior angle of the sector $S$.
In particular, we can write the swept area of our polar graphs $\ga$ as
$$\ca(\vph_0,\vph_1):=\half\int_{\vph_0}^{\vph_1} *(\ga_\vph\wedge \ga) \, d\vph.$$
If we write $r(\vph)$ for the radial polar value of the graph then 
$*(\ga_\vph\wedge \ga)=r^2$. 
Indeed, if we write $\xi=\xi(\vph):=(\cos\vph,\sin\vph)$ so $\ga(\vph)=r\xi$, then 
$\ga_\vph=r_\vph \xi+r ({-{*\xi}})$, and so $\ga_\vph\wedge \ga=r^2 ({-{*\xi}})\wedge\xi = r^2 \xi\wedge  (*\xi) $,
and hence $*(\ga_\vph\wedge \ga)=r^2$.
Therefore the swept area coincides with the 
area under the polar graph in the sense that 
\begin{equation}
\label{ca_cv}
\ca(\vph_0,\vph_1)=\cv(\vph_0,\vph_1):=\int_{\vph_0}^{\vph_1} \frac{r(\vph)^2}{2} \, d\vph.
\end{equation}
It is natural to extend the domain on which $\vph_0<\vph_1$ live to the whole closed interval $[0,\be]$ when the resulting total area is finite, as it is here.

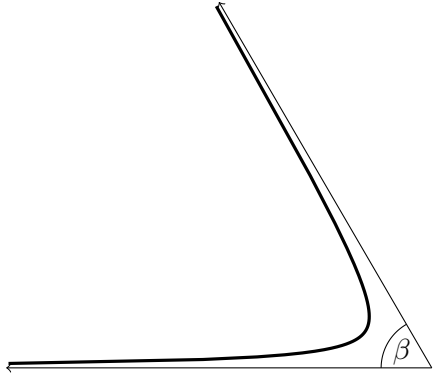
\begin{figure}
\centering
\begin{tikzpicture}[scale=1]

\draw [->] (0,0) -- (-2.8*1,2.8*1.732);
\draw [->] (0,0) -- (-2.8*2,0);

\draw[very thick, samples=40,domain=0.6:59.4] plot ({-cos(\x)/(sin(3*\x))^(0.5)}, {sin(\x)/(sin(3*\x))^(0.5)} );

\draw[-] (-0.65,0.22) node[right]{$\be$};
\draw[samples=40,domain=0:60] plot ({-cos(\x)/1.5}, {sin(\x)/1.5}); 

\end{tikzpicture}
\caption{Qualitative picture of the $\be$-wedge solution $\ga_\be$ at $t=1$.}
\label{beta_wedge_fig}
\end{figure}

A particularly natural convex polar graphical solution to curve shortening flow arises as the  solution starting with a straight line that has been bent at the origin to have an opening angle $\be\in (0,\pi)$, as illustrated in Figure \ref{beta_wedge_fig} and discussed in  \cite[Section 3]{ST1}.
This so-called $\be$-wedge solution scales self-similarly; if at each time $t > 0$ we parametrise $\ga(\cdot,t)$ and $\ka(\cdot,t)$ 
by  the tangent angle $\psi\in (0,\pi-\be)$ (or the polar angle $\vph\in (0,\be)$), then we have
\beql{gamma_beta}
t^{-\half}\ga(\psi,t)=\ga_\be(\psi):= \ga(\psi,1)
\eeq
and 
\beql{kappa_beta}
t^{\half}\ka(\psi,t)=\ka_\be(\psi):= \ka(\psi,1).
\eeq
The $\be$-wedge solution is a polar graphical curve shortening flow for which the Harnack quantity $\h$ is identically zero. This can be derived by integrating the equation for an expander, or by considering area as in \cite[Section 3]{ST1}. Thus, writing $\cv_\be(\vph_0,\vph_1)$ and 
$\Psi_\be(\vph_0,\vph_1)$ for the swept area and turning function of the $\be$-wedge solution at time $t=1$, we have
\beql{wasL71}
\cv_\be(\vph_0,\vph_1)\equiv \Psi_\be(\vph_0, \vph_1).
\eeq

\begin{figure}
\centering
\begin{tikzpicture}[scale=1.2]
\draw [->] (0,0) -- (3*1,3*1.732) node[right]{$\vph=\beta$};
\draw [->] (0,0) -- (-3*2,0) node[below]{$\vph=0$};
\draw [-] (0,0) -- (-1,1) ;
\draw [-.] (-1.3,1.05) node[above]{$\ga(\vph_0, t)$};
\fill[blue!8] (0,0) to  (.5,2.5) to[out=-260,in=180-240] (0,4) to[out=-240,in=180-160] (-1,1) to (0,0);
\draw[very thick, <-] (2.95,3*1.734) to[out=180+59,in=180-260] (.5,2.5) to[out=-260,in=180-240] (0,4) to[out=-240,in=180-160] (-1,1) to[out=-160, in=180-130] (-3,.9) to[out=-130, in=0] (-6,0.05);
\draw [-] (0,0) -- (-1,1) ;
\draw [-] (0,0) -- (0.5,2.5) ;
\draw [<-] (-0.05,3) -- (-.5,3) node[left]{$\mathcal{V}(\vph_0, \vph_1, t)$};
\draw[->] (1.05,2.95)  node[above]{$\ga(\vph_1, t)$} to (.6,2.6);
\draw[thick] [->] (.5,2.5) -- (.65,1.9) ;
\draw[thick] [->] (-1,1) -- (-.5,1.2) ;
\node at (-1,1) [circle,fill,inner sep=1pt]{};
\node at (.5,2.5) [circle,fill,inner sep=1pt]{};
\end{tikzpicture}
\caption{Illustration of $\mathcal{V}(\vph_0, \vph_1, t)$ and $ \Psi(\vph_0, \vph_1, t) $ for \( \beta \approx \frac{2\pi}{3} \). 
Pictorially,  $\Psi(\vph_0, \vph_1, t)  \approx -\frac{\pi}{2}$ is the anti-clockwise angle through 
which the arrow at $\ga(\vph_0,t)$ turns to get to the arrow at $\ga(\vph_1, t)$.}
\label{arjun_fig}
\end{figure}
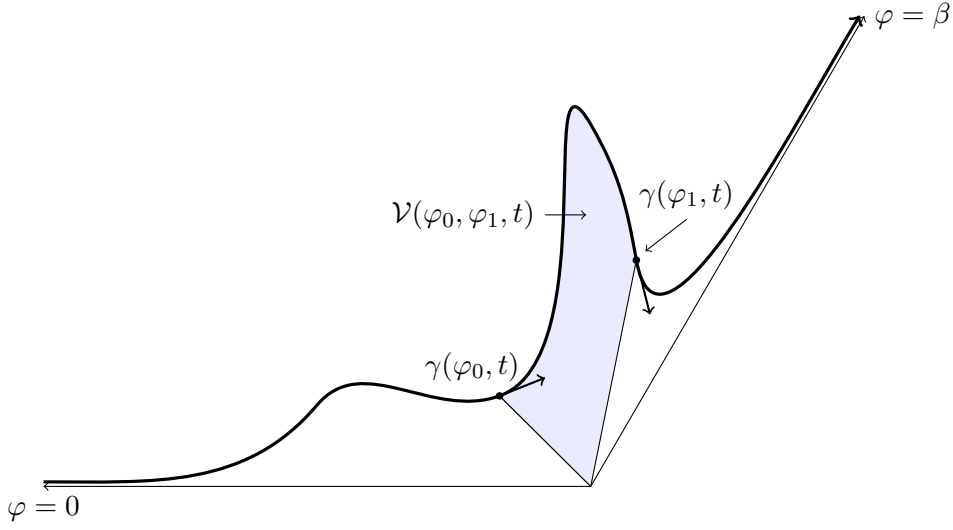

The  Harnack inequality of Theorem \ref{main_new_harnack_thm} will give us most of the following theorem,
which was originally stated in the first version of 
\cite{ST1}.

\begin{thm}[Polar graphical Harnack inequality]
\label{wedge_harnack_thm1}
Let $\beta\in (0,\pi]$, and  suppose that $r:(0,\beta)\times [0, \infty)\to (0,\infty)$, written as $r(\vph,t)$,
is a smooth function that is the radial polar coordinate of a curve shortening flow 
with initially radial ends  at angles $\pi$ and $\pi - \beta$,
that is a polar graph for $t>0$.
We define
$$ \cv(\vph_0,\vph_1,t):=\int_{\vph_0}^{\vph_1}\frac{r(\vph,t)^2}{2} \, d\vph, $$
for $0\leq \vph_0\leq \vph_1\leq \be$
(see Figure \ref{arjun_fig})
to represent the area under the polar graph,
and abbreviate $\cv(t):=\cv(0,\beta,t)$ and  
$\cv_0:=\cv(0)<\infty$.
Then for all $t\geq 0$  we have
\begin{equation}
\label{total_V}
\cv(t) = \cv_0 + (\pi - \be)t,
\end{equation}
while for all $t> 0$ and all $0\leq\vph_0\leq\vph_1\leq\beta$,  we have
\begin{equation}
\label{wh1}
0\leq \h(\vph_0,\vph_1,t):=\cv(\vph_0,\vph_1,t)-t \Psi(\vph_0, \vph_1,t)
\leq \cv_0.
\end{equation}
If $\be = \pi$, then 
\beq \label{wh2pi} 
- \frac{ \cv_0 }{t} \leq  \Psi(\vph_0, \vph_1, t) \leq \frac{\cv_0}{t} \eeq
for $t > 0$, while if 
$\be \in (0,\pi)$, then 
\begin{equation}
\label{wh2}
- \frac{ \cv_0 }{t} \leq  \Psi(\vph_0, \vph_1, t) - \Psi_\be(\vph_0, \vph_1)  \leq \frac{\cv_0}{t}
\end{equation}
for $t>0$, where $\Psi_\be(\vph_0, \vph_1)$ is the turning function of the $\be$-wedge solution.
\end{thm}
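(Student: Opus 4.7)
The plan is to first establish the total-area identity \eqref{total_V} by an evolution-equation computation, then deduce the Harnack inequality \eqref{wh1} directly from Theorem \ref{main_new_harnack_thm}, and finally combine \eqref{wh1} with a pointwise comparison between $\ga$ and the $\be$-wedge to extract the tangent-angle bounds \eqref{wh2pi} and \eqref{wh2}.

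For \eqref{total_V}, I would differentiate $\cv(\vph_0,\vph_1,t)=\int_{\vph_0}^{\vph_1}\frac{r^2}{2}\,d\vph$ at fixed $\vph_0,\vph_1$. Writing the polar graph as $\ga(\vph,t)=r(\vph,t)(-\cos\vph,\sin\vph)$, a direct computation of the normal $\nu=-{*\ga_\vph}/|\ga_\vph|$ combined with the CSF equation $\langle\ga_t,\nu\rangle=-\ka$ gives $r r_t=\ka|\ga_\vph|$, so that $\frac{d}{dt}\cv(\vph_0,\vph_1,t)=\int \ka\,ds=\Psi(\vph_0,\vph_1,t)$. For the full interval, the radial ends pin the tangent angle at $\psi=0$ and $\psi=\pi-\be$ (justified by the exponential convergence of Appendix \ref{app:expdec}), hence $\Psi(0,\be,t)\equiv\pi-\be$, and integrating in time yields \eqref{total_V}. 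The Harnack inequality \eqref{wh1} is then immediate from Theorem \ref{main_new_harnack_thm} applied with $\ca_-=0$ and $\ca_+=\cv_0$, both being trivial bounds on $\ca(\vph_0,\vph_1,0)=\cv(\vph_0,\vph_1,0)$ since $\ca\equiv\cv$ on polar graphs by \eqref{ca_cv}.

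For the bound \eqref{wh2} with $\be<\pi$, I would use Angenent's crossing principle to show $r(\vph,t)\geq r_\be(\vph,t)$, and hence $\cv(\vph_0,\vph_1,t)\geq\cv_\be(\vph_0,\vph_1,t)$, on every subinterval. The initial curve $\ga(\cdot,0)$ sits strictly inside the open sector in the finite plane and coincides with the sector boundary only asymptotically at infinity, while $\ga_\be$ at $t=0$ is the singular bent sector boundary; a limiting argument (comparing with the shifted $\be$-wedge $\ga_\be(\cdot,t+\ep)$ and sending $\ep\downto 0$) gives zero crossings for all $t>0$. Additivity of $\cv-\cv_\be$ across the partition $[0,\vph_0]\cup[\vph_0,\vph_1]\cup[\vph_1,\be]$, combined with the full-interval identity $\cv(0,\be,t)-\cv_\be(0,\be,t)=\cv_0$ that follows from \eqref{total_V} together with $\cv_\be(0,\be,t)=(\pi-\be)t$, then pins each of the three non-negative summands to lie in $[0,\cv_0]$. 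Using $\h\in[0,\cv_0]$ and the identity $\cv_\be=t\Psi_\be$ (i.e.~$\h_\be\equiv 0$ by \eqref{wasL71}), I then compute $t(\Psi-\Psi_\be)=(\cv-\cv_\be)-\h\in[-\cv_0,\cv_0]$, which is \eqref{wh2}. The case $\be=\pi$ of \eqref{wh2pi} is handled identically, with $\Psi_\pi\equiv 0$ and $\cv(\vph_0,\vph_1,t)\leq\cv(0,\pi,t)=\cv_0$ by \eqref{total_V}.

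The main obstacle is the careful justification of the crossing-principle comparison with the singular $\be$-wedge at $t=0$ and the attendant control of the ends at infinity; once the pointwise bound $r\geq r_\be$ is secured, the remaining estimates for $\h$, $\cv-\cv_\be$ and $t(\Psi-\Psi_\be)$ fit together mechanically.
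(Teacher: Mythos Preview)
Your proposal is correct and follows essentially the same route as the paper: \eqref{total_V} via the evolution $\frac{d}{dt}\cv=\Psi$ (which the paper obtains by citing Lemma \ref{boundary_lem}), \eqref{wh1} as the polar-graph specialisation of Theorem \ref{main_new_harnack_thm} with $\ca_-=0$, $\ca_+=\cv_0$, and \eqref{wh2}/\eqref{wh2pi} by combining \eqref{wh1} with the comparison $r\geq r_\be$ and the identity $\cv_\be=t\Psi_\be$ to squeeze $\cv-t\Psi_\be$ into $[0,\cv_0]$. Your additivity argument for $0\leq\cv-\cv_\be\leq\cv_0$ on subintervals and your care with the $\ep$-shifted wedge at $t=0$ are exactly what the paper's brief phrase ``by the comparison principle'' is implicitly invoking.
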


Observe that the function $r(\cdot,0)$ is parametrising the initial curve without the ends, 
i.e.~between $\ga(u_1)$ and $\ga(u_2)$ in Definition \ref{CSF_IRE_def_new}, cf.~Figure \ref{CSF_IRE},
whereas for $t>0$,
$r(\cdot,t)$ is parametrising the entire curve.

The proof of Theorem \ref{wedge_harnack_thm1} will be given in Section \ref{gen_H_pf}.

\begin{rmk}
\label{relax_ends_rmk}
Theorems \ref{graphical_cor}, \ref{main_new_harnack_thm} and \ref{wedge_harnack_thm1}  consider flows with initially radial ends. They can all be generalised to ask only that the ends are asymptotically radial in a sufficiently strong sense, e.g.~that they can be written as graphs over radial lines of functions that decay exponentially together with their derivatives. 
This latter condition is preserved under the flow, 
see Appendix \ref{app:expdec}.
\end{rmk}

\medskip

\emph{Acknowledgements:} 
AS was supported by EPSRC studentship EP/R513374/1.
PT  was partly supported by EPSRC grant EP/T019824/1.
PT was partly supported by the National Science Foundation (USA) under Grant No.~DMS-1928930, while  PT was Chern professor at the Simons Laufer Mathematical Sciences Institute in Berkeley, California, during the Autumn 2024 semester. 
PT thanks Gerhard Huisken for discussions about Hamilton's Harnack inequality. PT thanks Felix Schulze for  
discussions about the existing literature.
For the purpose of open access, the authors have applied a Creative Commons Attribution (CC BY) licence to any author accepted manuscript version arising.

\section{Evolution of the swept area, turning function and  Harnack quantity}
\label{evol_eq_sect}

In this section we derive evolution equations for the swept area $\ca$ and turning function $\Psi$, and the Harnack quantity $\h$, as introduced in Definitions \ref{swept_area_def} and \ref{Psi_def}, and in \eqref{gH_def}.
In order to derive these heat equations, we need notation for the Laplacian.
Given a curve $\ga(u)$, with parameter $u\in \ci$ and generally also evolving in time, 
if $f(u)$ is a function having $u$ as an 
argument (possibly amongst others), then we write 
$\lap_u f$ for the Laplacian of $f$ with respect to the pull-back of the Euclidean metric on $\R^2$ by $\ga$. If $u$ has been chosen as an arc-length parameter for $\ga$ at some time, then 
$$\lap_u f=f_{uu}$$
at that time.
Note that we can write curve shortening flow as 
$$\ga_t=\lap_u \ga.$$

It is well known how tangent angles evolve under curve shortening flow. For example, the following lemma is immediately implied by \cite[Lemma 3.1.5]{gage_hamilton}.
\begin{lem}
\label{th_psi_lem}
Let $\ga:\ci\times (0,T)\to \R^2$ be a smooth map 
such that $\ga(\cdot,t)$ is an evolving immersed curve satisfying the curve shortening flow equation $\ga_t=
\vec{\ka}$, where $\vec{\ka}$ is the curvature vector of $\ga(\cdot,t)$.
Let  $\psi(u,t)$ be the tangent angle and  $\Psi(v,w,t)$ be the turning function of $\ga $, cf.~Definition \ref{Psi_def}.
Then
\beql{th_eq}
\psi_t=\lap_u \psi
\eeq
and, in particular, 
$$\Psi_t=\lap_v \Psi + \lap_w \Psi$$
on $\Si\times (0,T)$.
\end{lem}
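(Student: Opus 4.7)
The plan is to verify the evolution equation $\psi_t = \lap_u \psi$ pointwise at an arbitrary time $t_0 \in (0,T)$, after which the turning function statement is immediate. Both sides of \eqref{th_eq} are invariant under orientation-preserving reparametrisation of $\ci$, since $\lap_u$ is built from the pulled-back metric and $\psi$ is intrinsic up to an additive integer multiple of $2\pi$. I would therefore reduce to the case where the parameter is arc-length at the instant $t_0$, in which $\lap_u = \partial_u^2$ and $\ka = \psi_u$.

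First I would compute $\partial_u \ga_t$. Using $\ga_t = \vec{\ka} = \ka(*\Tau)$ together with the identity $\Tau_u = \ka(*\Tau)$ obtained by differentiating \eqref{Tau_def}, and the fact that $*^2 = -\id$ on $\R^2$, so that $(*\Tau)_u = -\ka \Tau$, one finds at time $t_0$
\[
\partial_u \ga_t = \ka_u (*\Tau) - \ka^2 \Tau.
\]
Exchanging the order of derivatives and writing $\rho := |\ga_u|$, so that $\ga_u = \rho \Tau$ with $\rho = 1$ at $t_0$, we have $\partial_t \ga_u = \rho_t \Tau + \Tau_t$ there. Matching tangential and normal components recovers the well-known length-element evolution $\rho_t = -\ka^2$ (which is incidental here) and yields the key relation $\Tau_t = \ka_u (*\Tau)$. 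Comparing this with the consequence $\Tau_t = \psi_t (*\Tau)$ of differentiating \eqref{Tau_def} in $t$ gives $\psi_t = \ka_u = \psi_{uu} = \lap_u \psi$ at $t_0$, which is \eqref{th_eq}.

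For the turning function identity I would simply apply $\partial_t$ to $\Psi(v,w,t) = \psi(w,t) - \psi(v,t)$ and invoke \eqref{th_eq} at the parameter values $v$ and $w$. Bookkeeping the minus sign in the definition of $\Psi$ against the fact that $\lap_v \Psi = -\psi_{uu}(v,t)$ (since $\Psi$ depends on $v$ only through $-\psi(v,t)$) while $\lap_w \Psi = \psi_{uu}(w,t)$ gives the claimed $\Psi_t = \lap_v \Psi + \lap_w \Psi$. No substantive obstacle arises; the only delicate point is the invariance reduction that lets one pass to arc-length at a single time, which is justified because $\lap_u$ is the Laplace--Beltrami operator of the pulled-back metric and transforms correctly under changes of parameter.
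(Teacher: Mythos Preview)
Your argument is correct. The paper does not actually give its own proof of this lemma; it simply cites \cite[Lemma 3.1.5]{gage_hamilton}, and what you have written is essentially the standard derivation found there (fix a time, pass to arc-length, commute $\partial_t$ and $\partial_u$ on $\gamma$, and read off $\Tau_t=\ka_u(*\Tau)$), so there is nothing to compare.
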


We are now in a position to describe the PDE governing the swept area $\ca$. 

\begin{lem}
\label{AH_eqs_lem}
Let $\ga:\ci\times (0,T)\to \R^2$ be a smooth map 
such that $\ga(\cdot,t)$ is an evolving immersed curve satisfying the curve shortening flow equation
$\ga_t=\vec{\ka}$, where $\vec{\ka}$ is the curvature vector of $\ga(\cdot,t)$.
Let $\ca(v,w,t)$ denote the swept area of $\ga(\cdot,t)$
between $u=v$ and $u=w$, cf.~Definition \ref{swept_area_def}.
Then
\beql{ca_eq}
\ca_t=\lap_v\ca+\lap_w\ca+\Psi(v,w,t),
\eeq
where $\Psi$ is the turning function of $\ga$.
In particular, the Harnack quantity $\h(v,w,t):=\ca(v,w,t)-t\Psi(v,w,t)$
from Theorem \ref{main_new_harnack_thm} satisfies
\beql{wedge_harnack_eq}
\h_t=\lap_v\h+\lap_w\h.
\eeq
\end{lem}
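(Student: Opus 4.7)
The plan is to compute $\ca_t$ directly by differentiating under the integral in the defining formula $2\ca(v,w,t) = \int_v^w *(\ga_u\wedge\ga)\,du$, after identifying $*(\alpha\wedge\beta)$ with $\det(\alpha,\beta)$. Since the parameter endpoints $v,w$ are fixed under the parametrisation $\ga_t = \vec\ka$, the time derivative falls only on the integrand. The Leibniz identity $\pl{}{u}\det(\ga_t,\ga) = \det(\ga_{ut},\ga) + \det(\ga_t,\ga_u)$ lets us recast $\pl{}{t}\det(\ga_u,\ga)$ as $\pl{}{u}\det(\ga_t,\ga) + 2\det(\ga_u,\ga_t)$. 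Integrating over $(v,w)$ then produces
\begin{equation*}
2\ca_t = \bigl[\det(\vec\ka,\ga)\bigr]_{u=v}^{u=w} + 2\int_v^w \det(\ga_u,\vec\ka)\,du.
\end{equation*}

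The next step is to identify the two pieces separately. For the bulk term, writing $\si:=|\ga_u|$ and $\Tau=\ga_u/\si$, we have $\vec\ka = \ka\,{*\Tau}$ and $\det(\Tau,{*\Tau})=1$, so $\det(\ga_u,\vec\ka)\,du = \si\ka\,du = \ka\,ds = d\psi$, whose integral is $\Psi(v,w,t)$. For the boundary term, the plan is to verify the two identities
\[
\lap_w\ca = \tfrac{1}{2}\det(\vec\ka,\ga)\big|_{u=w}, \qquad \lap_v\ca = -\tfrac{1}{2}\det(\vec\ka,\ga)\big|_{u=v}.
\]
The fundamental theorem of calculus gives $\pl{\ca}{w} = \tfrac{\si}{2}\det(\Tau,\ga)$; dividing by $\si$ converts this to the arc-length derivative $\tfrac{1}{2}\det(\Tau,\ga)$, and one further arc-length derivative, using $\pl{\Tau}{s}=\vec\ka$ and $\det(\Tau,\Tau)=0$, gives the first identity. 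The second follows identically up to the sign coming from the lower limit. Assembling these pieces delivers \eqref{ca_eq}.

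The Harnack equation \eqref{wedge_harnack_eq} then follows in one line from \eqref{ca_eq} combined with Lemma \ref{th_psi_lem}, which supplies $\Psi_t = \lap_v\Psi + \lap_w\Psi$:
\[
\h_t = \ca_t - \Psi - t\Psi_t = (\lap_v\ca + \lap_w\ca) - t(\lap_v\Psi + \lap_w\Psi) = \lap_v\h + \lap_w\h.
\]
The main subtlety throughout is that the parametrisation $\ga_t = \vec\ka$ does not preserve arc-length, so the intrinsic Laplacian $\lap_w$ carries speed factors when rewritten in $u$-derivatives. These cancel cleanly against the $\si$ already present in $\pl{\ca}{w}$, which is exactly what makes the identification of the boundary term with $\lap_v\ca + \lap_w\ca$ so tidy.
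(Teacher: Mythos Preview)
Your argument is correct and is essentially the same as the paper's: both differentiate under the integral, integrate by parts to split off the boundary term $[\det(\vec\ka,\ga)]_v^w$, identify that boundary term with $2(\lap_v\ca+\lap_w\ca)$, and recognise the bulk integrand $\det(\ga_u,\vec\ka)\,du=\ka\,ds$ as integrating to $\Psi$. The only cosmetic difference is that the paper fixes an arbitrary time $\tau$ and takes $u$ to be arc-length at that instant (so $\si\equiv 1$ and $\lap_w=\partial_w^2$ there), whereas you carry the speed factor $\si$ explicitly throughout; the computations match line for line once this is accounted for.
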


\begin{proof}[Proof of Lemma \ref{AH_eqs_lem}]
As in the proof of Lemma \ref{th_psi_lem}, 
it suffices to establish the equations at an arbitrary time $t=\tau\in (0,T)$, and we choose
$u\in\ci$ to be an arc-length parameter at time $t=\tau$.
Compute at time $t=\tau$
$$2\ca_{w}=*\big[\ga_u\wedge \ga\big](w)$$
and 
\beql{Aww}
2\ca_{ww} = *\big[\ga_{uu}\wedge \ga\big](w)
=*\big[\ga_t\wedge \ga\big](w),
\eeq
since $\ga_t=\ga_{uu}$.
Similarly we have $2\ca_{v}=-*\big[\ga_u\wedge \ga\big](v)$ and
\beql{Avv}
2\ca_{vv} = -\, {*}\big[\ga_t\wedge \ga\big](v).
\eeq
%
Because we are fixing a parametrisation $u$, rather than trying to continually adjust to arc-length parametrisation as $t$ changes as in \cite{gage_hamilton}, we have $\ga_{ut}=\ga_{tu}$ and can integrate by parts to compute
\beqa
\label{At}
2\ca_t &= *\int_{v}^w (\ga_{tu}\wedge\ga+\ga_u\wedge \ga_t) \, du\\
&= *\big[\ga_t\wedge\ga\big]_{v}^w
+*\,2\int_{v}^w \ga_u\wedge \ga_t\, du.
\eeqa
Combining \eqref{Aww}, \eqref{Avv} and \eqref{At} gives
$$\ca_t=\ca_{vv}+\ca_{ww}+
*\int_{v}^w \ga_u\wedge \ga_t\, du.$$
Recall from Section \ref{intro} that at time $t=\tau$ we have
$\ga_t = -\ka \nu = \ka (*\ga_u)$,
and so
$$*(\ga_u\wedge \ga_t)=\ka\ {*}(\ga_u \wedge [* \ga_u]  )
=\ka$$
because $\ga_u$ has unit length. 
As
\beql{psi_int}
\Psi(v,w,\tau)=\int_v^w \ka(u) \, du,
\eeq
we obtain 
\beq
\ca_t=\ca_{vv}+\ca_{ww}+\Psi(v,w,\tau),
\eeq
which is \eqref{ca_eq} at our arbitrary time $t=\tau$. 
The final conclusion \eqref{wedge_harnack_eq} then follows immediately by combining with Lemma \ref{th_psi_lem}.
\end{proof}

\begin{rmk}\label{neves_rmk}
If we fix $v_0 \in \ci$ and define $\be : \ci \times [0, T) \to \R$, up to a normalisation constant, by $\be(w, t) := - 2 \, \ca(v_0, w, t) + V(t)$, where $V : [0, T) \to \R$ is chosen to satisfy $V'(t) = -2 \psi(v_0,t) - {*[\ga_{uu} \wedge \ga]}(v_0,t)$, 
then the $\be(w,t)$ here is the $\be$ introduced in \cite[\S3.1.2]{neves}. 
\end{rmk}

\section{Proof of the alternative Harnack inequality}
\label{gen_H_pf}

In this section we apply the evolution equations derived in the previous section in an unfamiliar maximum principle argument in order to prove 
Theorem \ref{main_new_harnack_thm}.

We begin by simplifying the parametrisation $\ga$ without any loss of generality.
Since $\ga$ is a curve shortening flow with initially radial ends, and in particular is uniformly proper with bounded curvature, we can 
reparametrise so that $\ga$ satisfies 
$\ga_t= \vec{\ka}=-\ka\nu$ rather than the more general \eqref{non_normal_CSF}.
Next, we make a time-independent reparametrisation of $\ci$ to reduce to the case
that $\ci=(0,1)$. 

Define  
\beql{Si_def}
\Si:=\{(v,w)\in (0,1)\times(0,1)\ :\ v<w\}.
\eeq
In Theorem \ref{AH_eqs_lem} we have shown that $\h$ satisfies a heat equation on 
$\Si\times (0,T)$ and we will aim to apply the maximum principle to $\h$ on $\Si\times [0,T)$. This  will require us to extend $\h$ to the boundary edges in $\partial\Si\times [0,T)$ and control it there. To do that we find a new heat equation that governs $\h$ one dimension down on these boundary components and apply the maximum principle. Of course, this requires us to go yet another dimension down and understand how
$\h$ behaves on the boundary points of these one-dimensional edges, but this will result from our knowledge of the spatial asymptotics of $\ga$.

In the following lemma 
the tangent angle $\psi$ and turning function $\Psi$ are from Definition \ref{Psi_def}, but because of the initially radial end at angle $\pi$, we are free to determine the choice of $\psi$ (otherwise determined up to an integer multiple of $2\pi$) by the condition that $\psi(u,t)\to 0$ as $u\downto 0$. Of course, the limit is $0$ rather than $\pi$ because we are moving in, rather than out, along the end.
Moreover, because the curve $\ga$ in the following lemma is embedded, and it exits along the radial line at angle $\al$, we must have 
$\psi(u,t)\to\al$ as $u\upto 1$.

\begin{lem}
\label{boundary_lem}
Let $\ga:(0,1)\times [0,T)\to \R^2$ be a curve shortening flow with initially radial ends at angles 
$\pi$ and $\al\in (-\pi,\pi)$ in the sense of Definition \ref{CSF_IRE_def_new}, 
that is smooth down to $t=0$.
Suppose further that $\ga$ satisfies  
$\ga_t = \vec{\ka}$, i.e. it has no tangential reparametrisation that would be allowed by \eqref{non_normal_CSF}.

Then an appropriate choice of $\psi$ extends continuously to $[0,1]\times [0,T)$, with 
$\psi(0,t)=0$ and $\psi(1,t)=\al$. The resulting extension of 
$\Psi$ to $\overline\Sigma \times [0,T)$  satisfies $\Psi(0,w,t)=\psi(w,t)$ and
\begin{equation}
\label{Psi_props}
\left\{
\begin{aligned}
\left(\pl{}{t}-\lap_w\right)\Psi(0,w,t) &= 0 &\quad w\in (0,1) \\
\Psi(0,0,t) &= 0 &\\
\Psi(0,1,t)&= \al,&
\end{aligned}
\right.
\end{equation}
for all $t\in (0,T)$.
Moreover, $\ca$ extends continuously to 
$\overline\Sigma \times [0,T)$,
and the resulting function $\ca(0,w,t)$ satisfies
\begin{equation}
\label{A_props}
\left\{
\begin{aligned}
\left(\pl{}{t}-\lap_w\right)\ca(0,w,t) &= 
\Psi(0,w,t)&\quad w\in (0,1)\\
\ca(0,0,t) &= 0 &\\
\ca(0,1,t)&= \ca(0,1,0)+\al t&
\end{aligned}
\right.
\end{equation}
for all $t\in (0,T)$.
In particular, the Harnack quantity 
$\h(v,w,t):=\ca(v,w,t)-t\Psi(v,w,t)$ defined in Theorem \ref{main_new_harnack_thm}
extends continuously to $\overline\Sigma \times [0,T)$ and $\h(0,w,t)$ satisfies
\begin{equation}
\label{h_props}
\left\{
\begin{aligned}
\left(\pl{}{t}-\lap_w\right)\h(0,w,t) &= 0 &\quad w\in (0,1)\\
\h(0,0,t) &= 0 &\\
\h(0,1,t)&= \ca(0,1,0)&\\
\h(0,w,0) &= \ca(0,w,0)
\end{aligned}
\right.
\end{equation}
for all $t\in (0,T)$.
\end{lem}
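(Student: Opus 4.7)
The plan is to use the exponential decay of the asymptotics of $\ga(\cdot,t)$ to the radial line $L_\pi$ as $u\downto 0$ (from Appendix \ref{app:expdec}) as the analytical backbone, and then repeat the computations of Lemmas \ref{th_psi_lem} and \ref{AH_eqs_lem} along the edge $\{v=0\}\times[0,T)$, checking that the boundary terms arising at $v=0$ vanish.

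First I would establish the continuous extension of $\psi$ to $[0,1]\times[0,T)$. Writing the end of $\ga(\cdot,t)$ near $u=0$ as a graph over the negative $x$-axis, the estimates of Appendix \ref{app:expdec} give that the slope of this graph, and hence $\tan\psi$ under our normalisation, decays to zero exponentially in $|x|$, uniformly on compact time intervals. Thus $\psi(u,t)\to 0$ as $u\downto 0$, and by embeddedness together with the given end direction at angle $\al$ we similarly obtain $\psi(u,t)\to\al$ as $u\upto 1$. Since $\Psi(v,w,t)=\psi(w,t)-\psi(v,t)$, this yields the continuous extension of $\Psi$ to $\overline\Si\times[0,T)$ with $\Psi(0,w,t)=\psi(w,t)$, $\Psi(0,0,t)=0$ and $\Psi(0,1,t)=\al$. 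The PDE $(\partial_t-\lap_w)\Psi(0,w,t)=0$ is then an immediate consequence of the equation $\psi_t=\lap_u\psi$ from Lemma \ref{th_psi_lem}.

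For $\ca$, the integrand $*(\ga_u\wedge\ga)$ in \eqref{swept_area} vanishes wherever $\ga$ and $\ga_u$ are parallel, and along the asymptotically radial end at angle $\pi$ the angle between $\ga$ and $\ga_u$ is exponentially small in $|\ga|$ while $|\ga|$ grows at most linearly in arc length; thus $*(\ga_u\wedge\ga)$ decays exponentially in arc length and the improper integral $\ca(0,w,t):=\lim_{v\downto 0}\ca(v,w,t)$ converges, extending $\ca$ continuously to $\overline\Si\times[0,T)$ with $\ca(0,0,t)=0$. To obtain the PDE for $\ca(0,w,t)$ we repeat the derivation of \eqref{ca_eq} starting the integration at $u=0$; the only extra ingredient is that the boundary term $*(\ga_t\wedge\ga)$ evaluated at $u=0$, which in arc length equals $\ka\langle\ga_u,\ga\rangle$ (using $\ga_t=\ka(*\ga_u)$), vanishes, since the exponential decay of $\ka$ dominates the at-most-linear growth of $\langle\ga_u,\ga\rangle$. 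This gives $\ca_t(0,w,t)=\lap_w\ca(0,w,t)+\Psi(0,w,t)$, and applying the same cancellation at both $v=0$ and $w=1$ yields $\frac{d}{dt}\ca(0,1,t)=\Psi(0,1,t)=\al$, so $\ca(0,1,t)=\ca(0,1,0)+\al t$.

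The four lines of \eqref{h_props} then follow by combining the above with $\h=\ca-t\Psi$. The main obstacle throughout is controlling the three boundary effects used in the plan: uniform convergence of $\psi$ at the endpoints, convergence of the improper integral defining $\ca(0,w,t)$, and the vanishing of the space-time boundary term $*(\ga_t\wedge\ga)$ at $u=0$. All three reduce to the quantitative exponential-decay estimates of Appendix \ref{app:expdec}; once these are in hand, the remainder of the argument is a routine extension of the computations in Section \ref{evol_eq_sect}.
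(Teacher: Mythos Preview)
Your proposal is correct and follows essentially the same route as the paper's own proof: invoke the exponential decay of Appendix~\ref{app:expdec} for the graphical ends, use it to extend $\psi$ (hence $\Psi$) continuously to the endpoints, use it to show $*(\ga_u\wedge\ga)$ is integrable down to $u=0$ so that $\ca$ extends, and then rerun the computation of Lemma~\ref{AH_eqs_lem} from $v=0$, observing that the boundary term $*[\ga_t\wedge\ga]$ vanishes at $u=0$ (and at $u=1$) by the decay. The only slip is a sign: $*(\ga_t\wedge\ga)=-\ka\langle\ga_u,\ga\rangle$, not $+\ka\langle\ga_u,\ga\rangle$, but this is immaterial since you are only using that it tends to zero.
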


Assuming Lemma \ref{boundary_lem}, keeping in mind the discussion of reparametrisation at the start of this section, an application of the maximum principle to $\h(0,w,t)$ immediately gives:

\begin{cor}
\label{h_bd_cor}
In the setting of Theorem \ref{main_new_harnack_thm}, we have
$$\ca_-\leq \h(0,w,t)\leq \ca_+$$
for all $w\in (0,1)$ and all $t\in [0,T)$.
\end{cor}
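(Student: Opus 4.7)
The plan is to apply the classical one-dimensional parabolic maximum principle directly to the function $w \mapsto \h(0,w,t)$, using the PDE and boundary conditions provided by Lemma \ref{boundary_lem}. Under the reductions outlined at the start of Section \ref{gen_H_pf}, and an additional rotation to place one radial end at angle $\pi$, we may assume without loss of generality that we are in precisely the setting of Lemma \ref{boundary_lem}, with $\ci = (0,1)$.

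First, I would fix any $t_0 \in (0,T)$ and apply the standard parabolic maximum principle to $\h(0,\cdot,\cdot)$ on the spatially bounded cylinder $[0,1] \times [0,t_0]$. By Lemma \ref{boundary_lem}, $\h(0,w,t)$ is continuous on $[0,1] \times [0, T)$ and satisfies the one-dimensional heat equation $\h_t = \lap_w \h$ on $(0,1) \times (0, t_0]$. Hence $\h(0,\cdot,\cdot)$ attains its maximum and minimum on $[0,1] \times [0,t_0]$ at a point of the parabolic boundary, which decomposes into the initial slice $[0,1] \times \{0\}$ and the two lateral edges $\{0\} \times [0,t_0]$ and $\{1\} \times [0,t_0]$.

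Next, I would verify that the parabolic boundary values all lie in $[\ca_-, \ca_+]$. From \eqref{h_props}:
\begin{itemize}
\item On the initial slice, $\h(0,w,0) = \ca(0,w,0)$, which is in $[\ca_-, \ca_+]$ by the hypothesis \eqref{harn_bds} (taking $v=0$).
\item On the lateral edge $w=0$, $\h(0,0,t) = 0$, which lies in $[\ca_-, \ca_+]$ because $\ca(v,v,0) = 0$ forces $\ca_- \leq 0 \leq \ca_+$.
\item On the lateral edge $w=1$, $\h(0,1,t) = \ca(0,1,0)$, which is a single fixed value in $[\ca_-, \ca_+]$ by \eqref{harn_bds}.
\end{itemize}
Applying the maximum principle to both $\h(0,w,t) - \ca_+$ and $\ca_- - \h(0,w,t)$ then yields $\ca_- \leq \h(0,w,t) \leq \ca_+$ on $[0,1] \times [0,t_0]$. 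Since $t_0 \in (0,T)$ was arbitrary, the bound holds on the full domain $[0,1] \times [0,T)$, which restricted to $w \in (0,1)$ is exactly the claim of the corollary.

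There is no real obstacle here: once Lemma \ref{boundary_lem} is in hand, the argument is a textbook application of the scalar maximum principle on a spatially bounded domain, and the only thing to check is that the parabolic boundary values are consistent with the desired bounds, which follows transparently from \eqref{harn_bds} together with $\ca_- \leq 0 \leq \ca_+$. The genuine work has all been invested in establishing the boundary conditions and the evolution equation in Lemma \ref{boundary_lem}.
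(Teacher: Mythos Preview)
Your proposal is correct and is exactly what the paper has in mind: the paper simply states that the corollary follows from Lemma \ref{boundary_lem} ``by an application of the maximum principle to $\h(0,w,t)$,'' and you have spelled out precisely that application, checking each piece of the parabolic boundary against \eqref{h_props} and \eqref{harn_bds}. The only minor remark is that ``$v=0$'' is a limiting value rather than a point of $\ci$, but this is handled by the continuous extension of $\ca$ to $\overline{\Sigma}\times[0,T)$ furnished by Lemma \ref{boundary_lem}.
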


\begin{proof}[Proof of Lemma \ref{boundary_lem}]
As discussed earlier, it is a property of curve shortening flows with initially radial ends within the half-lines $L_\pi$ and 
$L_{\al}$ that over compact time intervals $[0,T_0]$, (the ends of)
the ends can each continue to be written as graphs over (the ends of) $L_\pi$ and $L_{\al}$.
If we parametrise the end of $L_\pi$ or $L_{\al}$ using an arc-length parameter $x\in [0,\infty)$ and view the end of the curve as the graph of a function 
$y(x,t)$, then $y$ solves the graphical curve shortening flow equation
\beq
\label{GCSF}
y_t=\frac{y_{xx}}{1+y_x^2}\equiv (\arctan(y_x))_x.
\eeq
Lemma \ref{end_decay_lem} from the appendix then guarantees that $y$ and all its derivatives decay exponentially as $x\to\infty$, uniformly in time $t\in [0,T_0]$.

If we choose $\psi(u, 0)$ so that $\psi(u, 0) = 0$ for $u>0$ sufficiently close to $0$, 
then $\psi(u, 0) = \al$ for $u < 1$ sufficiently close to $1$.
It then follows from the exponential decay that the angle function $\psi(u,t)$ 
satisfies  $\psi(u,t)\to 0$ as $u\to 0$ and $\psi(u, t) \to \al$ as $u \to 1$, uniformly in $t\in [0,T_0]$.
This instantly gives the claimed extension and boundary behaviour for $\Psi$ in \eqref{Psi_props}.
Moreover, the claimed equation satisfied by $\Psi(0,w,t)=\psi(w,t)$ follows instantly from the equation satisfied by $\psi$ in Lemma \ref{th_psi_lem}.

Turning to $\ca$, we note that the exponential decay of $y(x)$ and its derivatives given by Lemma \ref{end_decay_lem} imply that the integrand $*(\ga_u\wedge \ga)$ of the swept area $\ca$ must decay exponentially in any arc-length parameter (or equivalently with respect to $x$) as we exit along either end.
Indeed, up to an ambient isometry, 
we could write each end of the curve as 
$\ga=(x,y(x))$, in which case $\ga_x=(1,y'(x))$ so 
$$|\ga_x\wedge\ga|=|y(x)-xy'(x)|,$$ 
which decays exponentially  as $x\to\infty$, uniformly in $t\in [0,T_0]$.

This immediately allows us to extend $\ca$ continuously to $\overline\Sigma \times [0,T)$.

We next wish to establish the equation  for $\ca(0,w,t)$ at an arbitrary time $\tau\in (0,T)$.
To do this, we take an arc-length parameter $u$ at time $t=\tau$ and  follow the proof of Lemma \ref{AH_eqs_lem} verbatim, but with $v$ replaced by $0$, computing always at time $t=\tau$. The exponential decay of $y$ and its derivatives along the ends allows us to differentiate under the integral sign in the definition of $\ca(0,w,t)$ to give
\beqa
\label{At2}
2\pl{}{t}\ca(0,w,t) &= *\int_{0}^w (\ga_{tu}\wedge\ga+\ga_u\wedge \ga_t) \, du\\
&= *\big[\ga_t\wedge\ga\big](w)+*\,2\int_{0}^w \ga_u\wedge \ga_t\, du,
\eeqa
replacing \eqref{At}.
But from \eqref{Aww} we have
$$2\ca_{ww} 
=*\big[\ga_t\wedge \ga\big](w),$$
and similarly to before we have
$$\Psi(0,w,\tau)=\int_0^w \ka(u)du=*\int_{0}^w \ga_u\wedge \ga_t\, du,$$
which implies the PDE of \eqref{A_props} at our arbitrary time $t=\tau$.

If we make the same simplification for $w$ as we did for $v$, 
replacing $w$ by $1$,
then the computation above simplifies
even more. We obtain
\beq
\pl{}{t}\ca(0,1,t) =*\int_{0}^1 \ga_u\wedge \ga_t\, du=\Psi(0,1,\tau)=\al,
\eeq
which integrates to give the final part of \eqref{A_props}.

The  properties of $\h$ claimed in \eqref{h_props} follow immediately from \eqref{Psi_props} and \eqref{A_props}.
\end{proof}

It remains to prove the main Harnack theorem, but 
before doing so we briefly comment on the symmetries of the swept area and turning function mentioned in the introduction.

\begin{rmk}\label{symmetry_rmk}
    Recall that both the swept area $\ca$ and turning function $\Psi$ of a curve $\ga : (0,1) \to \R^2$ \emph{negate} under reflections and under orientation-reversing reparametrisations.
    Specifically, 
    if $\bar\ga$ is the reflection of $\ga$ across any line through the origin (e.g.~the $x$--axis), and $\bar\ca$ and $\bar\Psi$ are the swept area and turning function of $\bar\ga$ respectively, 
    then $\bar\ca(v, w) = - \ca(v,w)$ and $\bar\Psi(v,w) = -\Psi(v,w)$,
    and if $\hat{\ga}$ is the backward parametrisation of $\ga$ given by $\hat\ga(u) = \ga(1-u)$, and $\hat\ca$ and $\hat\Psi$ are the swept area and turning function of $\hat\ga$ respectively, then $\hat\ca(v, w) = - \ca(1-w, 1-v)$ and $\hat\Psi(v, w) = - \Psi(1-w, 1-v)$.
    In particular, if we reflect \emph{and} reverse $\ga$ to get a new curve $\hat{\bar \ga} =: \widetilde\ga$, and let $\widetilde\ca$ and $\widetilde\Psi$ be the swept area and turning function of $\widetilde\ga$ respectively, then $\widetilde\ca(v, w) = \ca(1-w, 1-v)$ and $\widetilde\Psi(v,w ) = \Psi(1-w, 1-v)$.
    Geometrically, reflecting and reversing $\ga$ has the effect of interchanging the ends of $\ga$ while keeping the global properties the same.
\end{rmk}

\begin{proof}[Proof of Theorem \ref{main_new_harnack_thm}]
As discussed at the beginning of this section, we are free to reparametrise so that 
$\ci=(0,1)$ and $\ga_t = \vec{\ka}$.
This makes $\ca$, $\Psi$ and $\h$ functions of $\Si\times [0,T)$, where $\Si$ is defined in \eqref{Si_def}.
Because we are considering curve shortening flows with initially radial ends, by Lemma \ref{boundary_lem} the functions $\ca$, $\Psi$ and $\h$ extend continuously to
$\overline{\Si}\times [0,T)$. 

By Corollary \ref{h_bd_cor}, we know that 
$$\ca_-\leq \h(0,w,t)\leq \ca_+$$
for all $w \in (0,1)$ and $t \in [0,T)$, and by reflecting and reversing $\ga$ as discussed in Remark \ref{symmetry_rmk}, we can also deduce that
$$\ca_-\leq \h(v,1,t)\leq \ca_+$$
for all $v\in (0,1)$ and $t\in [0,T)$.

Trivially, we have that $\h(u,u,t)=0$ for all $u\in (0,1)$ and $t\in [0,T)$.
Combining all these facts implies that 
$$\ca_-\leq \h \leq \ca_+$$
throughout $\partial\Si\times [0,T)$, and also on $\overline{\Si}\times\{0\}$, i.e.~on the entire parabolic boundary of $\Si\times [0,T)$. 

This allows us to apply the maximum principle to the heat equation
\eqref{wedge_harnack_eq} governing the evolution of $\h$.
We deduce that \eqref{gen_Harn} holds throughout $\overline{\Si}\times [0,T)$ as required.
\end{proof}

\begin{proof}[{Proof of Theorem \ref{wedge_harnack_thm1}}]
By \eqref{ca_cv}, in the context of Theorem \ref{wedge_harnack_thm1}
we have $\cv(t)=\ca(0,\be,t)$. 
The formula \eqref{total_V} for the area then follows from \eqref{A_props} in Lemma \ref{boundary_lem} with $\al := \pi - \be$, after reparametrising as at the beginning of this section.

The main inequality \eqref{wh1} is a translation of \eqref{gen_Harn} of Theorem \ref{main_new_harnack_thm} in the special case of polar graphical flows, since $\ca_-=0$ and $\ca_+=\cv_0$.

Denote by $r_\be:(0,\beta)\to (0,\infty)$, parametrised as $r(\vph)$, the radial polar coordinate of the $\be$-wedge solution at time $t=1$.
The inequalities of \eqref{wh2pi} follow because 
$$0\leq \cv(\vph_0,\vph_1,t)\leq \cv(0,\beta,t)=\cv_0$$
in the case $\be=\pi$.
The inequalities of \eqref{wh2} follow because
$r(\vph,t)\geq r_\be(\vph,t):=t^\half r_\be(\vph)$ by the comparison principle,
and so on the one hand
$$\cv(\vph_0,\vph_1,t)- t\cv_\be(\vph_0,\vph_1)\geq 0,$$
and on the other hand
$$\cv(\vph_0,\vph_1,t)- t\cv_\be(\vph_0,\vph_1)
\leq \cv(0,\be,t)- t\cv_\be(0,\be)
=[\cv_0+(\pi-\be)t]-(\pi-\be)t=\cv_0.$$
By \eqref{wasL71} this implies that
$$0\leq \cv(\vph_0,\vph_1,t)- t\Psi_\be(\vph_0, \vph_1)\leq \cv_0,$$
which can be added to the negation of  \eqref{wh1} to give \eqref{wh2}.
\end{proof}

\section{Estimates on when solutions of curve shortening flow with radial ends become graphical}
\label{arjun_sect}

In this section we prove Theorem \ref{ca_control_thm}, and use it to deduce Theorem \ref{graphical_cor}.

Theorem \ref{ca_control_thm} will follow by applying a maximum principle argument (similar to that in the previous section), not to \eqref{wedge_harnack_eq} as in the proof of Theorem \ref{main_new_harnack_thm}, but instead to the evolution equation 
$$ \ca_t = \Delta_v \ca + \Delta_w \ca + \Psi(v, w, t)$$
 for the swept area $\ca$. 
As mentioned in the introduction, the difficulty when proving Theorem \ref{ca_control_thm}  is 
to control the forcing term $\Psi(v,w,t)$ above, which is finite but may be arbitrarily large for arbitrary $(v,w,t)$ if $\ga(\cdot,t)$ spirals many times as in Figure \ref{fern_fig}.
Consequently, a naive application of the maximum principle will not give us any usable information.

To obtain the sharp estimate for the swept area, Theorem  \ref{ca_control_thm}, which we require to deduce Theorem \ref{graphical_cor} from Theorem \ref{main_new_harnack_thm}, we need to understand the behaviour of $\Psi$ at extremal points of $\ca$.
This is recorded in the following two lemmas:

\begin{lem}\label{lem:turnangbdd}
Let $\gamma : (0,1) \times [0,T) \to \R^2$ be a curve shortening flow in the setting of Lemma \ref{boundary_lem}, but with $\al \in  [0, \pi)$, and extend the turning function $\Psi$ and the swept area $\ca$ continuously to $\overline\Sigma \times [0, T)$, as described there.
Then for any $t \in (0, T)$,
\begin{enumerate}[(i)]
\item  if $(v_*, w_*) \in \overline\Sigma$ is such that $$\ca(v_*, w_*, t) = \sup_{(v,w) \in \Sigma} \ca(v,w,t),$$ then
\beq
\Psi(v_*, w_*, t)  \leq \al . \label{turnupbdd}
\eeq
\item if $(v_*, w_*) \in \overline\Sigma$ is such that $$\ca(v_*, w_*, t) = \inf_{(v,w) \in \Sigma} \ca(v,w,t),$$ then
\beq \Psi(v_*, w_*, t) \geq 0 . \label{turnlowbdd} \eeq
\end{enumerate}
\end{lem}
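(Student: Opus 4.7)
The plan is to combine first- and second-order conditions at an extremum of $\ca(\cdot,\cdot,t)$ on the compact set $\overline\Sigma$ with a global geometric (Gauss--Bonnet) argument. First I would reduce as in the proof of Theorem \ref{main_new_harnack_thm} to $\ci=(0,1)$ and $\ga_t=\vec{\ka}$, and introduce polar coordinates $\ga(u,t)=r(u,t)e^{i\theta(u,t)}$. A short computation gives $*(\ga_u\wedge\ga)=-r^2\theta'$, so
$$
\ca_v(v,w,t) \;=\; \tfrac12 r(v,t)^2 \theta'(v,t), \qquad \ca_w(v,w,t) \;=\; -\tfrac12 r(w,t)^2 \theta'(w,t).
$$
At an interior critical point $(v_*,w_*)\in\Sigma$ therefore $\theta'(v_*,t)=\theta'(w_*,t)=0$, i.e.~the tangent to $\ga(\cdot,t)$ is parallel to the position vector at both $\ga(v_*,t)$ and $\ga(w_*,t)$. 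Since $\ca_{vw}\equiv 0$, the second-order conditions at a supremum give $\theta''(v_*,t)\leq 0$ and $\theta''(w_*,t)\geq 0$, so $\theta$ has a local maximum at $v_*$ and a local minimum at $w_*$; the inequalities reverse at an infimum.

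For the interior supremum case I would close off $\ga([v_*,w_*],t)$ with the two radial segments from $\ga(w_*,t)$ to $0$ and from $0$ to $\ga(v_*,t)$, producing a piecewise-smooth closed curve $\Gamma$. Since radial segments contribute zero to $\tfrac12\oint *(d\ga\wedge\ga)$, the signed area enclosed by $\Gamma$ equals $\ca(v_*,w_*,t)$. The exterior angles at the corners $\ga(v_*,t)$ and $\ga(w_*,t)$ lie in $\{0,\pm\pi\}$ by radiality, while at the corner $0$ the exterior angle is $\theta(v_*,t)-\theta(w_*,t)-\pi$ reduced modulo $2\pi$ to $(-\pi,\pi]$. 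Applying Gauss--Bonnet to $\Gamma$ then gives
$$ \Psi(v_*,w_*,t) \;=\; 2\pi n \;-\; (\text{sum of the three exterior angles}), $$
where $n$ is the rotation index of $\Gamma$. The sign $\ca(v_*,w_*,t)\geq 0$ at a supremum selects the admissible rotation index, and combined with the normalisation $\psi(0,t)=0$, $\psi(1,t)=\al$ (which uses the embeddedness of $\ga(\cdot,t)$) yields $\Psi(v_*,w_*,t)\leq\al$. The infimum case is parallel, with $\ca(v_*,w_*,t)\leq 0$ selecting the reversed rotation index and producing $\Psi(v_*,w_*,t)\geq 0$.

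The boundary of $\overline\Sigma$ needs separate handling. On the diagonal $\{v=w\}$ we have $\Psi(u,u,t)=0\in[0,\al]$, so both conclusions hold trivially; the corners $(0,0)$ and $(1,1)$ reduce to this, and the corner $(0,1)$ gives the equality cases $\Psi(0,1,t)=\al$. On the open edge $v=0$, the first-order condition $\ca_v(0,w_*,t)=0$ holds automatically from the exponential decay of the $L_\pi$-end given by Lemma \ref{end_decay_lem}, while maximality in $w$ forces $\theta'(w_*,t)=0$; the Gauss--Bonnet argument then applies with the end of $\ga([0,w_*],t)$ asymptotic to $L_\pi$ playing the role of the degenerate radial segment, obtained as the limit of the segment from $\ga(v,t)$ to $0$ as $v\downto 0$. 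The edge $w=1$ follows by the reflect-and-reverse symmetry of Remark \ref{symmetry_rmk}. The main obstacle I expect is the Gauss--Bonnet case analysis: exotic rotation indices $|n|>1$ for $\Gamma$ are a priori possible if $\ga([v_*,w_*],t)$ spirals many times around the origin before closing, and excluding these will require the second-order conditions at the extremum together with the embeddedness of $\ga(\cdot,t)$ and its prescribed total turning $\al$, combined with the additivity $\ca(v,w,t)=\ca(v_*,w_*,t)+\ca(v,v_*,t)+\ca(w_*,w,t)$ to promote local extremality into the global information needed.
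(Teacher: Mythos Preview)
Your first-order analysis is correct and matches the paper: at an interior extremum the tangent to $\ga(\cdot,t)$ is radial at both $\ga(v_*,t)$ and $\ga(w_*,t)$. But from that point on there is a genuine gap, which you yourself identify at the end. The closed curve $\Gamma$ obtained by adjoining radial segments need not be simple --- $\ga([v_*,w_*],t)$ may cross those segments many times, and may cross the origin --- so the Umlaufsatz gives $\Psi(v_*,w_*,t)=2\pi n-\sum(\text{exterior angles})$ with an \emph{undetermined} integer $n$. Your suggestion that ``the sign $\ca(v_*,w_*,t)\geq 0$ selects the admissible rotation index'' does not work as stated: for a non-simple $\Gamma$ the swept area is a winding-number-weighted enclosed area and can be positive with $|n|$ arbitrarily large. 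The second-derivative information $\theta''(v_*)\leq 0$, $\theta''(w_*)\geq 0$ is also too weak in the degenerate case $\theta''=0$, and the case $\ga(v_*)=0$ or $\ga(w_*)=0$ breaks your polar-coordinate computation entirely.

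The paper's route avoids the rotation-index ambiguity by never applying Gauss--Bonnet to the full arc. Instead it introduces \emph{winding functions} $\th_c(u)$ measuring the continuous angle of the chord $\ga(u)-\ga(c)$ relative to $\Tau(c)$, and proves the identity $\Psi(c,d)=\th_c(d)-\th_d(c)$ (Lemma~\ref{lem:dC}); this replaces one global rotation index by two \emph{pointwise} angles. The crucial step (Lemma~\ref{lem:windingconstraints}) bounds $\th_{v_*}(w_*)$ and $\th_{w_*}(v_*)$ separately: assuming, say, $\th_{v_*}$ first reaches $\pi$ at some $v_2\in(v_*,w_*]$, one closes the sub-arc $\ga([v_*,v_2])$ with a radial segment, and the swept area of that (possibly non-simple) loop is shown to be strictly negative via a lift to the universal cover of $\C\setminus\{p_*\}$, contradicting $\ca(v_*,v_2)\geq 0$ from maximality. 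The degenerate second-derivative case is handled not by calculus but by Angenent's crossing principle, which forces $\ga$ to lie on a definite side of the tangent line near the extremal point. Finally, a further case split (Claims~\ref{cl:voutfor} and~\ref{cl:vinback}) according to whether $\Tau$ points inward or outward at $v_*$ and $w_*$ combines these winding bounds with the boundary data $\Psi(0,1,t)=\al$ to reach $\Psi(v_*,w_*,t)\leq\max\{0,\al\}$. Your outline contains none of this machinery, and it is precisely what is needed to close the ``exotic rotation index'' gap you anticipate.
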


\begin{lem}\label{lem:turnangbddbdry}
In the setting of Lemma \ref{lem:turnangbdd}, 
for any $t  \in (0, T)$,
\begin{enumerate}[(i)]
\item  if $w_* \in [0, 1]$ is such that $$\ca(0, w_*, t) = \sup_{w \in (0,1)} \ca(0,w,t),$$ then
\beq
\Psi(0, w_*, t)  \leq \al . \label{turnupbddtop}
\eeq
\item  if $w_* \in [0,1]$ is such that $$\ca(0, w_*, t) = \inf_{w \in (0,1)} \ca(0,w,t),$$ then
\beq \Psi(0, w_*, t) \geq 0 . \label{turnlowbddtop} \eeq
\end{enumerate}
\end{lem}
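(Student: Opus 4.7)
The plan is to regard Lemma \ref{lem:turnangbddbdry} as the one-dimensional boundary analogue of Lemma \ref{lem:turnangbdd}, obtained by restricting $\ca$ and $\Psi$ to the edge $\{v=0\}\times[0,1]$ of $\overline{\Sigma}$. The structural input is Lemma \ref{boundary_lem}, which supplies the boundary values $\Psi(0,0,t)=0$ and $\Psi(0,1,t)=\al$, and the heat equation
$$\left(\pt-\lap_w\right)\ca(0,w,t)=\Psi(0,w,t),\qquad w\in(0,1).$$

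First I would dispatch the corner cases $w_*\in\{0,1\}$. By Lemma \ref{boundary_lem}, $\Psi(0,0,t)=0$ and $\Psi(0,1,t)=\al$. Since $\al\in[0,\pi)$, both values lie in $[0,\al]$, which simultaneously settles parts (i) and (ii) at the corners.

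For interior $w_*\in(0,1)$, I would adapt the proof of Lemma \ref{lem:turnangbdd} to the one-dimensional edge. The first-order condition at the extremum reads
$$\ca_w(0,w_*,t)=\tfrac{1}{2}\,{*}(\ga_u\wedge\ga)(w_*,t)=0,$$
equivalently, $\Tau(w_*,t)\parallel\ga(w_*,t)$, so that the tangent to $\ga(\cdot,t)$ at $u=w_*$ passes through the origin. The second-order condition is $\ca_{ww}(0,w_*,t)\leq 0$ in case (i) and $\geq 0$ in case (ii). Feeding these pointwise data into the heat equation above gives
$$\ca_t(0,w_*,t)=\ca_{ww}(0,w_*,t)+\Psi(0,w_*,t),$$
yielding $\ca_t\leq\Psi$ in case (i) and $\ca_t\geq\Psi$ in case (ii). The sharp directional bound $\Psi\leq\al$ (resp.~$\Psi\geq 0$) is then extracted by comparing with the explicitly linear boundary value $\ca(0,1,t)=\ca(0,1,0)+\al t$ from Lemma \ref{boundary_lem} and using that $w_*$ is a \emph{global} (rather than merely local) extremum of $\ca(0,\cdot,t)$ on $[0,1]$, exactly paralleling the extremum-tracking step in Lemma \ref{lem:turnangbdd}.

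The main obstacle is the interior case: the pointwise maximum-principle inequality at $(0,w_*,t)$ alone does not produce the directional bound on $\Psi$, since it only compares $\ca_t$ with $\Psi$ in the \emph{same} direction as $\ca_{ww}\lessgtr 0$. The argument must therefore couple this local information with the prescribed edge boundary data $\Psi(0,0,t)=0$ and $\Psi(0,1,t)=\al$, together with the global-sup property of $w_*$, in the same way as in Lemma \ref{lem:turnangbdd}.
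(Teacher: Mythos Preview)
Your handling of the corner cases $w_*\in\{0,1\}$ is fine. The interior case, however, has a genuine gap that you yourself flag as ``the main obstacle'' but do not actually resolve.

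At an interior maximum $w_*$ in case (i), the second-derivative test and the boundary heat equation give
\[
\ca_t(0,w_*,t)=\ca_{ww}(0,w_*,t)+\Psi(0,w_*,t)\leq \Psi(0,w_*,t),
\]
i.e.\ an \emph{upper} bound on $\ca_t$ by $\Psi$. To conclude $\Psi\leq\al$ you would need the opposite direction. Your suggestion to ``compare with the explicitly linear boundary value $\ca(0,1,t)=\ca(0,1,0)+\al t$'' does not help: the global-max property gives $\ca(0,w_*,t)\geq \ca(0,1,t)$, and tracking the maximum in time yields at best $\frac{d}{dt}\sup_w\ca(0,w,t)\leq \Psi(0,w_*,t)$, again the wrong way round. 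The same issue appears in case (ii) with all inequalities reversed. Your appeal to ``the extremum-tracking step in Lemma \ref{lem:turnangbdd}'' is circular: in the paper Lemmas \ref{lem:turnangbdd} and \ref{lem:turnangbddbdry} are proved \emph{together} via a common reduction (Lemma \ref{lem:turnangbddred}), and neither has a PDE proof of the kind you outline.

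The paper's route is entirely different and genuinely geometric. The first-order condition $\ca_w(0,w_*,t)=0$ is used only to say that the tangent line at $\ga(w_*,t)$ passes through the origin. From there the argument abandons the PDE and instead controls the \emph{winding function} $\th_{w_*}$ of $\ga(\cdot,t)$ about the point $\ga(w_*,t)$: the extremality of $\ca$ is converted, via signed-area arguments and Angenent's intersection principle, into one-sided bounds on $\th_{w_*}$ (Lemma \ref{lem:windingconstraints}), and these are fed into the identity $\Psi(c,d)=\th_c(d)-\th_d(c)$ (Lemma \ref{lem:dC} and Corollary \ref{cor:dC}) to produce $\Psi(0,w_*,t)\leq\max\{0,\al\}=\al$. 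The embeddedness of $\ga(\cdot,t)$ is essential here; it is precisely the ingredient your purely PDE-based approach lacks, which is why the inequality cannot be extracted from the heat equation and boundary data alone.
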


\subsection{Proofs of Theorems \ref{ca_control_thm} and \ref{graphical_cor} assuming Lemmas \ref{lem:turnangbdd} and \ref{lem:turnangbddbdry}}

Before proving Lemmas \ref{lem:turnangbdd} and \ref{lem:turnangbddbdry}, we use them
to prove  Theorems \ref{ca_control_thm} and \ref{graphical_cor}.

\begin{proof}[Proof of Theorem \ref{ca_control_thm}]
As in the proof of Theorem \ref{main_new_harnack_thm}, we may assume without loss of generality that $\ci = (0,1)$ and $\ga_t = \vec{\kappa}$, and that the turning function $\Psi$ and swept area $\ca$ extend continuously to $\overline \Sigma \times [0, T)$.
We wish to apply the maximum principle to the equation \eqref{ca_eq} for $\ca$ over $\Sigma \times [0, T)$, but to do so we first need to understand how $\ca$ behaves on $\partial \Sigma \times [0, T)$. 

Let $\ep > 0$, and define 
$$ \ca^\ep(v, w, t) := \ca(v, w, t) - \al t - \ep(1 + t). $$
Then 
\beql{Atime0}
\ca^\ep(v, w, 0) = \ca(v, w, 0) - \ep \leq \ca_+ - \ep
\eeq
and, from \eqref{A_props}, we compute that $\ca^\ep(0, w, t)$ satisfies
\begin{equation}
\label{A^ep_props}
\left\{
\begin{aligned}
\left(\pl{}{t}-\lap_w\right)\ca^\ep(0,w,t) &= 
\Psi(0,w,t) - \al - \ep &\quad w\in (0,1)\\
\ca^\ep(0,0,t) &= -\al t - \ep(1 + t) \leq -\ep \leq \ca_+ - \ep &\\
\ca^\ep(0,1,t)&= \ca(0,1,0) - \ep(1+t) \leq \ca_+ - \ep&
\end{aligned}
\right.
\end{equation}
for all  $t \in (0, T)$.
Suppose for a contradiction that there exists a first time  $t_* \in [0, T)$ for which 
\beql{t*timemax} 
\sup_{w \in (0,1)} \ca^\ep(0, w, t_*) \geq \ca_+, 
\eeq
and let $w_* \in [0,1]$ be such that
\begin{equation*} 
 \ca^\ep(0, w_*, t_*) = \sup_{w \in (0,1)} \ca^\ep(0, w, t_*).
 \end{equation*}
Then \eqref{Atime0} and \eqref{A^ep_props} guarantee that, respectively, $t_* > 0$ and $0 < w_* < 1$. 
As $\ca(0, w, t_*)$ and $\ca^\ep(0, w, t_*)$ differ by a constant which is independent of $w$, we have that $w_*$ is also a maximum for $w \mapsto \ca(0, w, t_*)$. 
Using (i) from Lemma \ref{lem:turnangbddbdry} in \eqref{A^ep_props}, we deduce that
$$
\left(\pl{}{t}-\lap_w\right)\ca^\ep(0,w_*,t_*)\leq  \al - \al - \ep = - \ep < 0.
$$
Conversely, since $t_* > 0$ is the first time at which \eqref{t*timemax} holds, and since $w_*$ is an interior maximum for $w \mapsto \ca^\ep(0, w, t_*)$, we also deduce that
$$
\left(\pl{}{t}-\lap_w\right)\ca^\ep(0,w_*,t_*) \geq  0. 
$$
These two inequalities are clearly contradictory.
Hence \eqref{t*timemax} could not have held, and in fact
$$\ca^\ep (0, w, t) \leq \ca_+$$
for all $w \in (0, 1)$ and all $t \in[0, T)$.
Taking $\ep \downto 0$ and rearranging gives
\beql{bdry_upbdd} \ca(0, w, t) \leq \al t + \ca_+ . \eeq
%
Similarly, 
\beql{bdry_lowbdd} \ca(0, w, t) \geq \ca_- \eeq
for all $w \in (0,1)$ and all $t \in [0, T)$, this time using (ii) from Lemma \ref{lem:turnangbddbdry}.

By reflecting and reversing $\ga$ (cf.~Remark \ref{symmetry_rmk}), 
we deduce that
\beql{bdry_v1bdd} \ca_- \leq \ca(v, 1, t) \leq \al t + \ca_+  \eeq
for all $v \in (0,1)$ and all $t \in [0, T)$.

Taking the bounds  \eqref{bdry_upbdd}, \eqref{bdry_lowbdd} and \eqref{bdry_v1bdd}  together with \eqref{harn_bds} and the trivial observation that $\ca(u,u,t) = 0$ for all $u \in [0,1]$ gives that 
\beql{Abdrycntrl}
\ca_- \leq \ca(v, w, t) \leq \al t + \ca_+ 
\eeq
for all $(v, w, t)$ with $(v, w) \in \partial \Sigma$ or $t = 0$.

We can now apply the same maximum principle argument to \eqref{ca_eq} to deduce control on all of $\Si \times [0, T)$.
For an arbitrary $\ep > 0$, we have from
 \eqref{ca_eq} that
$$ \ca^\ep_t - \Delta_v \ca^\ep - \Delta_w \ca^\ep = \Psi(v, w, t) - \al - \ep$$
for all $(v, w) \in \Sigma$ and $t \in (0, T)$,
and from \eqref{Abdrycntrl} that
$$ \ca^\ep(v, w, t) \leq \ca_+ - \ep(1+t) \leq \ca_+ - \ep$$
whenever $(v, w, t)$ lies on the parabolic boundary of $\Si \times [0, T)$.

Suppose there exists a first time $t_* \in (0, T)$ for which $\ca^\ep(\cdot, \cdot, t_*) \geq \ca_+$, and let $(v_*, w_*) \in \Sigma$ be a maximum for $\ca^\ep(\cdot, \cdot, t_*)$. 
As before, $(v_*, w_*)$ is also a maximum for $\ca(\cdot, \cdot, t_*)$, and hence by (i) of Lemma \ref{lem:turnangbdd} we have
$$ \Psi(v_*, w_*, t_*) \leq \al .$$
Therefore
$$ \ca^\ep_t - \Delta_v \ca^\ep - \Delta_w \ca^\ep \leq - \ep  $$
at $(v_*, w_*, t_*)$. 
However, we also have 
$$ \ca^\ep_t - \Delta_v \ca^\ep - \Delta_w \ca^\ep \geq 0$$
by virtue of $(v_*,w_*,t_*)$ being a first space-time maximum for $\ca^\ep$.
These two inequalities are clearly contradictory, so our supposition could not have held, i.e.~we must have had
$$\ca^\ep(v, w, t) \leq \ca_+$$
for all $(v, w, t ) \in \Sigma \times [0 , T)$.
Taking $\ep \downto 0$ and rearranging gives the second inequality of \eqref{ca_control_thm_equation}. 
The argument for the first inequality of \eqref{ca_control_thm_equation} is similar, but uses (ii) of Lemma \ref{lem:turnangbdd} in place of (i).
\end{proof}

\begin{proof}[Proof of Theorem \ref{graphical_cor}]
As before, we may assume without loss of generality that $\ci = (0,1)$  and $\ga_t = \vec{\ka}$, and by Lemma \ref{boundary_lem}, that  $\Psi$, $\ca$ and $\h$ extend continuously to $\overline \Sigma \times [0, T)$. 
In particular, we may take $\psi(0, t) = - \frac\pi2 + \frac\be2$ and $\psi(1,t) = \frac\pi2 - \frac\be2$.

From Theorem \ref{main_new_harnack_thm} we have that 
$$
\ca_- \leq \h(v,w,t) = \ca(v,w,t) - t\Psi(v,w,t) \leq \ca_+,
$$
which rearranges to 
$$
 - \frac{\ca_+ - \ca(v,w,t)}{t} \leq \Psi(v,w,t) \leq \frac{\ca(v,w,t) - \ca_-}{t}
$$
for all $(v,w) \in \Sigma$ and all $t \in (0, T)$.
Applying Theorem \ref{ca_control_thm} with $\al : = \pi - \be \geq 0$, we deduce that
\beql{Psibdd2}
- \frac{\ca_+ - \ca_-}{t} \leq \Psi(v,w,t) \leq 
[\pi - \be] + \frac{\ca_+ - \ca_-}t
\eeq
 for all $(v, w) \in \overline{\Sigma}$.
Applying \eqref{Psibdd2} with $v = 0$ and $w = u$, keeping in mind $\psi(0,t) = -\frac\pi2 + \frac\be2$, gives after rearranging that
\beql{Tauthres}
-\psi_\text{max}(t): = -\left[\frac\pi2 - \frac\be2 + \frac{\ca_+ - \ca_-}{t} \right] \leq \psi(u,t) \leq \frac\pi2-\frac\be2 + \frac{\ca_+ - \ca_-}{t} = \psi_\text{max}(t)
\eeq
for all $u \in (0,1)$ and $t \in (0, T)$, as illustrated by the solid red arc in Figure \ref{fig:angleconstraint}.

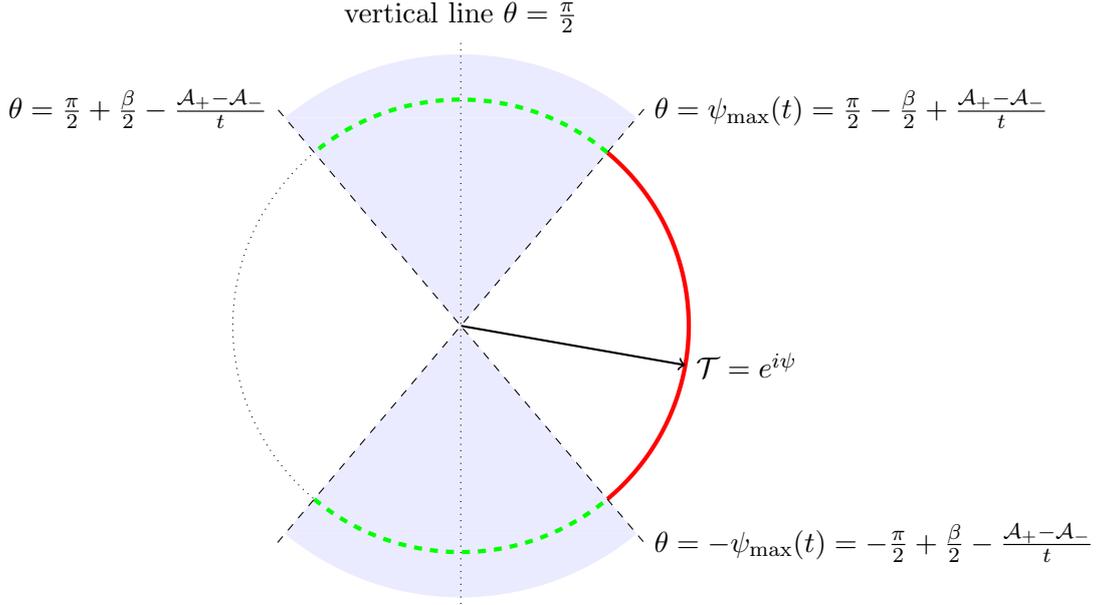
\begin{figure}
\centering
\begin{tikzpicture}[scale=1.5, rotate=-30]
\draw[dotted, samples=360,domain=0:360] plot ({2*cos(\x)}, {2*sin(\x)}); 
\draw[ultra thick, red, samples=100,domain=-20:80] plot ({2*cos(\x)}, {2*sin(\x)}); 
\draw[thick, ->] (0,0) to (2*1*.9397, 2*1*0.3420) node[right]{$\Tau = e^{i\psi}$};
\fill[blue!8, samples=80,domain=80:160] plot ({2.4*cos(\x)}, {2.4*sin(\x)}); 
\fill[blue!8] (2.4*0.1736, {2.4*0.9848}) to (2.4*-0.9397, 2.4*0.3420) to (0,0) ; 
\fill[blue!8, samples=80,domain=-100:-20] plot ({2.4*cos(\x)}, {2.4*sin(\x)}); 
\fill[blue!8] (2.4*-0.1736, {2.4*-0.9848}) to (2.4*0.9397, 2.4*-0.3420) to (0,0) ; 
\draw[ultra thick, dashed, green, samples=80,domain=80:160] plot ({2*cos(\x)}, {2*sin(\x)}); 
\draw[dashed] (2.5*0.1736, {2.5*0.9848}) node[right]{$\th = \psi_\text{max}(t) = \tfrac\pi2 - \tfrac\be2 + \tfrac{\ca_+-\ca_-}t $} to (0,0) to (2.5*-0.9397, 2.5*0.3420) node[left]{$\textstyle \th = \frac\pi2 + \frac\be2 - \frac{\ca_+ - \ca_-}{t}$}; 
\draw[ultra thick, dashed, green, samples=80,domain=-100:-20] plot ({2*cos(\x)}, {2*sin(\x)}); 
\draw[dashed] (2.5*-0.1736, {2.5*-0.9848}) to (0,0) to (2.5*0.9397, 2.5*-0.3420)  node[right]{$\th = -\psi_\text{max}(t) = -\tfrac\pi2 + \tfrac\be2 - \tfrac{\ca_+-\ca_-}t $}; 
\draw[dotted] (2.5*-.5, 2.5*0.8660) node[above]{vertical line $\th =\tfrac\pi2$} to (-2.5*-.5, -2.5*0.8660) ;
\end{tikzpicture}
\caption{The solid red arc illustrates the values that $\Tau$ can take in $S^1 \subset \C$ at a time $t > \frac{2(\ca_+ - \ca_-)}\be$, and consequently the shaded blue double sector  illustrates the region in which $\ga(\cdot, t)$ is polar graphical. 
}
\label{fig:angleconstraint}
\end{figure}

Once $t > \frac{2(\ca_+ - \ca_-)}\be$, that is, once $\frac{\ca_+ - \ca_-}{t} < \frac\be2$, we have that 
$$ \psi_\text{max}(t) =  \frac\pi2 - \frac\be2 + \frac{\ca_+-\ca_-}{t} < \frac\pi2.$$
Therefore $$ -\frac\pi2 <  \psi(u, t) < \frac\pi2$$ for all $u \in (0,1)$, implying that  $\ga(\cdot, t)$ must cross any vertical line (dotted in Figure \ref{fig:angleconstraint}) transversally and precisely once, and consequently that we can write $\ga(\cdot, t)$ as a graph of a function $y(\cdot, t)$ over the $x$--axis.
Moreover, we have the relationship 
$$ \arctan(y_x(x, t)) = \psi(u, t), $$
where $\ga(u, t) = (x, y(x, t))$, which when combined with \eqref{Tauthres} gives \eqref{graphbdd}.

Now fix $t > \frac{2(\ca_+ - \ca_-)}\be$. 
Intuitively, the reason for the polar graphicality is the same as the reason for the graphicality over the $x$--axis: \eqref{Tauthres} is saying, loosely, that $\Tau$ is `looking horizontal', and so if we have a line which `looks vertical' and is crossed by $\ga(\cdot, t)$, then the crossing must be transversal.

For graphicality over the $x$--axis, we needed to check whether $\ga(\cdot,t )$ crossed the vertical line $x + \{ re^{\frac{i\pi}2} : r \in \R \}$ for any $x \in \R$ transversely. 
For polar graphicality, we need to check that $\ga(\cdot , t)$ crosses $\{ re^{i\th} : r \in \R \}$ transversely as $\theta$ ranges over some interval.
We claim that the maximal interval is
$$ \s =\left( \frac\pi2 - \frac\be2 + \frac{\ca_+-\ca_-}{t}, \frac\pi2 + \frac\be2 -\frac{\ca_+-\ca_-}{t}\right).$$
This can be easily seen from inspecting Figure \ref{fig:angleconstraint}, as 
$$ \left\{ r e^{i\th} : r \in \R, \, \th \in \s \right\} $$ 
is exactly the maximal double sector (illustrated as the shaded blue cone over the pair of dashed green arcs) in which $\Tau$ cannot lie.
\end{proof}

\subsection{Ideas and ingredients for the proofs of Lemmas \ref{lem:turnangbdd} and \ref{lem:turnangbddbdry}}\label{sec:ideasingredients}

On a curve $\ga$ in $\R^2$ we will need to keep track of the angle between the tangent $\Tau(v)$ and the chord connecting 
$\ga(v)$ with other points $\ga(u)$.
\begin{defn}\label{def:incfunc}
Let $\ga : \ci \to \C \simeq \R^2$ be an embedded smooth curve, where $\ci \subseteq \R$ is an interval, and let $v \in \ci$.
We define the \emph{winding function $\th_v : \ci \to \R$ of $\ga$ (based at $v$)} to be the continuous function given by
\beql{incfuncform}
e^{i\th_v(u)} \Tau(v) = 
\begin{cases} \displaystyle
\frac{\ga(u) - \ga(v)}{|\ga(u) - \ga(v)|} &\text{for } \, u > v \\
\displaystyle-\frac{\ga(u) - \ga(v)}{|\ga(u) - \ga(v)|} &\text{for } \, u < v 
\end{cases}
\eeq
with $\th_v(v) =0 $, where we view the tangent vector $\Tau(v) \in S^1 \subset \C$. 
\end{defn}

Our winding functions negate under reflections, and are \emph{invariant} under orientation-reversing reparametrisations such as the one given in Remark \ref{symmetry_rmk}.

According to Lemmas \ref{lem:turnangbdd} and \ref{lem:turnangbddbdry}, 
our objective is to control the turning function $\Psi$, and that will be achieved via the following lemma.

\begin{lem}\label{lem:dC}
Let $\gamma : [c,d] \to \R^2$ be an embedded smooth arc. 
Let $\Psi$ be the turning function of $\gamma$, cf.~Definition \ref{Psi_def}, and let $\th_c$ and $\th_d$ be winding functions of $\ga$ based at $c$ and $d$ respectively, cf.~Definition \ref{def:incfunc}.  
Then
\beql{dC} \Psi(c,d) = \th_c(d) - \th_d(c). \eeq
\end{lem}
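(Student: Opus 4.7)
The plan is to verify the identity modulo $2\pi$ directly from the definitions, by observing that both sides compute the argument of the common chord vector $\ga(d)-\ga(c)$, and then to eliminate the $2\pi$ ambiguity by a continuity argument anchored at the diagonal $c=d$.

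Identifying $\R^2\simeq\C$ and writing $\Tau(v)=e^{i\psi(v)}$ for the tangent angle $\psi$ of Definition \ref{Psi_def}, I would apply the defining relation \eqref{incfuncform} twice: once with basepoint $v=c$ and $u=d>v$, and once with basepoint $v=d$ and $u=c<v$ (so that the sign flip in \eqref{incfuncform} converts $\ga(u)-\ga(v)$ into $\ga(d)-\ga(c)$). Both give the same right-hand side, so
\[
\frac{\ga(d)-\ga(c)}{|\ga(d)-\ga(c)|}
= e^{i(\psi(c)+\th_c(d))}
= e^{i(\psi(d)+\th_d(c))}.
\]
Comparing arguments and recalling $\Psi(c,d)=\psi(d)-\psi(c)$ yields $\Psi(c,d)-(\th_c(d)-\th_d(c))\in 2\pi\Z$.

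To promote this to equality, I would introduce $F:[c,d]\to\R$ by $F(s):=\Psi(c,s)-\th_c(s)+\th_s(c)$ for $s\in(c,d]$, extended by $F(c):=0$. Rerunning the previous step with $d$ replaced by any $s\in(c,d]$ shows $F(s)\in 2\pi\Z$ on $(c,d]$, so once $F$ is known to be continuous on $[c,d]$ it must be identically zero, giving $F(d)=0$, which is the claim.

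The one non-routine ingredient I expect is the continuity of $F$, specifically of the term $s\mapsto\th_s(c)$, in which the basepoint of the winding function varies. Continuity of $s\mapsto\Psi(c,s)$ and $s\mapsto\th_c(s)$, along with their vanishing as $s\downto c$, are immediate from the continuous-lift definitions together with the expansion $\ga(s)-\ga(c)=(s-c)\Tau(c)+o(s-c)$. For $\th_s(c)$, my plan is to exploit the fact that the open half-plane $\{(s,u)\in\R^2:s>u\}$ is simply connected: the continuous $S^1$-valued map
\[
(s,u)\longmapsto -\,e^{-i\psi(s)}\cdot\frac{\ga(u)-\ga(s)}{|\ga(u)-\ga(s)|}
\]
therefore admits a continuous real-valued lift $\Theta$ on that region, and the same first-order expansion shows this $S^1$-valued map extends continuously to the diagonal with value $1$, so $\Theta$ extends continuously by $0$ along the diagonal. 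On each horizontal slice $\{s\}\times\{u:u<s\}$, the restriction $\Theta(s,\cdot)$ is the unique continuous lift vanishing at the diagonal endpoint, which is precisely the defining property of $\th_s(\cdot)$; hence $s\mapsto\th_s(c)=\Theta(s,c)$ is continuous on $(c,d]$ and extends continuously by $0$ at $s=c$. With this, $F$ is continuous on $[c,d]$, and the proof concludes.
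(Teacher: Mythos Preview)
Your proof is correct and rests on the same idea as the paper's: the secant-direction map $(v,w)\mapsto\frac{\ga(w)-\ga(v)}{|\ga(w)-\ga(v)|}$ extends continuously across the diagonal to the unit tangent on the simply connected closed triangle $\overline\Sigma=\{v\le w\}$, and your global lift $\Theta$ is precisely the content of that observation. The paper packages this slightly more directly as a homotopy argument---the diagonal path $u\mapsto(u,u)$ and the two-edge path $(c,c)\to(c,d)\to(d,d)$ are homotopic in $\overline\Sigma$, so their images in $S^1$ turn through the same angle---which yields \eqref{dC} in one stroke and bypasses your separate mod-$2\pi$ verification.
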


Recall the classical result in planar geometry asserting that a positively oriented simple smooth  planar curve has turning number precisely $+1$, which was used in Lemma \ref{boundary_lem}.
The proof of Lemma \ref{lem:dC} is a transcription of part of the proof of the classical result; see Theorem 2 of \S5-7 in \cite{dC}.

\begin{proof}[Proof of Lemma \ref{lem:dC}]
Without loss of generality, we may take $c = 0$ and $d = 1$.
Define the continuous map 
$\sigma : \overline\Sigma \to S^1\embed\R^2$ by 
\beql{}
\sigma(v, w) = \begin{cases} \displaystyle \frac{\ga(w) - \ga(v)}{|\ga(w) - \ga(v)|} &\text{if }\, v < w \\ \displaystyle \frac{\ga_u(v)}{|\ga_u(v)|} &\text{if } \, v = w. \end{cases}
\eeq
Consider the diagonal and edge paths $D, E : [0,1] \to \overline\Sigma$ given by
$$D(u) = (u,u) \qquad \text{and} \qquad E(u) = \begin{cases} (0, 2u) &\text{for } 0 \leq u \leq \frac12 \\ (2u-1,1) &\text{for } \frac12 \leq u \leq 1. \end{cases}$$
The two paths are homotopic in $\overline\Sigma$, and thus so are their push-forwards $\sigma_* D = \si \circ D$ and $\sigma_*E = \si \circ E$ in $S^1$. 
By definition, the left-hand side of \eqref{dC} is the angle turned by $\sigma_*D$ and the right-hand side is the angle turned by $\sigma_*E$; hence they coincide.
\end{proof}

As well as compact arcs, the proofs of Lemmas \ref{lem:turnangbdd} and \ref{lem:turnangbddbdry} will also require us to consider non-compact arcs with one end asymptotic to a line. 
We collect these applications of Lemma \ref{lem:dC} below. 

\begin{cor}\label{cor:dC}\begin{enumerate}[(i)]
\item If $\gamma : (0, d] \to \C \simeq \R^2$ is an embedded smooth  curve for which $\ga(u)$ is $C^1$--asymptotic to a line as $u\downto0$, then
 \begin{equation}\label{cDends}
 \Psi(0, d) := \lim_{c \downarrow 0} \Psi(c, d) =  -\lim_{c \downto 0}{ \th_d(c) }.
 \end{equation}
\item If $\gamma : [c, 1) \to \C \simeq \R^2$ is an embedded smooth  curve for which $\ga(u)$ is $C^1$--asymptotic to a line as $u \uparrow 1$, then 
\begin{equation}\label{cDendf}
 \Psi(c, 1)  := \lim_{d \upto 1} \Psi(c, d) =   \lim_{d \upto 1} {\th_c(d)}.
 \end{equation}
 \end{enumerate}
\end{cor}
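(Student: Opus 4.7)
The plan is to apply Lemma \ref{lem:dC} to the sub-arc $\ga|_{[c,d]}$ for each $c\in (0,d)$, which gives the identity $\Psi(c,d)=\th_c(d)-\th_d(c)$, and then pass to the limit $c\downto 0$. The $C^1$-asymptotic hypothesis guarantees that the continuous tangent angle $\psi(c)$ has a limit $\psi_0\in\R$ as $c\downto 0$, so the left-hand side $\Psi(c,d)=\psi(d)-\psi(c)$ converges to $\psi(d)-\psi_0$, which is precisely the $\Psi(0,d)$ appearing in \eqref{cDends}. After rearranging, identity \eqref{cDends} reduces to the claim that $\th_c(d)\to 0$ as $c\downto 0$.

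To establish this, I read $\th_c(d)$ as the winding of the continuous path $u\mapsto \sigma(c,u):=(\ga(u)-\ga(c))/|\ga(u)-\ga(c)|\in S^1$ for $u\in(c,d]$, with the convention $\sigma(c,c):=\Tau(c)$ and starting lift $\th_c(c)=0$. The idea is to show that once $c$ is sufficiently small, this entire path is confined to an arbitrarily small arc of $S^1$ around the limit tangent direction $\Tau_0:=e^{i\psi_0}$. Such confinement pins down the branch and forces $\th_c(d)\to 0$, ruling out a silent jump by a nonzero multiple of $2\pi$.

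To verify this confinement, I would fix an auxiliary $c_0\in (0,d)$ and split $[c,d]$ into a \emph{tail} $[c,c_0]$ and a \emph{core} $[c_0,d]$. On the tail, for $c_0$ chosen small, the $C^1$-asymptotic condition makes $\Tau(u)$ uniformly close to $\Tau_0$; the chord between any two nearby parameters of such a curve is itself close to $\Tau_0$, so $\sigma(c,u)$ stays near $\Tau_0$ uniformly in $u\in[c,c_0]$ and in $c\in(0,c_0]$. On the core, $\ga(u)$ is bounded for $u\in[c_0,d]$, whereas $\ga(c)\to\infty$ along the asymptotic line in the direction $-\Tau_0$; hence $\ga(u)-\ga(c)=-\ga(c)(1+o(1))$ uniformly in $u$ as $c\downto 0$, and so $\sigma(c,u)\to\Tau_0$ uniformly in $u\in[c_0,d]$. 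Combining the tail and core estimates confines the entire path to a small arc around $\Tau_0$, giving $\th_c(d)\to 0$ as required.

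Part (ii) will follow either by a symmetric argument at the opposite end of $\ga$, or more economically by applying part (i) to the reflected-and-reversed curve $\widetilde{\ga}$ from Remark \ref{symmetry_rmk} (which interchanges the two ends) and translating the conclusion back via the reflection/reversal rules of $\Psi$ and of the winding functions recorded in Definition \ref{def:incfunc}. The main obstacle is the uniform estimate on the tail $[c,c_0]$: pinning down the continuous branch requires uniform $C^1$-closeness of the chord direction to the tangent, since this is precisely what rules out a silent shift of $\th_c$ by a nonzero multiple of $2\pi$ that would otherwise spoil the limit.
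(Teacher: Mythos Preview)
Your proposal is correct and follows essentially the same approach as the paper: both reduce (i) via Lemma \ref{lem:dC} to showing $\th_c(d)\to 0$, and both argue this by confining the chord-direction path $u\mapsto\sigma(c,u)$ to a small arc around the limiting tangent direction, handling the asymptotic tail (where tangents are nearly $\Tau_0$, hence so are chords) and the bounded core (where $\ga(c)\to\infty$ dominates) separately. The paper packages the same two-region estimate via an explicit rotate/scale/translate normalisation and a single cone picture, and obtains (ii) from (i) by a plain orientation-reversing reparametrisation rather than reflection-and-reversal, but these are cosmetic differences.
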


\begin{proof}
By considering a reverse parametrisation, it suffices to prove only (i).
We will show that the first term on the right-hand side of \eqref{dC}  will vanish in the limit as $c \downto 0$, i.e.~that
\beql{limeq}
\lim_{c \downto 0} { \th_c(d) } = 0.
\eeq
Let $\ep \in (0,\frac1{100})$ be arbitrary. 
By rotating $\ga$, it suffices to assume that $\ga(u)$ asymptotes to the negative $x$-axis and $\psi(u) \to 0$ as $u \downto 0$, where $\Tau(u) = e^{i\psi(u)} $.
By scaling, it suffices to assume that $\ga$ lies within the strip $\{x + iy : |y| < 1\} \subset \C$.
And by translating, it suffices to assume that $\ga(d)$ lies to the right of the $y$--axis and $|\psi(u)| \leq \frac\ep{100}$ whenever $\ga(u)$ lies to the left of the $y$--axis.  

In this setup, for any point $\ga(c)$ with sufficiently large and negative $x$--coordinate ($\ga(c) = x + iy$ with $x < -\frac{100}\ep$ will do), the cone of opening angle $\ep$ around $\Tau(c)$ contains the whole half-strip $\{ x + iy : x \geq 0, \, |y| < 1 \}$, as well as the arc of $\ga$ joining $\ga(c)$ to the $y$--axis. 
Consequently $|\th_c(d)| < \ep$, and as $\ep  \in (0,\frac1{100})$ was arbitrarily small, we  conclude that \eqref{limeq} holds. 
See Figure \ref{fig:dConeend} for an illustration.
\end{proof}

\begin{figure}
\centering
\begin{tikzpicture}[scale=1]

\draw[fill, blue!4] (2, 1.5) to (-9,.2) to (2, -2) ;
\draw[dotted] (2, 1.5) to (-9,.2) to (2, -2)             ;

\draw[dashed, ->] (-10,0) to (2.5, 0) node[right]{$x$} ;
\draw[dotted] (-10, 1) to (2.5, 1) node[right]{$y=1$};
\draw[dotted] (-10, -1) to (2.5, -1) node[right]{$y=-1$};
\draw[dashed, ->] (0, -2) to (0, 1.5) node[above]{$y$};

\draw[very thick, ->] (-9, .2) to (-8.5, .18) ;
\draw[thick, ->] (-9, .2) to[out=-1, in=190]  (0, .1) to[out=10, in=180] (1, -.9) to[out=0, in=-45] (1.5, -.5) to[out=180-45, in=-135] (1,.5) to[out=45, in = 150] (1.5, .5) to[out=-30, in=150] (1.7, 0.4) ;
\draw[very thick, dotted] (-9, .2) to[out=179, in=1] (-10, .1)  ;
\node at (1.5,.5) [circle,fill,inner sep=1pt]{} ;
\draw (1.5, .5) node[above]{$\ga(d)$} ;
\node at (-9,.2) [circle,fill,inner sep=1pt]{} ;
\draw (-9, .2) node[above]{$\ga(c)$} ;
\draw[->] (-8.75, -1.1) node[below]{$\Tau(c)$} to (-8.75, .19) ;

\draw[->] (-5, 1.1) node[above]{cone of opening angle $\ep$ around $\Tau(c)$} to (-5, 0.5);

\end{tikzpicture}
\caption{Illustration to show that if the arc $\ga: [c, d]$ is contained within the cone of opening angle $\ep$ around the tangent vector $\Tau(c)$, then necessarily it must hold that $|\th_c(d) | < \ep$.} 
\label{fig:dConeend}
\end{figure}
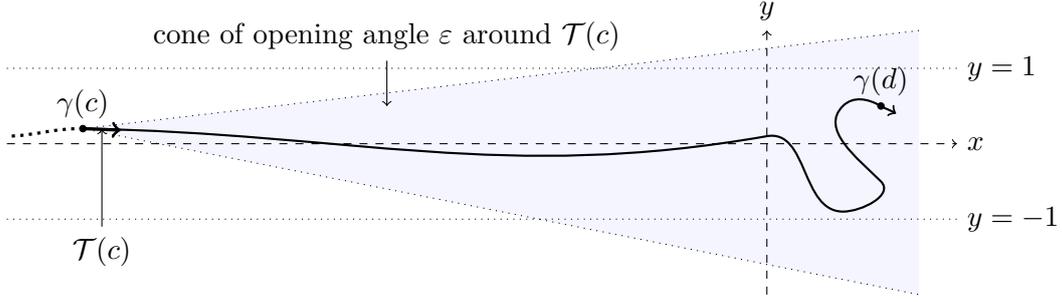

Given Lemma \ref{lem:dC} and our objective from Lemmas \ref{lem:turnangbdd} and \ref{lem:turnangbddbdry}, 
we see that we must control the winding angle $\th_c(d)$ when $(c,d)$ represent an extremal of $\ca$.
We will address that issue in the next section.

\subsection{Proofs of Lemmas \ref{lem:turnangbdd} and \ref{lem:turnangbddbdry}}

We start by stating a lemma of a similar character to Lemmas \ref{lem:turnangbdd} and \ref{lem:turnangbddbdry}. 
Although the following lemma holds for the trivial static flow, we omit that case for ease of presentation.

\begin{lem}\label{lem:turnangbddred}
Let $\gamma : (0,1) \times [0,T) \to \R^2$ be a curve shortening flow in the setting of Lemma \ref{boundary_lem} that is \emph{not} the static flow of the $x$--axis, in particular with $\al \in (-\pi, \pi)$ possibly negative, and extend the turning function $\Psi$ and the swept area $\ca$ continuously to $\overline\Sigma \times [0, T)$, as described there.
Suppose for some $t \in (0, T)$ that $(v_*, w_*) \in \overline\Sigma$ is such that either $\ca(v_*, w_*, t) = \sup_{(v, w) \in \Sigma} \ca(v, w, t)$,  or
$v_* = 0$ and $\ca(0, w_*, t) = \sup_{w \in (0,1)} \ca(0, w, t)$.
Then 
\beql{turnangbddred}
\Psi(v_*, w_*, t) \leq \max\{ 0, \al \}.
\eeq
\end{lem}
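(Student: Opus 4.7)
The proof will rest on an analytic observation (that extremality of $\ca$ forces radial tangency) coupled with a geometric identification of the turning function. For the analytic step, at an interior maximum $(v_*, w_*) \in \Sigma$ of $\ca(\cdot, \cdot, t)$, the first-order conditions $\ca_v(v_*, w_*, t) = 0 = \ca_w(v_*, w_*, t)$ combined with the formulas $2\ca_w = *(\Tau(w) \wedge \ga(w))$ and $2\ca_v = -*(\Tau(v) \wedge \ga(v))$ (valid when $u$ is an arc-length parametrisation of $\ga(\cdot,t)$, exactly as derived in the proof of Lemma \ref{AH_eqs_lem}) force both tangent vectors $\Tau(v_*, t)$ and $\Tau(w_*, t)$ to be parallel to the corresponding position vectors $\ga(v_*, t)$ and $\ga(w_*, t)$. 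Equivalently, the tangent lines to $\ga(\cdot, t)$ at these points pass through the origin. Embeddedness together with the radial ends structure rules out $\ga(v_*, t) = 0$ or $\ga(w_*, t) = 0$. In the boundary case $v_* = 0$, the asymptotic alignment of $\ga(\cdot, t)$ with the ray $L_\pi$ supplied by Lemma \ref{boundary_lem} already plays the role of ``radial tangency at $v_* = 0$'', so we need only the conclusion at $w_*$.

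For the geometric step, I would use Lemma \ref{lem:dC} (when $v_* > 0$) or Corollary \ref{cor:dC}(i) (when $v_* = 0$) to rewrite
$$\Psi(v_*, w_*, t) = \th_{v_*}(w_*) - \th_{w_*}(v_*),$$
with $-\th_{w_*}(v_*)$ interpreted as $-\lim_{c \downto 0} \th_{w_*}(c)$ in the boundary case. The plan is then to apply the Hopf Umlaufsatz (the total-turning formula) to the closed piecewise smooth loop $\Gamma$ obtained by concatenating the arc $\ga([v_*, w_*], t)$ with the radial segment from $\ga(w_*, t)$ to the origin and from the origin back to $\ga(v_*, t)$ (replacing the second radial segment by the entire end $L_\pi$ when $v_* = 0$). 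The turning of the smooth pieces is $\Psi(v_*, w_*, t)$ (from the arc) plus $0$ (from the straight radial segments); the exterior angles at the corners $\ga(v_*, t)$ and $\ga(w_*, t)$ are each either $0$ or $\pi$ precisely because of the radial tangency; and the exterior angle at the origin is controlled by the polar angles of $\ga(v_*, t)$ and $\ga(w_*, t)$. These ingredients combine into an identity that determines $\Psi(v_*, w_*, t)$ modulo $2\pi$, and the specific lift is then pinned down by the global boundary constraints $\psi(0, t) = 0$, $\psi(1, t) = \al$ together with the embeddedness of $\ga(\cdot, t)$.

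The main obstacle I anticipate is bookkeeping in this final step, since the argument branches according to (i) whether $\Tau(v_*, t)$ and $\Tau(w_*, t)$ point radially inward or outward relative to the origin, (ii) the winding number of $\Gamma$ around the origin and any other point in its interior, and (iii) whether the two radial segments forming $\Gamma$ meet the arc only at their endpoints (if they do not, $\Gamma$ must first be decomposed into embedded sub-loops before the Umlaufsatz applies). I expect each branch to deliver the same bound $\Psi(v_*, w_*, t) \leq \max\{0, \al\}$, with tightness approached in the limiting configuration where the arc hugs the two radial lines $L_\pi$ and $L_\al$, so that the turning over the arc approaches the total tangent-angle sweep from $\psi = 0$ to $\psi = \al$.
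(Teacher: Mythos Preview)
Your opening step---first-order conditions at the extremal force radial tangency---matches the paper (this is the first assertion of Claim~\ref{cl:windingfuncsversion2}), and so does the plan to express $\Psi(v_*,w_*,t)$ via Lemma~\ref{lem:dC} and Corollary~\ref{cor:dC}. One small correction: embeddedness plus radial ends does \emph{not} rule out $\ga(v_*,t)=0$ or $\ga(w_*,t)=0$; an embedded curve can pass through the origin once, and nothing prevents that point from realising the extremum. The paper treats this case explicitly.

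The substantive gap is in the Umlaufsatz step. The loop $\Gamma$ you form (arc plus two radial legs through the origin) is in general far from embedded: the arc $\ga([v_*,w_*],t)$ can spiral and cross the radial legs many times (think of Figure~\ref{fern_fig}). For such a loop the turning formula only gives $\Psi(v_*,w_*,t)$ modulo $2\pi$, and you claim the lift is ``pinned down by the global boundary constraints $\psi(0,t)=0$, $\psi(1,t)=\al$ together with embeddedness of $\ga(\cdot,t)$.'' But embeddedness of $\ga$ on its own places no bound on $\Psi(v_*,w_*,t)$: an embedded plane curve can turn through an arbitrarily large angle between two of its points. What actually controls the winding here is \emph{second-order} information from maximality---the sign conditions on sub-swept-areas $\ca(v_1,v_*)\leq 0$, $\ca(v_*,v_2)\geq 0$, etc.\ (Claim~\ref{cl:sweptarea})---which feed into a contradiction argument: if a winding function $\th_{v_*}$ or $\th_{w_*}$ were too large, one could extract a sub-arc whose swept area has the wrong sign (Lemma~\ref{lem:windingconstraints}, including the lift to the universal cover of $\C\setminus\{p_*\}$ to handle non-embedded configurations as in Figure~\ref{badcon:fig2}). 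Your proposal never invokes maximality beyond the first-order conditions, and ``decompose into embedded sub-loops'' is precisely the step where this missing ingredient would have to enter. Without a mechanism that converts swept-area maximality into winding bounds, the case analysis cannot close.
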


The significance of Lemma \ref{lem:turnangbddred} is that it implies both Lemmas \ref{lem:turnangbdd} and \ref{lem:turnangbddbdry}.

\begin{proof}[Proof that Lemma \ref{lem:turnangbddred} implies Lemmas \ref{lem:turnangbdd} and \ref{lem:turnangbddbdry}]
Lemmas \ref{lem:turnangbdd} and \ref{lem:turnangbddbdry} are both trivial when $\ga$ is the static flow of the $x$--axis.
Moreover, for the $\alpha \in [0, \pi)$ from either Lemma \ref{lem:turnangbdd} or \ref{lem:turnangbddbdry}, which satisfies $\max\{0,\al\} = \al $, we immediately see that Lemma \ref{lem:turnangbddred} implies case (i) of either Lemma \ref{lem:turnangbdd} or \ref{lem:turnangbddbdry} respectively.

Fix $ t \in (0, T)$, and consider $(v_*, w_*) \in \overline\Sigma$ as in case (ii) of either Lemma \ref{lem:turnangbdd} 
 or \ref{lem:turnangbddbdry}. 
Let $\bar\ga$ be the reflection of $\ga$ in the $x$--axis, and let $\bar\ca$ and $\bar\Psi$ be the swept area and turning function of $\bar\ga$ respectively.
By Remark \ref{symmetry_rmk}, we have that $\bar\ca(v, w,t) = - \ca(v, w,t)$ and $\bar\Psi(v, w, t) = -\Psi(v,w,t)$.
Therefore $\bar\ga$ is a curve shortening flow with initially radial ends at angles $\pi$ and $\bar\al := -\alpha \leq 0$, 
and, respectively, either $( v_*, w_*)$ \emph{maximises} $\bar \ca(\cdot, \cdot, t)$ or $w_*$ \emph{maximises} $\bar \ca(0, \cdot, t)$.
Applying Lemma \ref{lem:turnangbddred} with $\bar\al$ replacing $\al$, $\bar \ca$ replacing $\ca$ and $\bar \Psi$ replacing $\Psi$, we deduce that $\bar \Psi( v_*, w_*) \leq \max\{0, \bar\al \} = 0 $. 
This implies, respectively, either \eqref{turnlowbdd} or \eqref{turnlowbddtop}.
\end{proof}

Our overarching goal for this section will therefore be achieved once we prove Lemma \ref{lem:turnangbddred}. 

Fix $t \in (0, T)$, and suppress the dependence of $\ga$, $\ca$ and $\Psi$ on $t$ here and below.
The only information we need from viewing $\ga = \ga(\cdot, t)$ as coming from  curve shortening flow, rather than being an arbitrary embedded smooth curve whose ends are strongly asymptotic to the ends of radial half-lines, comes from Angenent's intersection principle \cite{angenent1991parabolic}. 
Specifically, we will use that  $\ga$ crosses lines and line segments discretely.

Also fix  $(v_*, w_*) \in \overline\Sigma$ maximising $\ca = \ca(\cdot, \cdot, t)$ in either of the senses in Lemma \ref{lem:turnangbddred}.

It will be convenient for the proof of Lemma \ref{lem:turnangbddred} to view $\ga = \ga(\cdot, t)$ in a different system of coordinates.
Write $\ga(u) =: r(u) e^{i (\pi - \varphi(u))}$, so that (given enough regularity on $r$ and $\vph$)
\beql{xietaint} \ca(v, w) =  \frac12 \int_{v}^{w} r(u)^2 \varphi_u \, du =: \int_{v}^{w} \rho(u) \varphi_u \, du, \eeq
where $ \rho(u) := \frac{r(u)^2}{2} \geq 0 $ and $\varphi(u) \in \R$. 
If $\ga$ misses the origin, then this can be achieved in the usual fashion by taking $r(u) = |\ga(u)|$ and selecting a continuous choice of $\vph$; 
if $\ga(u_0) = 0$ for some 
$u_0 \in (0, 1)$, then we must allow $r$ to change sign at $u_0$ to select a continuous $\vph$.
Either construction leads to smooth $\rho$ and $\vph$.
Note that we are abusing notation by thinking of $\varphi$ both as the polar angle $\pi - \th$, where $\th$ is the usual polar angle, and as a function of $u$.

The following lemma gives basic control on the swept areas at maximal points. 

\begin{claim}\label{cl:sweptarea}
\begin{enumerate}[(1)]
    \item If $v_* > 0$ and
    \begin{enumerate}[(i)]
        \item $0 \leq v_1 \leq v_*$, then \beq \ca(v_1, v_*)  \leq  0 \label{v1wrongsign}. \eeq
        \item $v_* \leq v_2 \leq w_*$, then \beq  \ca(v_*, v_2) \geq 0. \label{v2wrongsign} \eeq
    \end{enumerate}
    \item If $w_* < 1$ and 
    \begin{enumerate}[(i)]
        \item $v_* \leq w_1 \leq w_*$, then \beq \ca(w_1, w_*) \geq 0. \label{w1wrongsign} \eeq
        \item $ w_* \leq w_2 \leq 1$, then \beq \ca(w_*, w_2) \leq 0. \label{w2wrongsign} \eeq
      \end{enumerate}
\end{enumerate}
\end{claim}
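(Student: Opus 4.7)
The plan is to derive all four subclaims from two elementary ingredients: first, the swept area is additive along the parametrisation, meaning $\ca(a, c, t) = \ca(a, b, t) + \ca(b, c, t)$ whenever $a \leq b \leq c$ (immediate from the integral definition in Definition \ref{swept_area_def}, and preserved by the continuous extension to $\overline{\Si}$ from Lemma \ref{boundary_lem}); and second, by hypothesis $(v_*, w_*)$ is a maximiser of $\ca(\cdot, \cdot, t)$ either over all of $\overline{\Si}$ (the interior case) or over the slice $\{0\} \times (0,1)$ (the boundary case, which forces $v_* = 0$). Each subclaim is then a one-step rearrangement.

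To illustrate with part (1)(i): suppose $v_* > 0$, so the operative maximum is the interior one, and fix $0 \leq v_1 \leq v_*$. By additivity,
$$\ca(v_1, v_*, t) = \ca(v_1, w_*, t) - \ca(v_*, w_*, t),$$
where $(v_1, w_*) \in \overline{\Si}$ (the only degenerate possibility $v_1 = v_* = w_*$ forces $\ca(v_1, v_*, t) = 0$ trivially). The maximality of $\ca(v_*, w_*, t)$ yields $\ca(v_1, w_*, t) \leq \ca(v_*, w_*, t)$, so $\ca(v_1, v_*, t) \leq 0$. Parts (1)(ii), (2)(i), (2)(ii) follow by exactly the same template: additivity isolates the target swept area as a signed difference $\pm \bigl[\ca(v_*, w_*, t) - \ca(\text{comparison point}, t)\bigr]$, and the maximality inequality supplies the desired sign.

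The only point that warrants care is the boundary-max case, where $v_* = 0$ and only part (2) of the claim is asserted. Here the comparison points used in the additivity identities for (2)(i) and (2)(ii) are $(0, w_1)$ and $(0, w_2)$, both of which lie on the slice $\{v = 0\}$ over which $(0, w_*)$ is assumed maximal, so the argument goes through verbatim. There is no real obstacle to this claim: it is pure combinatorial bookkeeping from the extremality hypothesis. All of the substantive geometric work for Lemma \ref{lem:turnangbddred} is yet to come, where these area sign estimates are combined with the winding-function machinery of Section \ref{sec:ideasingredients} to pin down $\Psi(v_*, w_*, t)$.
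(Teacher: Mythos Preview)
Your proof is correct and uses the same two ingredients as the paper: additivity of the swept area and the maximality hypothesis. The only cosmetic difference is that the paper, after proving (1) directly as you do, chooses to derive (2) from (1) via the reflect-and-reverse symmetry of Remark~\ref{symmetry_rmk} (explicitly noting that the direct argument also works) in order to set up that symmetry trick for repeated use in the later, harder claims.
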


The claim is an immediate consequence of additivity and maximality. 
Nevertheless, we include a proof for completeness, and to highlight how the natural symmetries introduced in Remark \ref{symmetry_rmk} can and will be used to streamline arguments in the rest of this section.   

\begin{proof}[Proof of Claim \ref{cl:sweptarea}]
We start with (1). 
Since the swept area is additive and $v_*$ maximises $\ca(\cdot, w_*)$, 
we have in the setting of (1i), i.e.~for $0 \leq v_1 \leq v_*$, that
$$ \ca(v_1, v_*) + \ca(v_*, w_*) =
\ca(v_1, w_*)
\leq \ca(v_*, w_*), $$
which implies,  after rearranging, \eqref{v1wrongsign}.
In the setting of (1ii), \eqref{v2wrongsign} follows from the calculation
$$\quad \ca(v_2, w_*) \leq \ca(v_*, w_*) = \ca(v_*, v_2) + \ca(v_2, w_*). $$
We can show (2) by an essentially identical argument, however, it is more instructive here, and below, to show that (1) implies (2) by the symmetries described in Remark \ref{symmetry_rmk}. (The two parts are in fact equivalent.)
Let $\widetilde \ga = \hat{\bar \ga}$ be a reflected and reversed version of $\ga$, with swept area $\widetilde\ca$ and turning function $\widetilde\Psi$ so that $\widetilde\ca(v,w) = \ca(1-w, 1-v)$ and $\widetilde\Psi(v,w) = \Psi(1-w,1-v)$.  
If we set $\widetilde v_* = 1 - w_*$ and $\widetilde w_* = 1- v_*$, then $\widetilde\ca(\widetilde v_*, \widetilde w_*) = \ca(v_*, w_*)$ is the maximum of $\widetilde\ca$. 
The inequalities $v_* \leq w_1 \leq w_* \leq w_2 \leq 1$ transform to $0 \leq \widetilde v_1 \leq \widetilde v_* \leq \widetilde v_2 \leq \widetilde w_*$, where $\widetilde v_1 = 1- w_2$ and $\widetilde v_2 = 1- w_1$, meaning that the setting of (2i) transforms to the setting of (1ii), and of (2ii) to (1i). 
Hence \eqref{v2wrongsign} in the form 
$\widetilde\ca(\widetilde v_*, \widetilde v_2)  \geq  0$
implies \eqref{w1wrongsign}, and \eqref{v1wrongsign} implies \eqref{w2wrongsign}. 
(The steps above are reversible, so the same argument shows that (2) implies (1).) 
\end{proof}

This elementary control on swept areas will ultimately be developed into global control on the winding functions. First, we use it to establish local control on the winding functions. 

One could view the claim below as a refined version of the second derivative test applied to $\ca$.
Indeed, having $(v_*, w_*)$ maximal for $\ca$, with $w_*  < 1$ and $\ga(w_*) \neq 0$, will tell us that $\ga$ is moving radially (outward or inward) at $\ga(w_*)$, and that $\vph$ has a strict local maximum at $w_*$. 
In the case $\vph_{uu}(w_*) \neq 0$, this exactly follows from the second derivative test applied to $\ca$. 
However, in general we need to consider the degenerate case of $\vph_{uu}(w_*) = 0$, as well as the case $\ga$ passes through the origin at $w_*$. 
To treat these exceptional cases,  we work via our knowledge of areas from Claim \ref{cl:sweptarea}, and use the information there to tell us  which side of the tangent line at $\ga(w_*)$ we are on near $w_*$. 
In turn, this will tell us the sign of the winding angle $\th_{w_*}$ as we enter/exit $w_*$, provided we know whether $\ga$ is entering/exiting $\ga(w_*)$ in an outward/inward direction.

Part (2) of Claim \ref{cl:windingfuncsversion2} below compiles all this information into four possible cases, split into entering/exiting $\ga(w_*)$, parts (i)/(ii), in an outward/inward direction, parts (a)/(b). 
For ease of application, we also give in (1) the four cases for entering/exiting $\ga(v_*)$ in an outward/inward direction, even though all of (1) follows from (2) by a symmetry argument.

\begin{claim}\label{cl:windingfuncsversion2}
    \begin{enumerate}[(1)]
        \item If $v_* >0$, then the tangent line to $\ga(v_*)$ passes through the origin, and 
        \begin{enumerate}[(a)]
            \item \begin{enumerate}[(i)]
                \item if $\Tau(v_1)$ points outward\footnote{by which we mean that $\Tau(v_1)$ and $\ga(v_1)$, thought of as vectors based at the origin, make an acute angle at the origin} (from the origin) for all $v_1 < v_*$ sufficiently close, then
                    \beq \th_{v_*}(v_1) > 0 \quad \text{for all $v_1 < v_*$ sufficiently close}. \label{voutback} \eeq
                \item if $\Tau(v_2)$ points outward for all $v_2 > v_*$ sufficiently close, then
                    \beq \th_{v_*}(v_2) <0 \quad \text{for all $v_2 > v_*$ sufficiently close}. \label{voutfor} \eeq
            \end{enumerate}
            
             \item \begin{enumerate}[(i)]
                \item if $\Tau(v_1)$ points inward\footnote{by which we mean $-\Tau(v_1)$ points outward} (to the origin) for all $v_1 < v_*$ sufficiently close, then
                    \beq  \th_{v_*}(v_1) < 0 \quad \text{for all $v_1 < v_*$ sufficiently close}. \label{vinback} \eeq
                \item if $\Tau(v_2)$ points inward for all $v_2 > v_*$ sufficiently close, then
                    \beq  \th_{v_*}(v_2) >0 \quad \text{for all $v_2 > v_*$ sufficiently close}. \label{vinfor} \eeq
            \end{enumerate}
        \end{enumerate}
        
        \item If $w_* <1$, then the tangent line to $\ga(w_*)$ passes through the origin, and 
        \begin{enumerate}[(a)]
            \item \begin{enumerate}[(i)]
                \item if $\Tau(w_1)$ points outward for all $w_1 < w_*$ sufficiently close, then
                    \beq \th_{w_*}(w_1) < 0 \quad \text{for all $w_1 < w_*$ sufficiently close}. \label{woutback} \eeq
                \item if $\Tau(w_2)$ points outward for all $w_2 > w_*$ sufficiently close, then
                    \beq \th_{w_*}(w_2) >0 \quad \text{for all $w_2 > w_*$ sufficiently close}. \label{woutfor} \eeq
            \end{enumerate}
            
             \item \begin{enumerate}[(i)]
                \item if $\Tau(w_1)$ points inward for all $w_1 < w_*$ sufficiently close, then
                    \beq \th_{w_*}(w_1) > 0 \quad \text{for all $w_1 < w_*$ sufficiently close}. \label{winback} \eeq
                \item if $\Tau(w_2)$ points inward for all $w_2 > w_*$ sufficiently close, then
                    \beq \th_{w_*}(w_2) <0 \quad \text{for all $w_2 > w_*$ sufficiently close}. \label{winfor} \eeq
            \end{enumerate}
        \end{enumerate}
    \end{enumerate}
\end{claim}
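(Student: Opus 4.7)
My plan is to prove case (2) first; case (1) then follows by applying the reflect-and-reverse symmetry of Remark \ref{symmetry_rmk}. Under this symmetry, the max point $(v_*, w_*)$ is sent to $(1 - w_*, 1 - v_*)$; the roles of $v_*$ and $w_*$ swap, outward is exchanged with inward (the inner product $\langle \Tau, \ga \rangle$ changes sign), and each winding angle is negated, producing precisely the sign flip needed to match each sub-case of (1) with one sub-case of (2). Throughout I fix $t \in (0, T)$ and suppress it. I may also assume $v_* < w_*$ strictly, since the diagonal case $v_* = w_*$ gives $\ca(v_*, w_*) = 0$ and makes the claim's outward/inward hypotheses collapse into vacuous or trivial statements.

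Since $w_* \in (v_*, 1)$, the point $w_*$ is a two-sided interior maximum of the smooth map $w \mapsto \ca(v_*, w)$, and the first-derivative test gives
$$ 0 = \ca_w(v_*, w_*) = \tfrac{1}{2} *(\ga_u(w_*) \wedge \ga(w_*)). $$
This forces $\ga_u(w_*) \parallel \ga(w_*)$ (or $\ga(w_*) = 0$), so the tangent line $\ell$ at $\ga(w_*)$ passes through the origin, establishing the first assertion. The key geometric input for the sign of $\th_{w_*}$ is then that $\ga$ lies strictly on one specific side of $\ell$ in a punctured neighborhood of $w_*$. To extract which side, I rotate coordinates so that $\ell$ is the $x$--axis and $\ga(w_*) = (R, 0)$ with $R \geq 0$, and close up the arc $\ga|_{[w_1, w_*]}$ with the two straight segments $\ga(w_*) \to 0$ and $0 \to \ga(w_1)$. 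Both segments lie on radial lines and so contribute zero to the integrand $*(\ga_u \wedge \ga)$, so the resulting closed loop still has $\ca = \ca(w_1, w_*)$; reading this off as the enclosed signed area via Green's theorem gives $\ca(w_1, w_*) = \tfrac{R}{2} y(w_1) + O(|w_1 - w_*|^3)$ as $w_1 \upto w_*$. Combining with $\ca(w_1, w_*) \geq 0$ from Claim \ref{cl:sweptarea}(2)(i) yields $y(w_1) \geq 0$, and the analogous computation on the right of $w_*$ together with Claim \ref{cl:sweptarea}(2)(ii) gives $y(w_2) \geq 0$. To promote these inequalities to strict positivity $y > 0$ on a punctured neighborhood, I will invoke Angenent's intersection principle \cite{angenent1991parabolic}: if $y \equiv 0$ on an open interval then $\ga$ would locally coincide with $\ell$, and Angenent's principle applied to $\ga$ against the static flow along $\ell$ forces $\ga$ to coincide globally with $\ell$, contradicting the standing assumption that $\ga$ is not the static flow of the $x$--axis.

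With $y > 0$ on a punctured neighborhood of $w_*$, the four sub-cases of (2) reduce to immediate geometric observations. In case (a), $\Tau(w_*) = (+1, 0)$; for $w_1 < w_*$ close, the chord from $\ga(w_1)$ to $\ga(w_*)$ has positive $x$--component (the outward assumption on $\Tau(w_1)$ forces $x_u > 0$ nearby) and negative $y$--component (since $y(w_1) > 0$), so it lies strictly in the open fourth quadrant relative to $\Tau(w_*)$, yielding \eqref{woutback}; \eqref{woutfor} follows with the chord direction reversed, and \eqref{winback}, \eqref{winfor} by additionally flipping $\Tau(w_*)$ to $(-1, 0)$ and the sign of $x_u$. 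The main obstacle is handling the degenerate cases in the leading-order expansion: when $R = 0$ (i.e.~$\ga$ passes through the origin at $w_*$) or when $y$ vanishes to higher order at $w_*$, the expansion $\ca(w_1, w_*) = \tfrac{R}{2} y(w_1) + O(\cdots)$ collapses to a higher-order expression in $|w_1 - w_*|$, and I must iterate the maximum-principle test on the successive higher derivatives of $\ca(v_*, \cdot)$ at $w_*$ to extract the sign of the lowest non-vanishing derivative of $y$ at $w_*$ (which the iteration forces to be of even order). Angenent's principle once again excludes the fully degenerate branch $y \equiv 0$.
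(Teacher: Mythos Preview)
Your overall strategy---prove (2) and then deduce (1) by the reflect-and-reverse symmetry, obtaining the tangent-through-origin assertion from the first-derivative condition $\ca_w(v_*,w_*)=0$---matches the paper. Where you diverge is in the key step of determining on which side of the tangent line $\ell$ the curve $\ga$ sits near $w_*$. The paper argues by contradiction: Angenent says $\ga$ lies strictly on one side near $w_*$; assuming it is the wrong side, the paper closes up the arc $\ga|_{[w_1,w_*]}$ with two radial segments to form an \emph{embedded} anticlockwise loop (invoking Angenent a second time, now against the radial line through $\ga(w_1)$, to secure embeddedness after possibly increasing $w_1$), and then reads off the sign of $\ca(w_1,w_*)$ as minus the enclosed area via Remark~\ref{ca_for_closed_curve}, contradicting Claim~\ref{cl:sweptarea}. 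Your route is a Taylor-expansion argument: expand $\ca(w_1,w_*)$ at $w_*$ and use the two one-sided inequalities of Claim~\ref{cl:sweptarea} to pin down the leading Taylor coefficient of $y$. This is a legitimate alternative that trades the loop construction for bookkeeping in the expansion.

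However, your second paragraph overclaims. From $\ca(w_1,w_*)=\tfrac{R}{2}y(w_1)+O(|w_1-w_*|^3)\geq 0$ you cannot conclude $y(w_1)\geq 0$: since $y(w_*)=y_u(w_*)=0$, both $\tfrac{R}{2}y(w_1)$ and the error are $O(|w_1-w_*|^2)$ and $O(|w_1-w_*|^3)$ respectively, so at best you get $y_{uu}(w_*)\geq 0$, not $y(w_1)\geq 0$. The argument that actually works is essentially what you sketch in your final paragraph, and it should be the main argument rather than a degenerate side-case: Angenent gives not just discreteness but \emph{finite multiplicity} of zeros, so $y(u)=b_k(u-w_*)^k+O(|u-w_*|^{k+1})$ with $b_k\neq 0$; for $R>0$ one computes $\ca(w_1,w_*)=\tfrac{R}{2}b_k(w_1-w_*)^k+O(|w_1-w_*|^{k+1})$, and the two-sided constraints from Claim~\ref{cl:sweptarea} then force $k$ even and $b_k>0$, hence $y>0$ on a punctured neighbourhood. (Note that merely excluding ``$y\equiv 0$ on an interval'' is not enough here---a smooth function can vanish to infinite order at a point without being identically zero; you need the finite-order part of Angenent's theorem.) Finally, your parenthetical ``even order'' is wrong when $R=0$: there the leading term of $\ca(w_1,w_*)$ is of order $k+1$, and the same two-sided argument forces $k$ \emph{odd} with $x_u(w_*)\,b_k>0$. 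The resulting sign of $y$ on each side of $w_*$ still delivers the correct sign of $\th_{w_*}$ in the applicable sub-cases (2aii) and (2bi), but this case should be worked out separately.
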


We will need all parts of Claim \ref{cl:windingfuncsversion2} in the proof of Lemma \ref{lem:turnangbddred} except for (1ai) and (2bii), but we include them here for completeness.

    If $\ga(v_*) = 0$, then necessarily $\Tau(v_1)$ points inward for all $v_1 < v_*$ sufficiently close and $\Tau(v_2)$ points outward for all $v_2 > v_*$ sufficiently close, i.e.~both the settings of (1bi) and (1aii) from Claim \ref{cl:windingfuncsversion2}, or from Lemma \ref{lem:windingconstraints} below, hold.
    If $\ga(v_*) \neq 0$, then $\Tau(v_*)$ either points outward or points inward, using
    that the tangent line to $\ga(v_*)$ passes through the origin, i.e.~the first assertion of (1) in Claim \ref{cl:windingfuncsversion2}.
        Therefore, by continuity, either $\Tau(u)$ points outward for all $u$ sufficiently close to $v_*$, meaning that the settings of both (1ai) and (1aii) hold, or $\Tau(u)$ points inward for all $u$ sufficiently close to $v_*$, meaning that the setting of both (1bi) and (1bii) hold. 
    Similarly, if $\ga(w_*) = 0$, then both the settings of (2aii) and (2bi) hold, and if $\ga(w_*) \neq 0$, then either both the settings of (2ai) and (2aii) hold, or both the settings of (2bi) and (2bii) hold.
    As a consequence, we have established:

\begin{claim}
\label{rmk:windingfunc}
\begin{enumerate}[(1)]
   \item  If $v_* > 0$ and we are \emph{not} in the setting of (1aii) of Claim \ref{cl:windingfuncsversion2}, then $\ga(v_*) \neq 0$, 
    $\Tau(v_*)$ points inward towards the origin, and both settings (1bi) and (1bii) hold.
    \item If $w_* < 1$ and we are not in the setting of (2bi), then 
    $\ga(w_*) \neq 0$, $\Tau(w_*)$ points outward away from the origin, and both settings (2ai) and (2aii) hold.
\end{enumerate}
\end{claim}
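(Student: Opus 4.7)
My plan is a case analysis on whether $\ga(v_*)$ (respectively $\ga(w_*)$) coincides with the origin, combined with the first assertion of Claim \ref{cl:windingfuncsversion2} and continuity of $\ga$ and $\Tau$. The statement is really a contrapositive rearrangement of that earlier claim, so no new geometric input is required.

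I would start with part (1), arguing by contraposition. Suppose the setting of (1aii) fails, i.e.\ it is not the case that $\Tau(v_2)$ points outward for all $v_2 > v_*$ sufficiently close. First I would rule out $\ga(v_*) = 0$: if $\ga(v_*) = 0$, then since $\ga$ is smooth and embedded, for $v_2 > v_*$ close to $v_*$ the unit position vector $\ga(v_2)/|\ga(v_2)|$ is close to $\Tau(v_*)$, and $\Tau(v_2)$ is also close to $\Tau(v_*)$, so the two make an acute angle and $\Tau(v_2)$ points outward---placing us in setting (1aii), a contradiction. Hence $\ga(v_*) \neq 0$.

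From the first assertion of Claim \ref{cl:windingfuncsversion2}(1), the tangent line at $\ga(v_*)$ passes through the origin, so $\Tau(v_*)$ is parallel or antiparallel to the nonzero vector $\ga(v_*)$, i.e.\ points outward or inward. I would exclude the outward case: by continuity of $\ga$ and $\Tau$, the sign of $\langle \Tau(u), \ga(u) \rangle$ persists near $v_*$, forcing $\Tau(v_2)$ to point outward for all $v_2 > v_*$ close and returning us to setting (1aii). Therefore $\Tau(v_*)$ must point inward, and the same continuity argument shows $\Tau(v)$ points inward for all $v$ near $v_*$, which simultaneously gives both settings (1bi) and (1bii).

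Part (2) follows by the same argument carried out at $w_*$ rather than $v_*$. Alternatively, I would deduce it from part (1) applied to the reflected-and-reversed curve $\widetilde\ga$ from Remark \ref{symmetry_rmk} at the point $\widetilde v_* = 1 - w_*$: under this symmetry, \emph{outward} and \emph{inward} are interchanged and the two endpoints are swapped, so ``not in setting (1aii)'' for $\widetilde\ga$ translates to ``not in setting (2bi)'' for $\ga$, while the conclusions (1bi) and (1bii) for $\widetilde\ga$ translate to (2aii) and (2ai) for $\ga$. I do not expect any substantive obstacle; the only mildly delicate point is the orientation bookkeeping needed to verify that excluding the single setting (1aii) really does force both of the opposite settings (1bi) and (1bii) to hold simultaneously.
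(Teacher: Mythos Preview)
Your proposal is correct and follows essentially the same approach as the paper: a case analysis on whether $\ga(v_*)=0$, invocation of the first assertion of Claim \ref{cl:windingfuncsversion2} to see that $\Tau(v_*)$ is radial, and a continuity argument to propagate the inward/outward dichotomy to nearby points. The paper presents this as a direct case split (if $\ga(v_*)=0$ then (1aii) and (1bi) hold; if $\ga(v_*)\neq 0$ then either both (1ai), (1aii) hold or both (1bi), (1bii) hold) while you phrase it contrapositively, but the content is identical; your justification of the $\ga(v_*)=0$ case via $\ga(v_2)/|\ga(v_2)|\to\Tau(v_*)$ is a touch more explicit than the paper's ``necessarily''.
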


\begin{proof}[Proof of Claim \ref{cl:windingfuncsversion2}]
We start with (2).
If $\ga(w_*) = 0$, then it is obvious the tangent line to $\ga(w_*)$ passes through the origin, and that 
we are in the settings of both (2aii) and (2bi). 
If $\ga(w_*) \neq 0$, 
then taking the derivative of \eqref{xietaint} with respect to $w$, keeping in mind that $ w_*$ maximises $\ca(v_*, \cdot)$, gives
$$ 0 = \ca_w(v_*, w_*) = \rho(w_*) \vph_u(w_*).$$
As $\rho(w_*) > 0$, we have that $\vph_u(w_*) = 0$, and hence that
$$ \ga_u = r_u e^{i(\pi - \vph)} - i \vph_u  \cdot r e^{i(\pi - \vph)} = \frac{r_u}{r} \cdot \ga$$
at $w_*$.
This is enough to conclude that the  tangent line to $\ga$ at $\ga(w_*)$ passes through the origin also in the case $\ga(w_*) \neq 0$, which is the first assertion of (2).

We prove only (2bi); the proofs of (2ai), (2aii) and (2bii) are analogous.
Rotate $\ga$ so that the tangent line to $\ga(w_*)$ is the $x$--axis and $\Tau(w_*) = 1 \in S^1 \subset \C$.
Angenent \cite{angenent1991parabolic} tells us that $\ga(w_*)$ must be an isolated intersection point of the $x$--axis and $\ga$, and, in particular, that $\ga(u)$ must either lie strictly above the $x$--axis for all $u < w_*$ sufficiently close, or lie strictly below the $x$--axis for all $u < w_*$ sufficiently close. 
We will show that it lies below.

Suppose for a contradiction that $\ga(u)$ lies strictly above the $x$--axis for all $u < w_*$ sufficiently close. 
Fix any $w_1 < w_*$ with $\ga(w_1)$ lying strictly above the $x$--axis, and let $L$ be the radial line through $\ga(w_1)$. 
Invoking Angenent \cite{angenent1991parabolic} again, we have that $\ga $ must intersect $L$ over $[w_1, w_*]$ discretely, and so, by increasing $w_1<w_*$ if necessary, we may assume that $\ga$ does not intersect $L$ over $(w_1,w_*)$. 
Hence adjoining the arc of $\ga$ over $[w_1, w_*]$ to the radial lines from the origin $0$ to $\ga(w_1)$ (which lies above the $x$--axis) and from $\ga(w_*)$ to $0$ produces an embedded closed curve oriented anti-clockwise.  
Thus the negation of the area enclosed by the curve, which by Remark \ref{ca_for_closed_curve} coincides with the swept area $\ca(w_1, w_*)$, is strictly \emph{negative}, contradicting (2i) from Claim \ref{cl:sweptarea}.
See Figure \ref{fig:wthdoicallthis?} for a schematic.

\begin{figure}
\centering
\begin{tikzpicture}[scale=1.3]
\draw[fill, blue!8] (-2, 0) to[out=180,in=-60] (-4, 2) to (0,0) to (-2,0);
\draw[very thick, <-] (-2, 0) to[out=180,in=-60] (-4, 2) to[out=180-60, in=0] (-5, 3) to[out=180, in=60] (-6,2) to[out=180+60, in=0] (-7,1) to[out=180, in=-70] (-8, 2.5);

\draw[dashed] (-8,0) node[left]{$x$--axis} to (-2,0) ; 
\draw[dashed] (-4, 2) to (-8, 4) node[left]{$L$} ;
\draw[dashed, very thick] (-4, 2) to (0,0) node[below]{$0$} to (-2,0);
\draw[dashed] (1, -.5) to (0,0) to (1, 0) ;

\node at (0,0) [circle,fill,inner sep=1.1pt]{} ;
\node at (-2,0) [circle,fill,inner sep=1.1pt]{} ;
\node at (-4,2) [circle,fill,inner sep=1.1pt]{} ;
\draw (-2, 0) node[below]{$\ga(w_*)$} ;
\draw (-4,2) node[above right]{$\ga(w_1)$} ;

\draw[very thick, ->] (-2,1) to (-2.2, 1.1) ;
\draw[very thick, ->] (-1,0) to (-0.8, 0) ;
\end{tikzpicture}
\caption{Schematic to show that if the tangent line to $\ga(w_*)$ is the $x$--axis and $\Tau(w_1)$ points inward to the origin for $w_1 < w_*$ sufficiently close, but $\ga(w_1)$ lies above the $x$--axis for all $w_1 < w_*$ sufficiently close, then necessarily $\ca(w_1, w_*) < 0$ for some $w_1 < w_*$, and therefore 
$(v_*, w_*)$ cannot maximise $\ca$ 
by Claim \ref{cl:sweptarea}.}
\label{fig:wthdoicallthis?}
\end{figure}
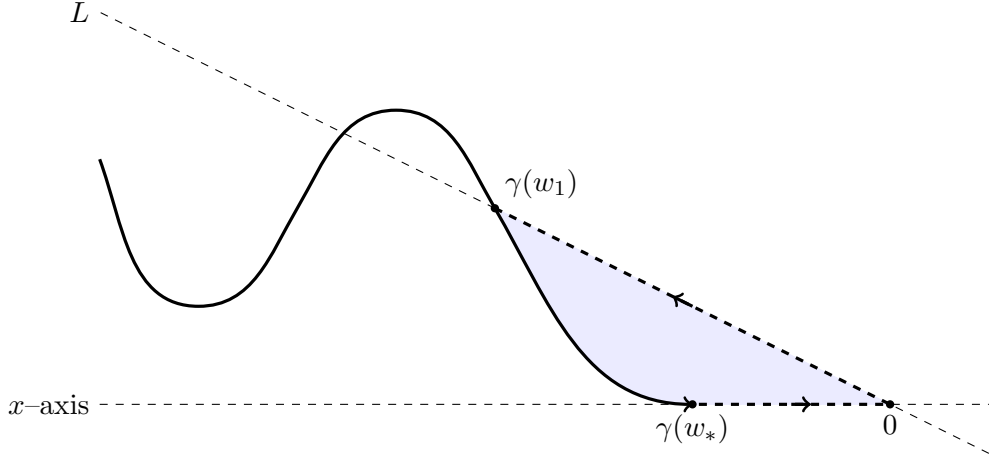

Hence $\ga(u)$ lies below the $x$--axis for all $u < w_*$ sufficiently close. 
Write $\ga = x + iy$ in Cartesian coordinates, so that $y(w_1) < y(w_*) = 0$ for all $w_1 < w_*$ sufficiently close. 
As $\Tau(w_*) = 1$, meaning here that $x_u(w_*) > 0$, we must have that $x(w_1) < x(w_*)$ for all $w_1 < w_*$ sufficiently close. 
Thus $\ga(w_*)$ is upward and rightward from $\ga(w_1)$ for all $w_1 < w_*$ sufficiently close. 
This enough to conclude \eqref{winback}. 

Finally, similar to in Claim \ref{cl:sweptarea}, we show that (2) implies (1) using the symmetries from Remark \ref{symmetry_rmk}. 
(The argument below is reversible, so they are in fact equivalent.)
By rotating, we may assume without loss of generality that $\ga(v_*)$ lies on the $x$--axis.
Let $\bar\ga$ be the reflection of $\ga$ in the $x$--axis and $\widetilde\ga(u) = \hat{\bar\ga}(u)  = \bar{\ga}(1 - u) $ be the reflected and reversed curve, and let $\widetilde v_* = 1 - w_*$ and $\widetilde w_* = 1- v_*$.
Let 
 $\widetilde \ca$ be the swept area of $\widetilde \ga$,
so that  $(\widetilde v_*, \widetilde w_*) $ maximises $\widetilde \ca$. 

By the first assertion of (2), we have that the tangent line to $\widetilde\ga$ at $\widetilde\ga(\widetilde w_*) = \ga(v_*)$ is the $x$--axis, which is unaffected by reversing and reflecting $\widetilde\ga$ back to $\ga$. 
Let $\bar\Tau$ and $\bar\th_{v_*}$ be the tangent vector and winding function of $\bar\ga$ respectively, and let $\widetilde\Tau$ and $\widetilde\theta_{\widetilde w_*}$ be the tangent vector and winding function of $\widetilde\ga$ respectively.
If $\Tau(u)$ points outward from the origin, then so will $\bar\Tau(u)$, and therefore $\widetilde \Tau(1-u)$ points \emph{inward} to the origin. 
Similarly, if $\Tau(u)$ points inward to the origin, then $\widetilde\Tau(1-u)$ points outward from the origin.
Since the inequalities $v_1 < v_* < v_2$ transform to $\widetilde w_1 < \widetilde w_* < \widetilde w_2$, where $\widetilde w_1 = 1 - v_2$ and $\widetilde w_2 = 1- v_1$, we thus have that the setting of (1ai) transforms to the setting of (2bii) (with $\widetilde \Tau(\widetilde w_2)$ for $\widetilde w_2 > \widetilde w_*$), of (1aii) to (2bi), of (1bi) to (2aii), and of (1bii) to (2ai).
As $$\widetilde\th_{\widetilde w_*}(u) = \bar\th_{v_*}(1 - u) = -\th_{v_*}(1-u)$$  for all $u \in (0,1)$,
we deduce that $\eqref{winfor}$ (with $\widetilde \th_{\widetilde w_*}(\widetilde w_2)$ for $\widetilde w_2 > \widetilde w_*$) implies \eqref{voutback}, \eqref{winback} implies \eqref{voutfor}, \eqref{woutfor} implies \eqref{vinback}, and \eqref{woutback} implies \eqref{vinfor}.
Hence (2) implies (1), as asserted.
\end{proof}

We now combine the local control on the winding functions from Claim \ref{cl:windingfuncsversion2} with the global control on swept areas from Claim \ref{cl:sweptarea} to deduce \emph{global} control on the winding functions.
For consistency, we keep the same numbering in the global control Lemma \ref{lem:windingconstraints} below as in the local control Claim \ref{cl:windingfuncsversion2} above: entering/exiting [(i)/(ii)] $\ga(v_*)$/$\ga(w_*)$ [(1)/(2)] in an outward/inward direction [(a)/(b)].
And again, while we do not require either (1ai) or (2bii) in the proof of Lemma \ref{lem:turnangbddred}, we include them here for completeness.

\begin{lem}\label{lem:windingconstraints}
In the setting of Lemma \ref{lem:turnangbddred}, suppressing the dependence on $t$ as above,
    \begin{compactenum}[(1)]
        \item suppose $v_* > 0$, and let $\th_{v_*}$ be the winding function of $\ga$. 
        \begin{compactenum}[(a)] 
            \item \begin{compactenum}[(i)]
                \item If $\Tau(v_1)$ points outward (from the origin) for all $v_1 < v_*$ sufficiently close, then
                \beq\label{thvoutback} \th_{v_*}(v_1) < \pi \quad \text{for all }\, 0 < v_1 < v_*. \eeq
                \item If $\Tau(v_2)$ points outward for all $v_2 > v_*$ sufficiently close, then
                 \beq\label{thvoutfor} \th_{v_*}(v_2) < 0 \quad \text{for all }\, v_* < v_2 \leq w_*. \eeq
            \end{compactenum}
             \item \begin{compactenum}[(i)]
                \item If $\Tau(v_1)$ points inward (to the origin) for all $v_1 < v_*$ sufficiently close, then
                \beq\label{thvinback} \th_{v_*}(v_1) < 0 \quad \text{for all }\, 0 < v_1 < v_*. \eeq
                \item If $\Tau(v_2)$ points inward for all $v_2 > v_*$ sufficiently close, then 
                 \beq\label{thvinfor} \th_{v_*}(v_2) < \pi \quad \text{for all }\, v_* < v_2 \leq w_*. \eeq
            \end{compactenum}
        \end{compactenum}
        \item suppose $w_* < 1$, and let $\th_{w_*}$ be the winding function of $\ga$.  
        \begin{compactenum}[(a)]
            \item \begin{compactenum}[(i)]
                \item If $\Tau(w_1)$ points outward   for all $w_1 < w_*$ sufficiently close, then
                \beq\label{thwoutback} \th_{w_*}(w_1) > -\pi \quad \text{for all }\, v_* \leq  w_1 < w_*. \eeq
                \item If $\Tau(w_2)$ points outward   for all $w_2 > w_*$ sufficiently close, then 
                 \beq\label{thwoutfor} \th_{w_*}(w_2) > 0 \quad \text{for all }\, w_* < w_2 < 1. \eeq
            \end{compactenum}
             \item \begin{compactenum}[(i)]
                \item If $\Tau(w_1)$ points inward   for all $w_1 < w_*$ sufficiently close, then
                \beq\label{thwinback} \th_{w_*}(w_1) > 0 \quad \text{for all }\, v_* \leq w_1 < w_*. \eeq
                \item If $\Tau(w_2)$ points inward for all $w_2 > w_*$ sufficiently close, then  
                 \beq\label{thwinfor} \th_{w_*}(w_2) > -\pi \quad \text{for all }\, w_* < w_2 < 1. \eeq
            \end{compactenum}
        \end{compactenum}
    \end{compactenum}
\end{lem}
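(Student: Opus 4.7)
The plan is to prove each of the eight statements by contradiction, producing from a supposed violation a sub-arc of $\ga$ whose swept area contradicts one of the bounds in Claim \ref{cl:sweptarea}. By the reflection and orientation-reversal symmetries of Remark \ref{symmetry_rmk}, the eight statements reduce to essentially one representative; I focus on (2bi), where the target $\th_{w_*}(w_1) > 0$ matches the local bound of Claim \ref{cl:windingfuncsversion2}(2bi).

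Assume $w_* < 1$ and $\Tau(w_1)$ points inward for all $w_1 < w_*$ sufficiently close. By Claims \ref{cl:windingfuncsversion2}(2) and \ref{rmk:windingfunc}(2), the tangent line $L$ to $\ga$ at $\ga(w_*)$ is radial; in the non-degenerate subcase $\ga(w_*) \neq 0$ (the degenerate one is handled analogously) $\Tau(w_*)$ itself points inward, so after a rotation I may take $L$ to be the $x$-axis, $\ga(w_*) = (a, 0)$ with $a > 0$, and $\Tau(w_*) = (-1, 0)$. Suppose for contradiction that $\th_{w_*}(w_1) \leq 0$ for some $w_1 \in [v_*, w_*)$. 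Since $\th_{w_*}$ is continuous with $\th_{w_*}(w_*) = 0$ and $\th_{w_*}(w) > 0$ for $w < w_*$ close to $w_*$ (by Claim \ref{cl:windingfuncsversion2}(2bi)), the intermediate value theorem yields some $w \in (w_1, w_*)$ with $\th_{w_*}(w) = 0$, equivalently with $\ga(w)$ on $L$ to the right of $\ga(w_*)$. Angenent's intersection principle applied to $L$ makes the set of such $w$ finite, so I choose $w_0 \in [v_*, w_*)$ to be its largest element; it follows that $\th_{w_*} > 0$ throughout $(w_0, w_*)$.

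Form the closed curve $\Gamma$ by concatenating the sub-arc $\ga|_{[w_0, w_*]}$ with the straight segment on $L$ from $\ga(w_*)$ back to $\ga(w_0)$; because $L$ is radial the segment contributes zero to the swept-area integral, so $\ca(\Gamma) = \ca(w_0, w_*)$. In the generic scenario where $\ga|_{(w_0, w_*)}$ lies in the open upper half-plane bounded by $L$, the curve $\Gamma$ is a simple, anti-clockwise closed curve, and Remark \ref{ca_for_closed_curve} gives
\[ \ca(w_0, w_*) \;=\; \ca(\Gamma) \;=\; -\,\mathrm{area}(\Gamma) \;<\; 0, \]
contradicting Claim \ref{cl:sweptarea}(2i). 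The main obstacle is the remaining scenario where $\th_{w_*}$ reaches $\pi$ somewhere on $(w_0, w_*)$, causing $\ga$ to touch or transversely cross $L$ to the left of $\ga(w_*)$; this is resolved by refining the choice of $w_0$ iteratively through the finitely many $L$-crossings supplied by Angenent's principle to recover the same configuration on a shorter sub-arc, or equivalently by tracking the signed-area contributions of the additional lobes produced by any transverse crossings. The other seven statements, including the ``loose'' bounds of the form $\th > -\pi$ or $\th < \pi$ in cases such as (2ai), are proved by essentially the same construction: the threshold value of $\th_{w_*}$ (either $0$ or $\pm\pi$) and the relevant inequality of Claim \ref{cl:sweptarea} to be contradicted are determined by the combination of side (before/after $w_*$) and radial direction (inward/outward), but the closed-curve/area-sign mechanism is unchanged throughout.
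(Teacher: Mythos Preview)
Your overall strategy---reducing by symmetry to one representative case, assuming a violation of the winding bound, building a closed curve $\Gamma$ from the offending sub-arc and a radial closing segment, and extracting a sign on the swept area that contradicts Claim~\ref{cl:sweptarea}---is exactly the paper's approach. The paper works with case (1bii) rather than your (2bi), but the mechanism is the same, and the paper likewise declares the remaining cases ``analogous''.

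The genuine gap is your treatment of the non-generic situation in which $\Gamma$ is not a simple anticlockwise loop. Neither of your suggested fixes is made precise, and neither obviously succeeds: $w_0$ was already chosen as the \emph{largest} parameter with $\th_{w_*}=0$, so there is nothing to iteratively refine toward, and decomposing a non-simple $\Gamma$ into simple lobes does not by itself control the orientation---and hence the sign of the swept area---of each lobe. The paper handles this with a concrete device that your sketch is missing: it excises a small disc about the base point $p_*$, so that $\Gamma$ splits into a small embedded piece $\Gamma_1$ near $p_*$ (handled directly via Remark~\ref{ca_for_closed_curve}) and an exterior piece $\Gamma_2$; it then lifts $\Gamma_2$ to the universal cover of $\C\setminus\{p_*\}$ via $z-p_*=e^{w}$, where the lift becomes an \emph{embedded} closed curve bounding a domain $\Om$, and computes the swept area by Cauchy's theorem and Stokes' theorem as
\[
-\frac{1}{2i}\int_{\Gamma_2}\bar z\,dz \;=\; -\int_{\Om} e^{2\,\Re(w)}\,dx\wedge dy \;<\; 0.
\]
This universal-cover/Stokes computation is the substantive technical ingredient, and it is what converts your outline into a complete proof.
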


 When $v_* > 0$ or $w_* < 1$, Claim \ref{cl:windingfuncsversion2} tells us that the tangent line at $\ga(v_*)$ or at $\ga(w_*)$ respectively passes through the origin. 
 Therefore Lemma \ref{lem:windingconstraints} exhausts all possible configurations of $\ga$.

Before giving the proof of Lemma \ref{lem:windingconstraints}, we illustrate the core idea of the argument by means of a simple example.

For $v, w \in \R$ with $v < w$, let the smooth arc $\ga : [v,w] \to \C \simeq \R^2$ be formed by  joining $i =: \ga(v)$ to $2i$ and joining $2i$ to $-1+2i =: \ga(w)$ as in Figure \ref{fig:badconfigeg}.
Let $\Tau$ be the tangent vector, $\ca$ be the swept area and $\th_{v}$ be the winding function of $\ga$ respectively.

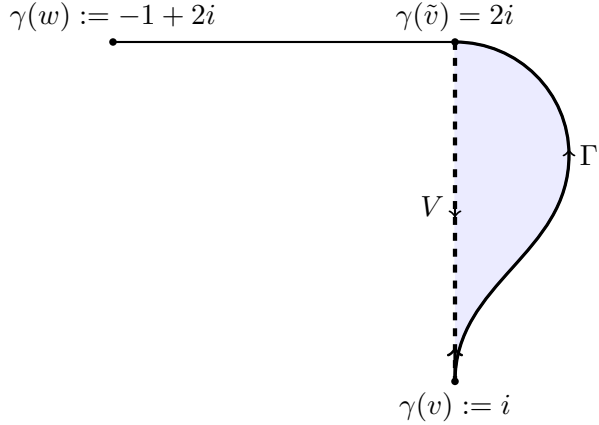
\begin{figure}
\centering
\begin{tikzpicture}[scale=1.5]
\draw[fill, blue!8] (0,1)  to[out=90,in=-90] (1,3) to[out=90, in=0] (0, 4) to[out=-90,in=90] (0, 2) ;
\draw[thick] (0,1) node[below]{$\ga(v) := i$} to[out=90,in=-90] (1,3) node[right]{$\Ga$} to[out=90, in=0] (0, 4) node[above]{$\ga(\tilde v) = 2i$} to[out=180,in=0] (-3, 4) node[above]{$\ga(w) := -1 + 2i$};
\draw[very thick] (0,1) to[out=90,in=-90] (1,3) to[out=90, in=0]  (0, 4) ;
\draw[ultra thick, dashed] (0,4) to (0,1); 
\draw[->, very thick] (0,1) to (0, 1.3) ;
\draw[->, thick] (1,2.95) to (1, 3.05) ;
\draw[->, thick] (0,2.55) node[left]{$V$} to (0, 2.45) ;
\node at (0,1) [circle,fill,inner sep=1pt]{};
\node at (0,4) [circle,fill,inner sep=1pt]{};
\node at (-3,4) [circle,fill,inner sep=1pt]{};
\end{tikzpicture}
\caption{Example of a configuration of a curve $\ga : [v, w] \to \C $ for which \eqref{voutfor} with $v_* = v$ holds but  \eqref{thvoutfor} with $v_* = v$ and $v_2 = \tilde v$ does \emph{not} hold at some $v < \tilde v < w$.
A simple area argument shows that $\ca(v, \tilde v) < 0$.}
\label{fig:badconfigeg}
\end{figure}

From Figure \ref{fig:badconfigeg} we can see that $\Tau(v) = i \in S^1 \subset \C$ is outward-pointing.
By continuity, we have that $\Tau(u)$ is outward-pointing for all $u > v$ sufficiently close, i.e.~for $v$ in place of $v_*$, we are in the common setting of (1aii) of Claim \ref{cl:windingfuncsversion2} (which tells us that $\th_{v}(u) < 0$ for $u > v$ sufficiently close) and Lemma \ref{lem:windingconstraints}.
We also have that $\th_{v}(w) = \frac\pi4 \geq 0$, i.e.~the conclusion \eqref{thvoutfor} of (1aii) of Lemma \ref{lem:windingconstraints} with $v_* = v$ does \emph{not} hold, which can be seen by noting that 
$$\ga(w) = -1 + 2i = i + (1+i)i = \ga(v) + \sqrt{2} \cdot e^{\frac{i\pi}4} \cdot \Tau(v). $$ 
Thus by continuity there is a point $v < \tilde v \leq w$ for which $\th_{v}(\tilde v) = 0$,
which is forcing $\ga(\tilde v)$ to lie on the same line radial line as $\ga(v) = i$, i.e.~on the imaginary axis.
By inspecting Figure \ref{fig:badconfigeg}, we see that $\ga(\tilde v) = 2i$. 

Let $V$ be the vertical line segment from $2i = \ga(\tilde v)$ to $i = \ga(v)$, illustrated as the dashed line in Figure \ref{fig:badconfigeg}, and let $\Ga$ be closed curve formed by adjoining $V$ to the arc of $\ga$ from $i$ to $2i$. 
By additivity, the swept area of $\Ga$ is the sum of $\ca(v, \tilde v)$ and the swept area of $V$.
It is easy to see\footnote{for example, by computing that $V_u \wedge V = 0$ for any regular parametrisation $u \mapsto V(u)$} that the swept area of $V$ vanishes, and therefore $\ca(v, \tilde v)$ coincides with the swept area of $\Ga$.
As $\Ga$ is a piecewise smooth closed curve oriented anti-clockwise, Remark \ref{ca_for_closed_curve} tells us that the swept area of $\Ga$ is the \emph{negation} of the area of the region enclosed  by $\Ga$, pictured as the shaded blue region. 
Hence we have that
$$\begin{aligned}
\ca(v, \tilde v) &= \text{swept area of $\Ga$} \\
&= - [ \text{area of shaded blue region} ] \\
&< 0.    
\end{aligned}$$
In the proof of Lemma \ref{lem:windingconstraints}, the argument above will form part of a contradiction argument. 
For example, if $(v_*, w_*)$ maximises $\ca$, then we know by (1aii) of Claim \ref{cl:sweptarea} that $\ca(v_*, v_2) \geq 0$.
If the arc of $\ga$ between $\ga(v_*)$ and $\ga(w_*)$ looked as in Figure \ref{fig:badconfigeg}, then by the argument above we would conclude that $\ca(v_*, v_2) < 0$ for some $v_* < v_2 \leq w_*$, which is contradictory.
So the argument above \emph{rules out} configurations such as that in Figure \ref{fig:badconfigeg}.
In practice we need to rule out much wilder arrangements than above, and so we will need a more general argument, but the core idea remains the same.

\begin{proof}[Proof of Lemma \ref{lem:windingconstraints}] 
As demonstrated in Claims \ref{cl:sweptarea} and \ref{cl:windingfuncsversion2}, by symmetry it suffices to prove\footnote{Specifically, the following pairs of parts are equivalent: (1ai) and (2bii); (1aii) and (2bi); (1bi) and (2aii); (1bii) and (2ai).} only one of (1) or (2). 
We give the proof of (1bii); the arguments for (1ai), (1aii) and (1bi) are analogous.

By rotating $\ga$, we may assume without loss of generality that $p_* := \ga(v_*)$ lies on the non-positive $x$--axis and $\Tau(v_*) = 1 \in S^1 \subset \C$.  
Then automatically $\ga(u)$ lies above the $x$--axis for all $u > v_*$ sufficiently close.
Suppose for a contradiction that \eqref{thvinfor} does not hold, i.e.~that $\th_{v_*}(v_2) = \pi$ for some $v_* < v_2 \leq w_*$. 
By reducing $v_2$ if necessary, we may assume that $\th_{v_*}(u) < \pi$ for all $v_* < u < v_2$.
Since $\Tau(v_*) = 1$, it follows that $p_2 := \ga(v_2)$ lies to the left of $p_*$ on the negative $x$-axis. 
Adjoin the (radial) line segment from $p_2 = \ga(v_2)$ to $p_* = \ga(v_*)$  to the arc of $\ga$ from $p_*$ to $p_2$. 
If the resulting closed curve $\Ga$ is embedded, then we may use Remark \ref{ca_for_closed_curve} directly and deduce that $\ca(v_*, v_2) < 0$, which by (1bii) of Claim \ref{cl:sweptarea} gives a contradiction. 
However, in general the curve may intersect itself along the $x$--axis, as in Figure \ref{badcon:fig2}.

\begin{figure}
\centering
\begin{tikzpicture}[scale=1.6, rotate=180]
\draw[fill, blue!8] (3,0) to[out=180, in=90] (2.75, -.43) to[out=-90,in=0] (2.5,-1) to[out=180,in=-90] (2,0) to[out=90, in=180] (3,1) to[out=0,in=90] (4,0) to[out=-90, in=180] (4.5,-1) to[out=0,in=-90] (5, 0) to[out=90,in=0] (3,2) to[out=180,in=90] (1, 0)  to[out=-90,in=180] (3.5,-2) to[out=0,in=-90] (6, 0) ;
\fill[pattern = crosshatch, pattern color=blue!24] (2.75, -.43) to[out=-30, in=210]  (3.25, -.43) to[out=30, in=-90] (3.5,0) to (3,0) to[out=180, in=90] (2.75, -.43);
\draw[fill, blue!16]  (4,0) to[out=-90, in=180] (4.5, -1) to[out=0,in=-90] (5, 0) to (4,0) ;
\draw[very thick] (3,0) to[out=180, in=90] (2.75, -.43) to[out=-90,in=0] (2.5,-1) to[out=180,in=-90] (2,0) to[out=90, in=180] (3,1) to[out=0,in=90] (4,0) to[out=-90, in=180] (4.5, -1) to[out=0,in=-90] (5, 0) to[out=90,in=0] (3,2) to[out=180,in=90] (1, 0)  to[out=-90,in=180] (3.5,-2) to[out=0,in=-90] (6, 0) ;
\draw[dashed, very thick] (3,0) to (6,0) ;
\draw[dashed] (0.5,0) node[right]{$0$} to (3,0);
\node at (.5,0) [circle,fill,inner sep=1pt]{} ;
\draw[dashed] (6, 0) to (6.75,0) ;
\draw[->, thick] (3,0) to (2.9,0);
\draw[->, thick] (3,1) to (3.1,1);
\draw[->, thick] (3,2)  to (2.9,2);
\draw[->, thick] (4.5,-1) to (4.6,-1);
\draw[->, thick] (3.5,-2) node[above]{$\Ga$} to (3.6,-2);
\draw[->, thick] (2.5,-1) to (2.4, -1);
\draw[->, thick] (4.5, 0) to (4.4,0) ;
\draw (3,0) node[below]{$p_*$} ;
\draw (6,0) node[below]{$p_2$} ;
\draw (3.65,0) node[below]{$p_1$} ;
\draw (2.75,-.5) node[right]{$p_0$} ;
\node at (6,0) [circle,fill,inner sep=1.1pt]{} ;
\node at (3,0) [circle,fill,inner sep=1.1pt]{} ;
\draw[dotted, samples=60,domain=0:240] plot ({3+cos(\x)/2}, {sin(\x)/2});
\node at (2.75,-.43) [circle,fill,inner sep=1.1pt]{} ;
\node at (3.5,0) [circle,fill,inner sep=1.1pt]{} ;
\draw[ultra thick, dotted, samples=30,domain=240:360] plot ({3+cos(\x)/2}, {sin(\x)/2});
\end{tikzpicture}
\caption{Illustration of a less transparently bad configuration of $\Ga$, the closed curve formed by adjoining the radial line segment from $p_2$ to $p_*$ to the arc of $\ga$ between $p_*$ and $p_2$. 
We may split $\Ga$ into the portion inside the dotted ball around $p_*$ and the portion outside the ball, and close each portion with the appropriately-orientated circular arc along the boundary of the dotted circle (between $p_0$ and $p_1$) to get two closed anti-clockwise-oriented curves $\Ga_1$ and $\Ga_2$ respectively.}
\label{badcon:fig2}
\end{figure}
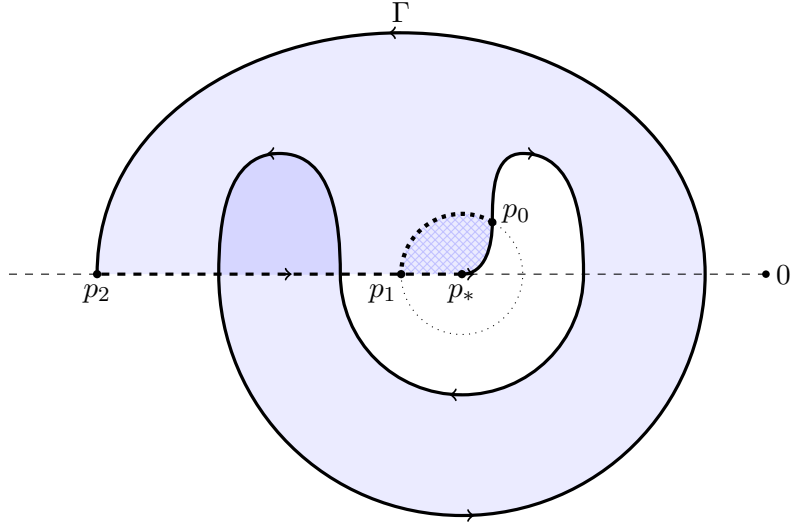

We argue that $\ca(v_*, v_2) < 0$ still holds in this case. 
Choose $\de > 0$ so small that the portion of $\Ga$ inside the ball $B_\de(p_*)$ of radius $\de$ around $p_*$ is topologically  embedded (it will in fact be piecewise smooth), and so small that $\Ga$ intersects the boundary of $B_\de(p_*)$ exactly twice, once on the arc of $\ga$, at the point $p_0$, and once on the line segment from $p_*$ to $p_2$, at the point $p_1$.
In particular, the portion of $\Ga$ inside the ball is a connected arc, which may be closed by traversing $\partial B_\de(p_*)$ anti-clockwise from $p_0$ to $p_1$.
The resulting closed curve $\Ga_1$ is piecewise smooth, embedded and oriented anti-clockwise, and so 
by Remark \ref{ca_for_closed_curve} the swept area of $\Ga_1$ is strictly negative.
Let $\Ga_2$ be the closed curve formed by adjoining the remaining portion of $\Ga$ to the clockwise circular arc along  $\partial B_\de(p_*)$ from $p_1$ to $p_0$. 
Since the swept area of $\Ga$, which coincides with $\ca(v_*, v_2)$, is the sum of the swept areas of $\Ga_1$ and $\Ga_2$, it suffices to show that swept area of $\Ga_2$ is negative.

Lift $\Ga_2$ to a curve $\ti\Ga_2$ in the universal cover of $\C\setminus \{p_*\}$, which has a global complex coordinate
$w\in\C$ satisfying $z-p_*=e^w$.
The lift $\ti\Ga_2$ is now an \textit{embedded} closed curve in the $w$-plane, so is the boundary of a domain $\Om$ in the $w$-plane.
By Remark \ref{ca_for_closed_curve} 
we can control the swept area of $\Ga_2$ by
\beqa
-\frac{1}{2i}\int_{\Ga_2} \zb\, dz &= -\frac{1}{2i}\int_{\ti\Ga_2} \overline{(p_*+e^w)}e^w \, dw
= -\frac{1}{2i}\int_{\ti\Ga_2} e^{\overline{w}+w} \, dw\\
&= -\frac{1}{2i}\int_{\Om} e^{\overline{w}+w} \, d\bar{w}\wedge dw
=-\int_{\Om} e^{2\Re(w)} \, dx\wedge dy\\
&<0,
\eeqa
where we have written $w=x+iy$, so that $d \bar w \wedge dw = 2i \, dx \wedge dy$, and used Cauchy's theorem (to get rid of the $p_*$) and Stokes' theorem.

Hence the swept area of $\Ga$, and thus $\ca(v_*, v_2)$, is strictly negative;  
but this contradicts (1ii) of Claim \ref{cl:sweptarea}. 
 Therefore we must have had that $\th_{v_*}(v_2) <\pi$ for all $v_* < v_2 \leq w_*$, i.e.~must have had \eqref{thvinfor}.
\end{proof}

Using Lemma \ref{lem:windingconstraints}, we can now complete the proof of Lemma \ref{lem:turnangbddred}.

\begin{proof}[Proof of Lemma \ref{lem:turnangbddred}]
We start by proving a few subcases.

\begin{claim}\label{cl:voutfor}
    \begin{enumerate}[(1)]
        \item  If $v_*>0$ and $\Tau(v_2)$ points outward for all $v_2 > v_*$ sufficiently close, i.e.~if we are in the setting of (1aii) of Lemma \ref{lem:windingconstraints}, then $\Psi(v_*, w_*) \leq 0$.
        \item If $w_*<1$ and $\Tau(w_1)$ points inward for all $w_1 < w_*$ sufficiently close, i.e.~if we are in the setting of (2bi), then $\Psi(v_*, w_*) \leq 0$.
    \end{enumerate}
\end{claim}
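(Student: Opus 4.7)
\textit{Proof proposal.} I will prove Part (1); Part (2) then follows by applying Part (1) to the reflect-and-reverse curve $\widetilde{\ga}(u) := \bar\ga(1-u)$ of Remark \ref{symmetry_rmk}. The tangent satisfies $\widetilde{\Tau}(u) = -\bar\Tau(1-u)$, so the hypothesis of (2) that $\Tau$ points inward at $\ga$ for $w_1 < w_*$ near becomes the hypothesis of (1) that $\widetilde\Tau$ points outward at $\widetilde\ga$ for $\tilde u > \tilde v_* := 1 - w_*$ near. Since $(\tilde v_*, \tilde w_*) := (1-w_*, 1-v_*)$ maximises $\widetilde\ca$ and $\widetilde\Psi(\tilde v_*, \tilde w_*) = \Psi(v_*, w_*)$, Part (1) applied to $\widetilde\ga$ gives Part (2).

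For Part (1), I combine Lemma \ref{lem:dC}, Lemma \ref{lem:windingconstraints}, and Corollary \ref{cor:dC}, splitting into three cases based on $w_*$ and the direction of $\Tau$ at $w_*$ (via Claim \ref{rmk:windingfunc}(2)). \textbf{Case A}: if $w_* = 1$, Corollary \ref{cor:dC}(ii) yields $\Psi(v_*, 1) = \lim_{d \upto 1} \th_{v_*}(d)$, and combining with Lemma \ref{lem:windingconstraints}(1aii) (which gives $\th_{v_*}(d) < 0$ on $(v_*, 1)$) produces $\Psi(v_*, w_*) \leq 0$. \textbf{Case B}: if $w_* < 1$ and we are in setting (2bi), then Lemma \ref{lem:dC} gives $\Psi(v_*, w_*) = \th_{v_*}(w_*) - \th_{w_*}(v_*)$, while (1aii) and (2bi) of Lemma \ref{lem:windingconstraints} respectively give $\th_{v_*}(w_*) < 0$ and $\th_{w_*}(v_*) > 0$, yielding $\Psi(v_*, w_*) < 0$.

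\textbf{Case C}: $w_* < 1$ with $\Tau(w_*)$ outward (so both settings (2ai) and (2aii) hold by Claim \ref{rmk:windingfunc}(2)) is the main obstacle, since direct substitution into Lemma \ref{lem:dC} of $\th_{v_*}(w_*) < 0$ and $\th_{w_*}(v_*) > -\pi$ only yields $\Psi(v_*, w_*) < \pi$. I instead route the bound through the asymptotically radial end at $u = 1$ via additivity $\Psi(v_*, w_*) = \Psi(v_*, 1) - \Psi(w_*, 1)$. For $\Psi(w_*, 1) \geq 0$: Lemma \ref{lem:windingconstraints}(2aii) gives $\th_{w_*}(d) > 0$ for $d \in (w_*, 1)$, and Corollary \ref{cor:dC}(ii) yields $\Psi(w_*, 1) \geq 0$. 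For $\Psi(v_*, 1) \leq 0$: the key observation is that maximality of $(v_*, w_*)$ for $\ca(v, w) = g(w) - g(v)$, where $g(u) := \ca(0, u)$, makes $v_*$ a \emph{global} minimum of $g$ on $[0, 1]$, so $\ca(v_*, v_2) \geq 0$ for \emph{every} $v_2 \in [v_*, 1]$ and not merely for $v_2 \in [v_*, w_*]$; re-running the proof of Lemma \ref{lem:windingconstraints}(1aii) with this extended area bound yields $\th_{v_*}(v_2) < 0$ for all $v_2 \in (v_*, 1)$, and Corollary \ref{cor:dC}(ii) then gives $\Psi(v_*, 1) \leq 0$. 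Combining, $\Psi(v_*, w_*) = \Psi(v_*, 1) - \Psi(w_*, 1) \leq 0$.
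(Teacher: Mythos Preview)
Your symmetry reduction and Cases~A and~B are sound and match the paper's reasoning. However, Case~C contains a genuine gap: the ``key observation'' that $v_*$ is a global minimum of $g(u) := \ca(0,u)$ on $[0,1]$ is false in general. Maximality of $(v_*, w_*)$ for $\ca(v,w) = g(w) - g(v)$ over $\overline\Sigma$ only forces $g(v_*) \leq g(v)$ for $v \in [0, w_*]$ (this is exactly Claim~\ref{cl:sweptarea}(1)); for $v \in (w_*, 1]$ it gives only the upper bound $g(v) \leq g(w_*)$ and no lower bound. A schematic counterexample: take $g$ with $g(0)=0$, $g(0.2)=-\tfrac12$, $g(0.5)=1$, $g(0.8)=-1$, $g(1)=-\tfrac12$; the maximiser of $g(w)-g(v)$ over $v\leq w$ is $(v_*,w_*)=(0.2,0.5)$ with $v_*>0$ and $w_*<1$, yet $g(0.8)=-1<g(v_*)$. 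Consequently you cannot extend the area bound $\ca(v_*, v_2) \geq 0$ past $v_2 = w_*$, the re-run of Lemma~\ref{lem:windingconstraints}(1aii) on the larger interval is not available, and the intermediate bound $\Psi(v_*, 1) \leq 0$ is unjustified.

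The paper handles the analogous hard subcase by a direct geometric refinement rather than routing through the end at $u=1$. Working symmetrically on Part~(2), in the subcase where setting (1bii) holds but (1aii) fails one first obtains only the coarse bound $\Psi(v_*,w_*) < \pi$ from Lemma~\ref{lem:dC}. One then assumes $0 < \Psi(v_*, w_*) < \pi$ for contradiction and, after rotating so that $\ga(v_*)$ lies on the negative $x$-axis with $\Tau(v_*)=1$, observes that $\Tau(w_*)$ points into the open upper half-plane; since $\Tau(w_*)$ is inward this forces $\ga(w_*)$ into the closed lower half-plane, whence $\sin\th_{v_*}(w_*)\leq 0$, which together with $\th_{v_*}(w_*)<\pi$ gives the refined estimate $\th_{v_*}(w_*) \leq 0$ and hence $\Psi(v_*, w_*) < 0$, a contradiction.
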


\begin{proof}
    By symmetry, it suffices to prove only (2).
    From (2bi) of Lemma \ref{lem:windingconstraints}, we have that \eqref{thwinback} holds, i.e.~that
\beql{gurriers}
\th_{w_*}(w_1) > 0\text{ for all } v_* \leq w_1 < w_*, \, \text{ in particular, } \, \th_{w_*}(v_*) > 0.
\eeq
    In the case $v_* = 0$, by (i) of Corollary \ref{cor:dC} we deduce that
    $$ \Psi(v_*, w_*) = \Psi(0, w_*) = -\lim_{w_1 \downto 0} {\th_{w_*}(w_1)} \leq 0,$$
    as required.
    
    In the case $v_* > 0$, by Claim \ref{rmk:windingfunc} we know that either the setting of (1aii) or the setting of (1bii) of Lemma \ref{lem:windingconstraints} holds.
    If we are in the setting of (1aii), then \eqref{thvoutfor} holds. 
    In particular, $\th_{v_*}(w_*) < 0$, and from Lemma \ref{lem:dC} and \eqref{gurriers} we conclude that
    $$ \Psi(v_*, w_*) = \th_{v_*}(w_*) - \th_{w_*}(v_*) < 0.$$
    If instead (1aii) does not hold, then we are in the setting of (1bii), so \eqref{thvinfor} holds, so in particular 
    \beql{guess1th} \th_{v_*}(w_*) <  \pi \eeq
    and, by Lemma \ref{lem:dC} and \eqref{gurriers}, we have only
    \beql{coarsePsi} \Psi(v_*, w_*) = \th_{v_*}(w_*) - \th_{w_*}(v_*) < \pi. 
    \eeq
    Suppose that \beql{badPsi} 0 < \Psi(v_*, w_*) < \pi .\eeq
    By Claim  \ref{rmk:windingfunc}, as $v_* > 0$ and the setting of (1aii) does \emph{not} hold, 
    we have $\ga(v_*) \neq 0$ and $\Tau(v_*)$ points inward to the origin.
    Therefore we can rotate $\ga$ so that $\ga(v_*)$ lies on the negative $x$--axis and $\Tau(v_*) = 1 \in S^1 \subset \C$. 
    Since $0 < \Psi(v_*, w_*) < \pi$, we must have that $\Tau(w_*)$ points into the upper half plane.
        This implies that $\ga(w_*)$ cannot lie in the upper half plane, because if it did, then as $w_* < 1$ and we are in the setting of (2bi) with $\ga(w_*) \neq 0$, by Claim \ref{cl:windingfuncsversion2} the tangent  $\Tau(w_*)$ would point inward to the origin, i.e~would point into the \emph{lower} half plane.

    By Definition \ref{def:incfunc}, we have 
    $$ e^{i \th_{v_*}(w_*)} = e^{i\th_{v_*}(w_*)} \Tau(v_*) = \frac{ \ga(w_*) - \ga(v_*) }{ | \ga(w_*) - \ga(v_*) | } .  $$ 
    Taking imaginary parts gives
    $$ \sin \th_{v_*}(w_*) \leq 0,$$
    because $\ga(w_*)$ has nonpositive imaginary part and $\ga(v_*)$ is real.
    Combined with \eqref{guess1th}, we get the refined estimate
    \beql{gess2th} \th_{v_*}(w_*) \leq 0. \eeq
    Using \eqref{gess2th} instead of \eqref{guess1th} in Lemma \ref{lem:dC}, together with $\th_{w_*}(v_*) > 0$  from \eqref{gurriers}, gives
    $$ \Psi(v_*, w_*) = \th_{v_*}(w_*) - \th_{w_*}(v_*) < 0.$$
    But this contradicts \eqref{badPsi}, strengthening \eqref{coarsePsi} to 
    \[ \Psi(v_*, w_*) \leq 0, \]
    as required.
\end{proof}

\begin{claim}\label{cl:vinback}
Suppose $w_*<1$ and  $\Tau(w_2)$ points outward for all $w_2 > w_*$ sufficiently close, i.e.~we are 
in the setting of (2aii) of Lemma \ref{lem:windingconstraints}.
If either $v_* = 0$, or $v_* > 0$ and $\Tau(v_1)$ points inward for all $v_1 < v_*$ sufficiently close, i.e.~the setting of (1bi) holds, then $\Psi(v_*, w_*) \leq \alpha.$
\end{claim}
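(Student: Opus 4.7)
The plan is to bound $\Psi(v_*, w_*, t)$ above by $\al$ by controlling the tangent angle $\psi$ separately at $v_*$ and $w_*$. Specifically, I will show $\psi(w_*, t) \leq \al$ and $\psi(v_*, t) \geq 0$, whence the claimed bound follows from $\Psi(v_*, w_*, t) = \psi(w_*, t) - \psi(v_*, t)$ together with the endpoint normalizations $\psi(0, t) = 0$ and $\psi(1, t) = \al$ furnished by Lemma \ref{boundary_lem}.

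To establish $\psi(w_*, t) \leq \al$, I would push the hypothesized winding bound at $w_*$ out to the asymptotically radial end at $u = 1$. That end is $C^1$-asymptotic to the half-line $L_\al$, so Corollary \ref{cor:dC}(ii) applied to $\ga(\cdot,t)|_{[w_*, 1)}$ gives
\[
\al - \psi(w_*, t) \;=\; \Psi(w_*, 1, t) \;=\; \lim_{d \upto 1} \th_{w_*}(d).
\]
The setting (2aii) of Lemma \ref{lem:windingconstraints} then provides the \emph{global} estimate $\th_{w_*}(w_2) > 0$ on the full interval $w_* < w_2 < 1$; passing to the limit $w_2 \upto 1$ yields $\al - \psi(w_*, t) \geq 0$, as desired.

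To show $\psi(v_*, t) \geq 0$, the case $v_* = 0$ is immediate from the normalization $\psi(0, t) = 0$. For $v_* > 0$ in setting (1bi), the argument is symmetric and routes through the end at $u = 0$: Corollary \ref{cor:dC}(i) applied to $\ga(\cdot,t)|_{(0, v_*]}$ gives
\[
\psi(v_*, t) \;=\; \Psi(0, v_*, t) \;=\; -\lim_{c \downto 0} \th_{v_*}(c),
\]
and estimate \eqref{thvinback} of Lemma \ref{lem:windingconstraints} gives $\th_{v_*}(v_1) < 0$ throughout $0 < v_1 < v_*$, so this limit is nonpositive and $\psi(v_*, t) \geq 0$ follows.

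The conceptual point distinguishing this from Claim \ref{cl:voutfor} is that neither setting (1bi) nor (2aii) controls the winding function at the \emph{opposite} endpoint of the arc $[v_*, w_*]$, so a direct application of Lemma \ref{lem:dC} on that arc is not available, and there is no analogue of the subtle geometric refinement needed in the (1bii) case of Claim \ref{cl:voutfor}. Instead, the information must be routed through the asymptotic ends of the curve via Corollary \ref{cor:dC}, converting the global winding bounds of Lemma \ref{lem:windingconstraints} (which is where the maximality of $(v_*, w_*)$ enters) into one-sided estimates on $\psi$ at $v_*$ and $w_*$. I do not anticipate any substantial technical obstacle beyond assembling these ingredients in the right order.
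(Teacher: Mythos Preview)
Your proposal is correct and follows essentially the same route as the paper. The paper phrases the two key estimates as $\Psi(w_*,1)\geq 0$ and $\Psi(0,v_*)\geq 0$ and then uses additivity $\Psi(v_*,w_*)=\Psi(0,1)-\Psi(0,v_*)-\Psi(w_*,1)\leq\al$, which is exactly your bounds $\psi(w_*,t)\leq\al$ and $\psi(v_*,t)\geq 0$ rewritten via the normalizations $\psi(0,t)=0$, $\psi(1,t)=\al$.
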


\begin{proof}
    From (2aii) of Lemma \ref{lem:windingconstraints}, we have that \eqref{thwoutfor} holds, i.e.~$\th_{w_*}(w_2) > 0$ for all $w_* < w_2 < 1$. 
    So by (ii) of Corollary \ref{cor:dC},
    \beql{w_star}
    \Psi(w_*, 1) = \lim_{w_2 \upto 1} {\th_{w_*}(w_2)} \geq 0.
    \eeq
    By symmetry, we immediately have that if $v_*>0$ and $\Tau(v_1)$ points inward for all $v_1 < v_*$ sufficiently close, then
    \beql{v_star}
    \Psi( 0, v_*) \geq 0.
    \eeq
    If $v_* = 0,$ then by the additivity of the swept area we deduce by \eqref{w_star} that
    $$\Psi(v_*, w_*) = \Psi(0, w_*) = \Psi(0,1) - \Psi(w_*, 1) \leq \alpha.$$
    Otherwise, $v_* > 0$ and $\Tau(v_1)$ points inward for all $v_1 < v_*$ sufficiently close, so we have both \eqref{w_star} and \eqref{v_star}, and again by additivity we deduce that
    \[ \Psi(v_*, w_*) = \Psi(0, 1) - \Psi(0, v_*) - \Psi(w_*, 1) \leq \alpha . \qedhere \]
\end{proof}

To conclude the proof of Lemma \ref{lem:turnangbddred}, we explain how Claims \ref{rmk:windingfunc}, \ref{cl:voutfor} and \ref{cl:vinback} can be combined to show \eqref{turnangbddred} for all possible configurations of $\ga$.

If $v_* = 0$ and $w_* = 1$, then \eqref{turnangbddred} follows by Lemma \ref{boundary_lem}.
Otherwise, $v_* > 0$ or $w_* < 1$ (or both). 
By reflecting and reversing $\ga$, cf.~Remark \ref{symmetry_rmk}, we may assume without loss of generality that $w_* < 1$.
If the setting of (2bi) of Lemma \ref{lem:windingconstraints} holds, or if $v_* > 0$ and the setting of (1aii) holds, then \eqref{turnangbddred} follows by Claim \ref{cl:voutfor}. 
Otherwise, we are neither in setting (2bi), nor in setting (1aii) with $v_* > 0$. 
As $w_* < 1$, by Claim \ref{rmk:windingfunc} we know that the setting of (2aii) holds.
Similarly using Claim \ref{rmk:windingfunc}, either $v_* = 0$, or $v_* > 0$ and the setting of (1bi) holds.
These two remaining cases are resolved by Claim \ref{cl:vinback}. 
\emph{The proof of Lemma \ref{lem:turnangbddred} is now complete}.
\end{proof}

\section{Relating the Harnack inequalities}
\label{Harnack_relating_sect}

The purpose of this section is to relate Hamilton's Harnack inequality to the alternative Harnack inequality of Theorem \ref{main_new_harnack_thm}. We do that by considering the pointwise curvature estimate for curve shortening flow in terms of the so-called support function that each implies and finding that they agree in the case of convex flows.

We will consider the alternative Harnack inequality only in the case of polar graphical flows but we will work directly from Theorem \ref{main_new_harnack_thm} rather than from Theorem \ref{wedge_harnack_thm1} because 
it is more convenient to work with arc-length parametrisation rather than parametrisation by the polar angle $\vph$.
In this case we can take the inequality 
\beql{could_be_equality1}
\ca(v,w,t)-t\Psi(v,w,t)=\h(v,w,t)\geq 0,
\eeq
fix $v$ and take a one-sided derivative with respect to $w$ at $v$, where we have equality, to give 
$$\pl{}{w}\bigg|_{w=v}\big[\ca(v,w,t)-t\Psi(v,w,t)\big]\geq 0,$$
that is, 
\beql{could_be_equality2}
-\half \langle \ga,\nu \rangle - t\ka \geq 0 
\eeq
at each point along the curve,
where we have used \eqref{simpler_A_exp} and the fact that curvature is the rate of change of tangent angle with respect to arc-length. 
The \emph{support function} 
\beql{D_def}
\cd(u):=\langle \ga,-\nu \rangle
\eeq
can be interpreted geometrically as the shortest distance from the origin to the tangent line at $\ga(u)$, as illustrated in Figure \ref{D_def_fig}, where we use $\vph$ as the parameter $u$. 
By similarly considering the case of the $\be$-wedge solution for any $\be \in (0, \pi)$, when have equality throughout in \eqref{could_be_equality1}, 
and hence also in \eqref{could_be_equality2}, we obtain the following simple curvature estimate. 

\begin{figure}
\centering
\begin{tikzpicture}[scale=1]

\draw [->] (0,0) -- (-2.8*1,2.8*1.732);
\draw [->] (0,0) -- (-2.8*2,0);

\draw[very thick, samples=40,domain=1:58, ->] 
plot ({-cos(\x)*(\x/100+1.1+2.5/(1+0.01*(\x-30)^2))/(sin(3*\x))^(0.5)}, {sin(\x)*(\x/100+1.1+2.5/(1+0.01*(\x-30)^2)))/(sin(3*\x))^(0.5)} ) node[right]{$\ga(\cdot,t)$};


\draw[dashed, thick] (-3.6,-1) -- (-0.1,3);
\draw[dashed, thick] (-0.85,-1) -- (-0.85+3.5*0.8,-1+4*0.8);
\draw[<->, thick] (-0.45,2.6) -- node[above right, pos=0.5]{$\cd(\vph,t)$} (-0.45+1.55,2.6-0.875*1.55);

\filldraw (-2.33,0.45) circle (1.5pt) node[above left]{$\ga(\vph,t)$};

\draw[-, dotted] (-2.33,0.45) -- (0,0);

\draw[<->, thick] (-1.3,0) arc (180:162:0.8);
\draw[-] (-1.3,0.15)  node[left]{$\vph$};

\end{tikzpicture}
\caption{Illustration of $\cd(\vph,t)$.}
\label{D_def_fig}
\end{figure}

\begin{cor} 
\label{curv_cor}
Any polar graphical solution to curve shortening flow with initially radial ends 
as in Theorem \ref{wedge_harnack_thm1} must satisfy
\beql{curv_cor_est}
\ka(\vph,t)\leq \frac{1}{2t} \cd(\vph,t)
\eeq
for $t>0$. 
For any $\be \in (0, \pi)$, the  $\be$-wedge solution achieves  equality in \eqref{curv_cor_est}.
\end{cor}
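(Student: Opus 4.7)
The strategy is to extract a pointwise estimate from the inequality $\h(v,w,t) \geq 0$ by differentiating along the diagonal $\{w = v\}$, on which $\h$ vanishes identically. For a polar graphical curve shortening flow with initially radial ends, we have $\ca_- = 0$ and $\ca_+ = \cv_0$, so Theorem \ref{main_new_harnack_thm} (or equivalently the left-hand inequality of \eqref{wh1}) yields
$$ \h(v,w,t) = \ca(v,w,t) - t\,\Psi(v,w,t) \geq 0 \qquad \text{for all } v \leq w, \ t > 0. $$
Since $\h(v,v,t) = 0$, the right derivative of the smooth map $w \mapsto \h(v,w,t)$ at $w = v$ is automatically non-negative.

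To evaluate this derivative I would fix an arbitrary point on $\ga(\cdot,t)$ and, at the given time $t > 0$, choose $u$ to be arc-length near that point. Then the formulas \eqref{simpler_A_exp} and \eqref{psi_int} express $\ca(v,w,t)$ and $\Psi(v,w,t)$ as integrals from $v$ to $w$, and the fundamental theorem of calculus gives
$$ \pl{\ca}{w}\bigg|_{w=v} = -\half \langle \ga(v,t), \nu(v,t)\rangle = \half\, \cd(v,t) \qquad \text{and} \qquad \pl{\Psi}{w}\bigg|_{w=v} = \ka(v,t), $$
using the definition \eqref{D_def} of $\cd$. Substituting into the non-negativity statement gives $\half\cd(v,t) - t\,\ka(v,t) \geq 0$, which rearranges to \eqref{curv_cor_est}.

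For the sharpness claim, I would invoke the fact recorded immediately before \eqref{wasL71}: for the $\be$-wedge solution the Harnack quantity $\h$ vanishes identically. Applying the derivative argument above to this particular flow, the non-negative right derivative is in fact zero, so the inequality $\half\cd - t\ka \geq 0$ becomes an equality pointwise, yielding equality in \eqref{curv_cor_est}. No genuine obstacle is expected at any step, since the substantive work has already been carried out in Theorem \ref{main_new_harnack_thm}; the corollary is purely a matter of linearising the Harnack inequality at the diagonal.
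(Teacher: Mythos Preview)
Your proposal is correct and matches the paper's own argument essentially line for line: the paper also takes the one-sided $w$-derivative of $\h(v,w,t)\geq 0$ at $w=v$ in arc-length, invokes \eqref{simpler_A_exp} and $\psi_u=\ka$ to obtain $-\tfrac12\langle\ga,\nu\rangle - t\ka\geq 0$, and uses $\h\equiv 0$ for the $\be$-wedge solution to get the equality case.
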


This estimate is known already even in higher dimensions; see the work of Chodosh, Choi, Mantoulidis and Schulze \cite{CCMS}.
The relevance here is that it is exactly the curvature estimate independent of $\be$ that one obtains from Hamilton's Harnack estimate in the convex case.
To establish that, we return to the claim in the introduction that Hamilton's Harnack inequality \eqref{ham_harn_1d} is equivalent to the inequality 
$(\ka t^\half)_t\geq 0$, where the time derivative is with $\psi$ fixed. Although this is true for general convex uniformly proper solutions over general time intervals (that is, for convex uniformly proper solutions defined for times $t \in [0, T)$, where $T \in (0 , \infty]$, whose ends are not necessarily initially radial)
with a minor modification of the proof below, and even generalises to higher dimensions, we give a precise statement and proof in the special case that we require.

\begin{thm}[Reformulation of Hamilton's Harnack inequality]
\label{Harnack_psi}
Any convex uniformly proper  solution $\ga:\ci\times [0,\infty)\to\R^2$ to curve shortening flow with initially radial ends, 
other than the static flow of a straight line, can be viewed for $t > 0$ (after applying an ambient isometry and reparametrising at each time) as a map $\ga:(0,\pi-\be)\times (0,\infty)\to\R^2$, 
for some $\be\in (0,\pi)$,
parametrised as $\ga(\psi,t)$ where $\psi$ is the tangent angle, having geodesic curvature $\ka(\psi,t)>0$ satisfying
\beql{Harnack_ineq_psi}
(\ka t^\half)_t\geq 0,
\eeq
where the time derivative is with $\psi$ fixed.
\end{thm}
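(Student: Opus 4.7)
The plan is to derive the statement from Hamilton's Harnack \eqref{ham_harn_1d} by a direct change of variables from arc-length parametrisation to tangent-angle parametrisation; the computation will show that these two inequalities are in fact equivalent reformulations of each other.

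First I would set up the reparametrisation by $\psi$. After reparametrising as at the start of Section \ref{gen_H_pf} so that $\ga_t = \vec{\ka}$, the convexity assumption gives $\psi_u = \ka > 0$ when $u$ is arc-length, so $\psi(\cdot,t)$ is a strict diffeomorphism onto its image at each positive time. Using the continuous extension of $\psi$ to the endpoints from Lemma \ref{boundary_lem}, normalised so that $\psi \to 0$ at the end along $L_\pi$, together with an ambient rotation placing the opposite radial end at angle $\pi - \be$, we identify the image of $\psi(\cdot,t)$ with $(0, \pi - \be)$ for some $\be \in (0,\pi)$ (with $\be < \pi$ since the flow is non-static and $\be > 0$ since an embedded convex curve cannot have $\psi$ cover a full $\pi$ of angle). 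This legitimises viewing $\ka$ as a smooth function of $(\psi, t)$.

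The main computation is a chain-rule identity relating the two time derivatives of $\ka$. Fix $\tau > 0$ and, for the purpose of computing at time $\tau$, choose $u$ to be arc-length at $\tau$, so that $\psi_u = \ka$ there, and, by Lemma \ref{th_psi_lem}, $\left.\psi_t\right|_u = \lap_u \psi = \psi_{uu} = \ka_u$ at time $\tau$. Differentiating the identity $\ka(u,t) = \ka(\psi(u,t), t)$ in $t$ at fixed $u$, and using the spatial relation $\ka_u = \ka_\psi \psi_u = \ka \cdot \ka_\psi$, yields at time $\tau$
\beq\label{chainofvar_pl}
\left.\ka_t\right|_u \, = \, \left.\ka_t\right|_\psi + \ka_\psi \cdot \left.\psi_t\right|_u \, = \, \left.\ka_t\right|_\psi + \frac{\ka_u}{\ka} \cdot \ka_u \, = \, \left.\ka_t\right|_\psi + \frac{\ka_u^2}{\ka}.
\eeq
Substituting \eqref{chainofvar_pl} into Hamilton's Harnack \eqref{ham_harn_1d} makes the $\ka_u^2/\ka$ terms cancel exactly, leaving $\left.\ka_t\right|_\psi + \ka/(2t) \geq 0$, which rearranges to $\left.\partial_t(t^\half \ka)\right|_\psi \geq 0$ as desired. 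Since every step is an algebraic identity or rearrangement, the argument is in fact reversible, giving the claimed equivalence.

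The only subtle point I anticipate is justifying that Hamilton's inequality \eqref{ham_harn_1d} genuinely applies to the noncompact solutions considered here, since it was originally formulated for closed convex hypersurfaces. This can be handled either by invoking a version tailored to solutions with asymptotically radial ends (whose preservation under the flow is established in Appendix \ref{app:expdec}), or by truncating the ends, capping off by a suitable convex closed curve to apply Hamilton's inequality in its original form, and passing to the limit; the exponential decay at the ends is more than enough to control any boundary contribution in such a truncation argument.
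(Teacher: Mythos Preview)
Your proposal is correct and follows essentially the same approach as the paper: both derive \eqref{Harnack_ineq_psi} by changing variables in Hamilton's Harnack \eqref{ham_harn_1d} from arc-length to tangent-angle parametrisation. Your chain-rule identity \eqref{chainofvar_pl} is a slightly more direct packaging of the computation than the paper's, which instead substitutes the evolution equation $\ka_t = \ka_{uu} + \ka^3$ into \eqref{ham_harn_1d}, converts the spatial derivatives to $\psi$-derivatives, and then recognises the result via the Gage--Hamilton formula $\ka_t|_\psi = \ka^2 \ka_{\psi\psi} + \ka^3$; but the underlying idea and the cancellation are the same.
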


\begin{proof}
The flow $\ga$ has bounded curvature over compact time intervals as a consequence of its initial radial ends, so since $\ga(\cdot, t)$ is not a straight line, the initial convexity $\ka(\cdot, 0) \geq 0$ together with the strong maximum principle applied to
 the standard evolution equation 
\[ \ka_t=\ka_{uu}+\ka^3 \]
for the curvature (from e.g.~\cite{gage_hamilton}), where $u$ is arc-length parametrisation at the given time, ensure that $\ka(\cdot, t) > 0$ whenever $t > 0$.
Thus Hamilton's Harnack inequality \eqref{ham_harn_1d} applies to $\ga$, and can be written as
$$\ka_{uu}+\ka^3 +\frac{\ka}{2t}\geq \frac{\ka_u^2}{\ka}.$$

As $\ga(\cdot , 0)$ is convex but is not a straight line, by applying an ambient isometry we may assume that the initially radial ends are at
angles $\pi$ and $\pi-\be$ for some $\be\in (0,\pi)$.
By the smoothness and the strict convexity $0 < \ka(\cdot, t) = \psi_u(\cdot, t)$ of the flow  for $ t > 0$, we can reparametrise (by inverting the tangent angle $u \mapsto \psi(u, t)$ at each time) to view the flow as a map $\ga(\psi,t)$, 
and compute that
$$\ka_u=\ka_\psi \psi_u = \ka_\psi \ka$$
and 
$$
\ka_{uu} = (\ka_u)_\psi \psi_u = (\ka_\psi \ka)_\psi \ka
= \ka_\psi^2 \ka+\ka^2 \ka_{\psi\psi}.
$$
These equations allow us to rewrite the Harnack inequality in terms of the new parametrisation as 
$$ \ka^2 \ka_{\psi\psi} +\ka^3 +\frac{\ka}{2t}\geq 0.$$
When parametrising with respect to $\psi$, 
it was computed in \cite{gage_hamilton} that the flow satisfies the equation
$$\ka_t=\ka^2 \ka_{\psi\psi}+\ka^3,$$
so the Harnack inequality becomes 
$$\ka_t +\frac{\ka}{2t}\geq 0,$$
as required.
\end{proof}

The work of Ecker-Huisken \cite{EH1} tells us that a solution in the setting of Theorem \ref{Harnack_psi} approaches the $\beta$-wedge solution in the sense that 
\beql{gamma_blowdown}
t^{-\half}\ga(\psi,t)\to \ga_\be(\psi)
\eeq
smoothly locally in $\psi$ as $t\to\infty$, where $\ga_{\be}$ is from \eqref{gamma_beta}. Therefore by integrating the Harnack inequality \eqref{Harnack_ineq_psi} from a given time $t$ to time $\infty$, we immediately obtain the following, which can also be generalised like Theorem \ref{Harnack_psi}.

\begin{thm} 
\label{classical_harnack}
In the setting of Theorem \ref{Harnack_psi}, we have 
\beql{class_harnack_ineq}
0<\ka(\psi,t)\leq t^{-\half} \ka_\be(\psi)
\eeq
for all $\psi\in (0,\pi-\be)$ and all $t\in (0,T)$, where $\ka_\be$ is from 
\eqref{kappa_beta}.
\end{thm}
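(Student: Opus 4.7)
The plan is to derive \eqref{class_harnack_ineq} by integrating the reformulated Harnack inequality \eqref{Harnack_ineq_psi} from time $t$ out to $t = \infty$, using the Ecker--Huisken asymptotic \eqref{gamma_blowdown} to identify the limit. The positivity of $\ka$ is already given by Theorem \ref{Harnack_psi}, so only the upper bound requires work.

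First I would fix $\psi_0\in(0,\pi-\be)$ and observe that, by Theorem \ref{Harnack_psi}, the function $t\mapsto t^{\half}\ka(\psi_0,t)$ is monotonically nondecreasing on $(0,\infty)$. Hence for any $t_1\geq t$ we have
\beql{mono_ineq}
t^{\half}\ka(\psi_0,t)\leq t_1^{\half}\ka(\psi_0,t_1).
\eeq
The entire estimate will follow once we can show that $t_1^{\half}\ka(\psi_0,t_1)\to \ka_\be(\psi_0)$ as $t_1\to\infty$, after which passing to the limit in \eqref{mono_ineq} delivers the result.

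Next I would translate the Ecker--Huisken blowdown statement \eqref{gamma_blowdown}, namely $t^{-\half}\ga(\psi,t)\to \ga_\be(\psi)$ smoothly locally in $\psi$, into a statement about curvatures. Since rescaling space by $t^{-\half}$ multiplies geodesic curvature by $t^{\half}$, the curvature of the rescaled curve $t^{-\half}\ga(\cdot,t)$ at tangent angle $\psi$ is precisely $t^{\half}\ka(\psi,t)$. Smooth convergence on compact subsets of $(0,\pi-\be)$ therefore yields
\[
t^{\half}\ka(\psi,t)\longrightarrow \ka_\be(\psi) \quad \text{as } t\to\infty,
\]
uniformly on compact subsets of $\psi$. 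Evaluating at $\psi=\psi_0$ and combining with \eqref{mono_ineq} gives $t^{\half}\ka(\psi_0,t)\leq \ka_\be(\psi_0)$, which rearranges to \eqref{class_harnack_ineq}.

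The only mild subtlety is justifying that the rescaled curves are genuinely defined on a fixed $\psi$-interval containing $\psi_0$ for all sufficiently large $t$, and that the convergence is strong enough to deduce curvature convergence at a single point $\psi_0$. Both are immediate from the smooth locally uniform convergence in \eqref{gamma_blowdown}: smooth convergence of graphs parametrised by $\psi$ on a neighbourhood of $\psi_0$ forces convergence of all derivatives, and in particular of the curvature, at $\psi_0$. I do not foresee any serious obstacle beyond this bookkeeping; the substantive ingredients (the Harnack inequality \eqref{Harnack_ineq_psi} and the blowdown \eqref{gamma_blowdown}) do all the heavy lifting, and the proof is essentially a two-line monotonicity-plus-limit argument.
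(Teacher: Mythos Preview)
Your proposal is correct and follows exactly the paper's approach: the paper states the Ecker--Huisken blowdown \eqref{gamma_blowdown} and then says the theorem follows ``by integrating the Harnack inequality \eqref{Harnack_ineq_psi} from a given time $t$ to time $\infty$,'' which is precisely your monotonicity-plus-limit argument spelled out in more detail.
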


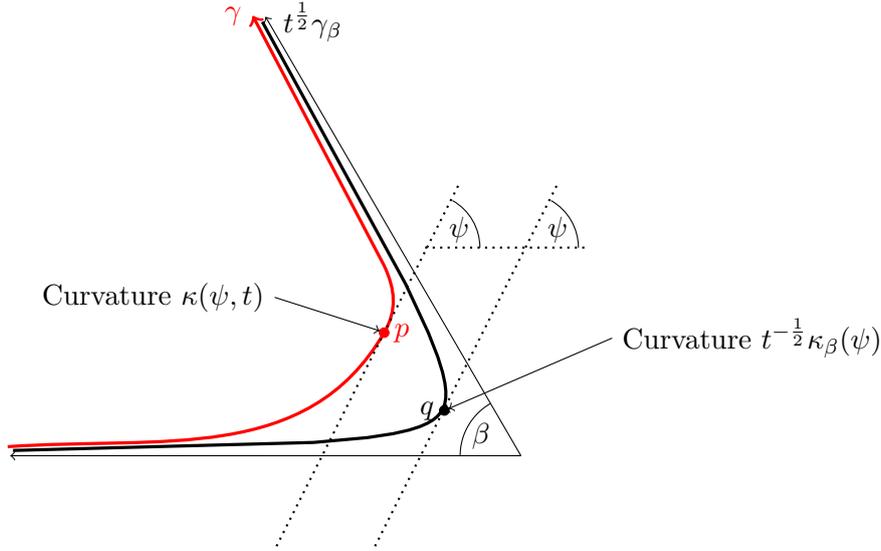
\begin{figure}
\centering
\begin{tikzpicture}[scale=1.2]
\draw [->] (0,0) -- (-2.8*1,2.8*1.732);
\draw [->] (0,0) -- (-2.8*2,0);
\draw[very thick, samples=20,domain=0.6:59.4] plot ({-cos(\x)/(sin(3*\x))^(0.5)}, {sin(\x)/(sin(3*\x))^(0.5)} ) node[right]{$\ t^\half \ga_\be$};
\draw[very thick, red, ->] (-2.8*2.01, .1) to[out=4,in=180+60] (-1.5,1.36) to[out=60, in=-65] (-1.5, 2.1) to  (-2.8*1.05, 2.8*1.7+.1) node[left]{$\ga$};
\draw[-] (-0.65,0.22) node[right]{$\be$};
\draw[samples=40,domain=0:60] plot ({-cos(\x)/1.5}, {sin(\x)/1.5}); 
\draw[thick, dotted] (-1.6,-1) -- (0.4,3);
\draw[thick, dotted] (-2.68,-1) -- (-0.68,3);
\draw[thick, dotted] (-1.05,2.3) -- (0.7,2.3);
\draw[-] (-0.45,2.3) arc (0:60:0.6);
\draw[-] (-0.9,2.5) node[right]{$\psi$};
\draw[-] (1.08-0.45,2.3) arc (0:60:0.6);
\draw[-] (1.08-0.9,2.5) node[right]{$\psi$};
\filldraw (-0.84,0.5) circle (1.5pt) node[left]{$q$};
\draw[<<-] (-0.84,0.5) -- (1,1.3) node[right]{Curvature $t^{-\half}\ka_\be(\psi)$};
\filldraw[red] (-1.5,1.36) circle (1.5pt) node[right]{$p$};
\draw[<-] (-1.55,1.38) -- (-2.7,1.75) node[left]{Curvature $\ka(\psi,t)$};
\end{tikzpicture}
\caption{Illustrating the curvature estimate \eqref{class_harnack_ineq}.}
\label{recast_Harnack}
\end{figure}

Estimate \eqref{class_harnack_ineq} is sharp because we have equality throughout when $\ga$ is the $\be$-wedge solution.
We illustrate the Harnack inequality \eqref{class_harnack_ineq} in Figure \ref{recast_Harnack}.
To develop the estimate \eqref{class_harnack_ineq} into a pointwise curvature estimate that does not involve $\beta$, 
we start by observing that 
the equality case of Corollary \ref{curv_cor}, restricted to $t=1$, gives
\beql{equality_case}
\ka_\be(\psi)\equiv  \frac{1}{2} \cd_\be(\psi),
\eeq
where $\cd_\be(\psi)$ is the 
support function of the $\be$-wedge solution.
By the comparison principle, a general solution $\ga$ as in Corollary \ref{curv_cor} will be separated from the origin by the $\be$-wedge solution $t^\half \ga_\be(\psi)$, so 
$$t^\half \cd_\be(\psi)\leq \cd(\psi,t).$$
In terms of Figure \ref{recast_Harnack}, this is just saying that the tangent line through the red point $p$ is further from the origin than the tangent line through the black point $q$. 
Thus, working from the curvature estimate \eqref{class_harnack_ineq} of Theorem \ref{classical_harnack}
we obtain 
$$\ka(\psi,t)\leq \frac{\ka_\be(\psi)}{\sqrt{t}}=  \frac{1}{2\sqrt{t}} \cd_\be(\psi) \leq  \frac{1}{2t}\cd(\psi,t),$$
which is exactly \eqref{curv_cor_est} with an alternative parametrisation. 
This connects the old and new Harnack inequalities as claimed.

\appendix
\section{Exponential decay of graphical solutions starting from half-lines}
\label{app:expdec}

In this appendix we give a simple lemma that will be used to show that curve shortening flows with initially radial ends must remain exponentially asymptotic to the ends for positive times. Since the flow will remain graphical over ends of the ends over finite time intervals, this amounts to proving estimates on solutions to \eqref{GCSF}.

\begin{lem}
\label{end_decay_lem}
Suppose that $y:[0,\infty)\times [0,T]\to\R$
is a solution to graphical curve shortening flow \eqref{GCSF} with $y(\cdot,0)\equiv 0$.
Then for all $k\in \N_0$ we have  
$$|\partial^k_x y|(x,t)\leq C(k,T) e^{-x}$$
for all $x \geq T + 3$ and all $t \in [0, T]$.
\end{lem}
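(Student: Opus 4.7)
The approach is in two stages: first I would establish the $k = 0$ case $|y(x,t)| \leq C(T) e^{-x}$ by a direct comparison-principle argument with an exponentially decaying supersolution, and then bootstrap to the $k \geq 1$ cases by applying standard interior parabolic regularity estimates on parabolic cylinders of unit scale. These cylinders fit inside $[0,\infty) \times [0,T]$ precisely because of the assumption $x \geq T + 3$.

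For the first stage, the parabolic maximum principle applied to \eqref{GCSF} with $y(\cdot,0) \equiv 0$ yields an $L^\infty$ bound $\|y\|_{L^\infty} \leq M := \sup_{t \in [0,T]} |y(0,t)|$, finite because $y$ is a classical solution (and bounded in the applications by the uniform properness of the underlying curve shortening flow). I would then compare $y$ to the explicit barrier
\[
\phi(x,t) := M \, e^{\, t - x}.
\]
A direct calculation gives $\phi_t = \phi_{xx} = \phi$ and $\phi_x = -\phi$, so
\[
\phi_t - \frac{\phi_{xx}}{1+\phi_x^2} \;=\; \phi - \frac{\phi}{1+\phi^2} \;=\; \frac{\phi^3}{1+\phi^2} \;\geq\; 0,
\]
showing $\phi$ is a classical supersolution of \eqref{GCSF}. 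Moreover $\phi(x,0) = M e^{-x} \geq 0 = y(x,0)$ and $\phi(0,t) = M e^t \geq M \geq y(0,t)$, so the parabolic comparison principle (applied to the linearisation of the quasilinear difference $y - \phi$) gives $y \leq \phi$; the same reasoning with $-y$ and $-\phi$ then yields $|y(x,t)| \leq M e^{T-x}$, which settles the case $k = 0$ with $C(0,T) = M e^T$.

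For the second stage, fix $(x_0, t_0)$ with $x_0 \geq T + 3$ and $t_0 \in [0,T]$, and consider the parabolic cylinder $Q := [x_0 - 1, x_0 + 1] \times [\max\{0, t_0 - 1\}, t_0] \subset [T+2, \infty) \times [0,T]$. On $Q$ the $k = 0$ bound gives $|y| \leq M e^{T+1} e^{-x_0}$. Provided we also have a uniform Lipschitz bound $|y_x| \leq L$, the equation \eqref{GCSF} is uniformly parabolic with smooth coefficients on $Q$, and standard interior Schauder estimates yield
\[
\sum_{j=0}^{k} \|\partial_x^j y\|_{L^\infty(Q_{1/2})} \;\leq\; C(k, L)\, \|y\|_{L^\infty(Q)} \;\leq\; C(k, T, L)\, e^{-x_0},
\]
where $Q_{1/2}$ is the concentric subcylinder of half the radius. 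Evaluating at $(x_0, t_0)$ delivers the claimed bound.

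The main obstacle is therefore obtaining the uniform gradient bound on $y_x$ that feeds into the Schauder step, since \eqref{GCSF} is genuinely quasilinear. In the paper's setting this is automatic from the bounded curvature of the ambient smooth CSF; a self-contained derivation proceeds from the observation that $p := \arctan y_x$ satisfies the linear parabolic equation $p_t = \cos^2 p \cdot p_{xx}$, to which the scalar maximum principle applies. Since $p(\cdot,0) \equiv 0$, this bounds $\|p\|_{L^\infty}$ by $\|p(0,\cdot)\|_{L^\infty}$, which is strictly less than $\pi/2$ provided $y_x(0,\cdot)$ is finite, giving in turn a uniform bound on $|y_x| = |\tan p|$ and thereby closing the argument.
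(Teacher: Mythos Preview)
Your two-stage strategy matches the paper's, but the tools at each stage differ. For the $C^0$ bound the paper compares the graph of $y$ geometrically with the family of \emph{compact} Angenent ovals and their limit, the grim reaper $\sin\bar y=e^{t-x}$, whereas you build the explicit supersolution $\phi=Me^{t-x}$ directly; for the gradient bound the paper quotes the local Ecker--Huisken interior estimate, while you apply the maximum principle to the scalar equation $p_t=\cos^2 p\,p_{xx}$ for $p=\arctan y_x$. Both routes then finish with the same parabolic-regularity bootstrap. Your approach is more self-contained (it avoids citing Ecker--Huisken and the Angenent ovals), but it repeatedly invokes the comparison/maximum principle on the unbounded half-line, which you do not justify: to conclude $y\leq\phi$ you must rule out the difference attaining its supremum at spatial infinity, where $\phi\to 0$ but $y$ is only known to be bounded by $M$. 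A routine Phragm\'en--Lindel\"of tweak (replace $\phi$ by $\phi+\epsilon(1+x)$, which is still a supersolution, and send $\epsilon\downarrow 0$) closes this gap, and the same device handles the $p$-equation. The paper's use of compact Angenent ovals sidesteps the issue entirely, since the avoidance principle between a compact CSF solution and a proper noncompact one needs no such care. One further minor point: when $t_0<1$ your cylinder $Q$ touches $\{t=0\}$, so the Schauder step is not purely interior and needs the initial-boundary version of the estimate, using $y(\cdot,0)\equiv 0$; the paper makes this explicit.
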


\begin{proof}
We can compare the curve shortening flow given by $y$ to so-called Angenent ovals
sitting just above the $x$-axis. More precisely,  the $t$-dependent embedded closed curves consisting of points $(x,\ti y)$ satisfying
\beql{angoval2a} \sin \tilde y = 2 e^{t-a}\cosh (x-a), \quad 0 < \tilde y < \pi 
\eeq
as pictured in Figure \ref{angoval}, 
describe a curve shortening flow.
Note that  
$$1\geq \sin \tilde y =
2 e^{t-a}\cosh (x-a)\geq e^{t-a}e^{a-x}=e^{t-x}$$ 
so the oval lives within the set $x\geq t$.
As $a\to\infty$, the oval approaches the grim reaper, which satisfies
$$\sin \bar y = e^{t-x}.$$
The comparison principle forces the graph of $y$ to lie below the evolving ovals, and by taking $a\to\infty$, to therefore lie below the grim reaper. 
Thus because $ \frac{2}{\pi} \bar y  \leq \sin \bar y$ for $0 \leq \bar y \leq \frac\pi2$, we have
$$y\leq \bar y \leq \frac{\pi}{2} \sin \bar y = \frac{\pi}{2} e^{t-x}$$
for all $x\geq t$. 
By repeating the argument with an Angenent oval \emph{below} the $x$-axis we obtain the corresponding lower bound $y\geq -\frac\pi2 e^{t-x} $ also.
In particular, we have 
\beql{y_bd_both_ways}
|y|(x,t)\leq \frac{\pi}{2} e^{T-x} = C(T) e^{-x} 
\eeq 
for all $x\geq T$ and all $t\in [0,T]$.

\begin{figure}
\centering
\begin{tikzpicture}[scale = 1.2]
%
\draw[->] (0,0) -- (10.5,0) node[right]{$x$};
\draw[->] (0,0) -- (0,pi+0.5) node[above]{$\tilde y(x)$};
\draw[dashed] (0, pi) node[left]{$\pi$} -- (10.5 ,pi);
\draw[dashed] (0, pi/2) node[left]{$\displaystyle\frac{\pi}{2}$} -- (10.5 ,pi/2);
\draw[dashed] (10,0) node[below]{$2a$} -- (10,pi + .5);
\draw[dashed] (5, 0) node[below]{$a$} -- (5, pi + .5);
\draw[domain=.5:9.5, samples=50] plot  (\x, {(pi/180)*asin(2*exp(-5)*cosh(\x-5))}) ;
\draw[domain=.5:9.5, samples=50] plot  (\x, {pi - (pi/180)*asin(2*exp(-5)*cosh(\x-5))}) ;
\draw[domain=pi/10:(9/10)*pi, samples=50] plot ( { 5 - ln(   0.5*exp(5)*sin(\x*(180/pi)) +   sqrt(  ( 0.5*exp(5)*sin(\x*(180/pi)) )^2 - 1  )   )   }  , \x) ;
\draw[domain=pi/10:(9/10)*pi, samples=50] plot ( { 5 + ln(   0.5*exp(5)*sin(\x*(180/pi)) +   sqrt(  ( 0.5*exp(5)*sin(\x*(180/pi)) )^2 - 1  )   )   }  , \x) ;
\draw[very thick, domain=2.5:7.5, samples=50] plot  (\x, {(pi/180)*asin(2*exp(-3)*cosh(\x-5))}) ;
\draw[very thick, domain=2.5:7.5, samples=50] plot  (\x, {pi - (pi/180)*asin(2*exp(-3)*cosh(\x-5))}) ;
\draw[very thick, domain=pi/10:(9/10)*pi, samples=50] plot ( { 5 - ln(   0.5*exp(3)*sin(\x*(180/pi)) +   sqrt(  ( 0.5*exp(3)*sin(\x*(180/pi)) )^2 - 1  )   )   }  , \x) ;
\draw[very thick, domain=pi/10:(9/10)*pi, samples=50] plot ( { 5 + ln(   0.5*exp(3)*sin(\x*(180/pi)) +   sqrt(  ( 0.5*exp(3)*sin(\x*(180/pi)) )^2 - 1  )   )   }  , \x) ;
\draw[domain=4.3:5.7, samples=50] plot  (\x, {(pi/180)*asin(2*exp(-1)*cosh(\x-5))}) ;
\draw[domain=4.3:5.7, samples=50] plot  (\x, {pi - (pi/180)*asin(2*exp(-1)*cosh(\x-5))}) ;
\draw[domain=pi/3:(2/3)*pi, samples=50] plot ( { 5 - ln(   0.5*exp(1)*sin(\x*(180/pi)) +   sqrt(  ( 0.5*exp(1)*sin(\x*(180/pi)) )^2 - 1  )   )   }  , \x) ;
\draw[domain=pi/3:(2/3)*pi, samples=50] plot ( { 5 + ln(   0.5*exp(1)*sin(\x*(180/pi)) +   sqrt(  ( 0.5*exp(1)*sin(\x*(180/pi)) )^2 - 1  )   )   }  , \x) ;
%
%
\end{tikzpicture}
\caption{Plot of the evolving Angenent oval \eqref{angoval2a}.}
\label{angoval}
\end{figure}
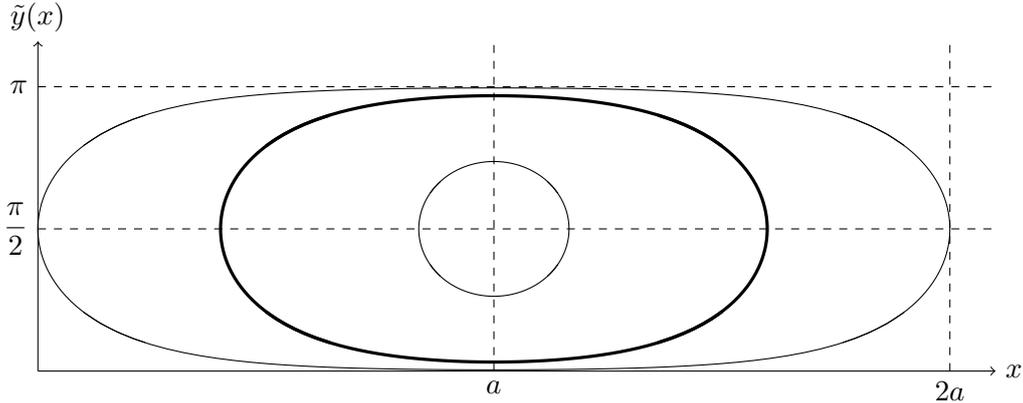

Next we require uniform gradient estimates on $y$.
We can apply the interior gradient estimates of 
Ecker-Huisken \cite[Theorem 2.3]{EH2} over $[x_0 - 1, x_0+1] \times [0, T]$, for any $x_0 \geq T + 1$ to establish that because $y(\cdot,0)\equiv 0$, there exists universal  $c_0 = c_0(T) <\infty$ such that 
 $|y_x|(x_0, t)  \leq c_0$ for all $x_0 \geq T + 1$ and $t \in [0, T]$.

For any $x_0\geq T+3$, by combining the initial condition
$y(\cdot,0)\equiv 0$ and the gradient bound $|y_x| \leq c_0$
we can apply parabolic regularity theory to the 
equation 
\beq
z_t=(\arctan z)_{xx},
\eeq
for $z:=y_x$ (cf.~\cite{ST1}) over the parabolic cylinder $[x_0-2,x_0+2]\times [0,T]$ to obtain
uniform bounds on all derivatives of $z$ over the
interior region  $[x_0-1,x_0+1]\times [0,T]$, depending only on $T$ and the order of the derivative being controlled.

Such control then allows us to view the curve shortening flow equation \eqref{GCSF} for $y$ as the linear equation 
$y_t=ay_{xx}$ with coefficient $a:=\frac{1}{1+z^2}$ controlled in every $C^k$ norm. In particular we can estimate 
$$|\partial^k_x y|(x_0,t)\leq C(k,T)\|y\|_{C^0([x_0-1,x_0+1]\times [0,T])}.$$
Recalling the bound \eqref{y_bd_both_ways}, this implies 
$$|\partial^k_x y|(x_0,t)\leq C(k,T)e^{-(x_0-1)} = C(k, T) e^{-x_0}$$
for all $ x_0 \geq T + 3$ and all $t\in [0,T]$.
\end{proof}

\vskip 0.2cm

\noindent
AS: 
\url{https://sites.google.com/view/sobnack}


\noindent
{\sc Department of Mathematics, The University of Texas at Austin, TX 78712, USA.}

%
%
%

\noindent
PT: 
\url{http://warwick.ac.uk/fac/sci/maths/people/staff/peter\_topping/} 

\noindent
{\sc Mathematics Institute, University of Warwick, Coventry,
CV4 7AL, UK.}


\begin{thebibliography}{99}

\bibitem{andrews_mathz} B. Andrews,
\emph{Harnack inequalities for evolving hypersurfaces.}
Math. Z. {\bf 217} (1994) 179--197.



\bibitem{angenent1991parabolic}
S. Angenent,
\emph{Parabolic equations for curves on surfaces: {Part II}.
  {Intersections}, blow-up and generalized solutions}, Annals of Mathematics
  \textbf{133} (1991) 
  171--215.
 

\bibitem{CCMS} O. Chodosh, K. Choi, C. Mantoulidis and F. Schulze,
\textit{Mean curvature flow with generic initial data.} 
Invent. math. {\bf 237} (2024) 121--220.


\bibitem{CZ} K.-S. Chou and X.-P. Zhu,
\textit{Shortening complete plane curves.}
J. Differential Geometry {\bf 50} (1998) 471--504.

\bibitem{dC} {M.~P.~do~Carmo}, `Differential geometry of curves and surfaces.' (Second edition.) Dova Publications (2016).




\bibitem{DN_L} E. Di Nezza and C. H. Lu,
\textit{Uniqueness and short time regularity of the weak K\"ahler-Ricci flow,} 
Adv. Math. {\bf 305} (2017), 953--993.

\bibitem{ecker_book} K. Ecker,
`Regularity Theory for Mean Curvature Flow.'
Progress in Nonlinear Differential Equations and Their Applications, {\bf 57}. Birkhäuser Boston, MA (2003).

\bibitem{EH1} K. Ecker and G. Huisken, 
\emph{Mean curvature evolution of entire graphs.}
Annals of Math. {\bf 130} (1989) 453--471.

\bibitem{EH2} K. Ecker and G. Huisken, 
\textit{Interior estimates for hypersurfaces moving by mean curvature.}
Invent. Math. {\bf 105} (1991) 547--569. 


\bibitem{ES3} L. C. Evans and J. Spruck,
\textit{Motion of level sets by mean curvature III.}
J. Geom. Anal. {\bf 2} (1992) 121--150. 

\bibitem{gage_hamilton} M. Gage and R. S. Hamilton,
\textit{The heat equation shrinking convex plane curves.}
J. Differential Geom. (1986) 69--96. 


\bibitem{grayson} M. A. Grayson, \textit{The heat equation shrinks embedded plane curves to round points.} J. Differential Geom. {\bf 26} (1987) 285--314.



\bibitem{Ham_RF_Harnack} R. S. Hamilton,
\textit{The Harnack estimate for the Ricci flow.}
J. Differential Geometry, {\bf 37} (1993) 225--243.

\bibitem{Ham_MCF_Harnack} R. S. Hamilton,
\textit{Harnack estimate for the mean curvature flow.}
J. Differential Geometry, {\bf 41} (1995) 215--226.

\bibitem{huisken_AJM} G. Huisken, 
\textit{A distance comparison principle for evolving curves.}
Asian J. Math., {\bf 2} (1998) 127-134.





\bibitem{neves} A. Neves, 
\emph{Finite time singularities for Lagrangian mean curvature flow.} Annals of Math. {\bf 177} (2013) 1029--1076.

\bibitem{peachey_thesis} L. T. Peachey, `Geometric flows without boundary data at infinity.'
PhD thesis, University of Warwick (2022).
\url{https://wrap.warwick.ac.uk/id/eprint/179198/}


\bibitem{Polden} A. Polden, \textit{Evolving curves.} Honours thesis, Australian National University (1991).

\bibitem{arjun_thesis} A. Sobnack, `Geometric regularity properties of the Curve Shortening Flow.' PhD thesis, University of Warwick (2024).
\url{https://wrap.warwick.ac.uk/id/eprint/192063/}

\bibitem{ST1} A. Sobnack and P. M. Topping,
\textit{Delayed parabolic regularity for curve shortening flow.}
\href{https://arxiv.org/abs/2408.04049}{arXiv:2408.04049}


\bibitem{TY1} {P. M. Topping} and {H. Yin}, \emph{Sharp Decay Estimates for the Logarithmic Fast Diffusion Equation and the Ricci Flow on Surfaces.\/} 
Annals of PDE {\bf 3} (2017). 
\url{https://doi.org/10.1007/s40818-017-0024-x}


\end{thebibliography}
\end{document}